\def\R {\mathbb{R}}
\def\C {\mathcal{C}}
\newtheorem{proposition}{Proposition}[section]
\newtheorem{theorem}[proposition]{Theorem}
\newtheorem{corollary}[proposition]{Corollary}
\newtheorem{lemma}[proposition]{Lemma}
\theoremstyle{definition}
\newtheorem{definition}[proposition]{Definition}
\newtheorem{remark}[proposition]{Remark}
\numberwithin{equation}{section}
\newtheorem{exAPP}{Example}[section]
\begin{document}

\title[ nonlinear  multi-term fractional differential equations]
{Initial-boundary value  problems to semilinear\\ multi-term fractional differential equations}

\author[S.V. Siryk and N. Vasylyeva]
{Sergii V. Siryk and Nataliya Vasylyeva}
\address{CONCEPT Lab, Istituto Italiano di Tecnologia
\newline\indent
Via Morego 30, 16163, Genova, Italy}
\email[S. Siryk]{accandar@gmail.com}

\address{Institute of Applied Mathematics and Mechanics of NASU
\newline\indent
G.Batyuka st. 19, 84100 Sloviansk, Ukraine;
\newline\indent and
Dipartimento di Matematica, Politecnico di Milano
\newline\indent
Via Bonardi 9, 20133 Milano, Italy
}
\email[N.Vasylyeva]{nataliy\underline{\ }v@yahoo.com}

\subjclass[2000]{Primary 35R11, 35B45, 35B65; Secondary 35Q92, 26A33, 65M22}
\keywords{a priori estimates, Caputo derivatives, nonlinear oxygen subdiffusion, global solvability, numerical solutions}

%%%%%%%%%%%%%%%%%%%%%%%%%%%%%%%%%%%%%%%%%%%%%%%%%%%%%%%%%%%%%%%%%%%%%%
\begin{abstract}
For $\nu,\nu_i,\mu_j\in(0,1)$, we analyze the semilinear integro-differential equation on the one-dimensional domain $\Omega=(a,b)$ in the unknown $u=u(x,t)$
\[
\mathbf{D}_{t}^{\nu}(\varrho_{0}u)+\sum_{i=1}^{M}\mathbf{D}_{t}^{\nu_{i}}(\varrho_{i}u)
-\sum_{j=1}^{N}\mathbf{D}_{t}^{\mu_{j}}(\gamma_{j}u)
-\mathcal{L}_{1}u-\mathcal{K}*\mathcal{L}_{2}u+f(u)=g(x,t),
\]
where $\mathbf{D}_{t}^{\nu},\mathbf{D}_{t}^{\nu_{i}}, \mathbf{D}_{t}^{\mu_{j}}$ are Caputo fractional derivatives, $\varrho_0=\varrho_0(t)>0,$ $\varrho_{i}=\varrho_{i}(t)$, $\gamma_{j}=\gamma_{j}(t)$, $\mathcal{L}_{k}$ are uniform elliptic operators with time-dependent smooth coefficients, $\mathcal{K}$ is a summable convolution kernel. Particular cases of this equation are the recently proposed advanced models of oxygen transport through capillaries. Under certain structural conditions on the nonlinearity $f$ and  orders $\nu,\nu_i,\mu_j$, the global existence and uniqueness of classical and strong solutions to the related initial-boundary value problems are established via the so-called continuation arguments method. The crucial point is searching suitable a priori estimates of the solution in the fractional H\"{o}lder  and Sobolev spaces. The problems are also studied from the numerical point of view.
\end{abstract}
%%%%%%%%%%%%%%%%%%%%%%%%%%%%%%%%%%%%%%%%%%%%%%%%%%%%%%%%%%%%%%%
\maketitle
%%%%%%%%%%%%%%%%
\section{Introduction}
\label{s1}

 \noindent Let $\Omega=(a,b)\subset\R$ be a segment, with a
boundary $\partial\Omega=\{a\}\cup\{b\}$. For an arbitrary fixed
time $T>0$, we denote
$$
\Omega_{T}=\Omega\times (0,T)\qquad \text{and}\qquad
\partial\Omega_{T}=\partial\Omega\times [0,T].
$$
We consider the semilinear equation in the unknown function $u=u(x,t):\Omega_{T}\to\R$,
\begin{equation}\label{1.1}
\mathbf{D}_{t}u-\mathcal{L}_{1}u-\mathcal{K}*\mathcal{L}_{2}u+f(u)=g(x,t),
\end{equation}
subject either to the Dirichlet boundary condition (\textbf{DBC})
\begin{equation}\label{1.2}
u=\psi(x,t)\quad\text{on}\quad \partial\Omega_{T},
\end{equation}
or to the Neumann boundary condition (\textbf{NBC})
\begin{equation}\label{1.2*}
\frac{\partial u}{\partial x}=\psi_{1}(x,t)\qquad\text{on} \quad \partial\Omega_{T},
\end{equation}
where the functions $g,f,\psi,\psi_{1},\mathcal{K}$ are prescribed.

\noindent The equation is  supplemented with the initial condition
\begin{equation}\label{1.3}
u(x,0)=u_{0}\quad \text{in}\quad \bar{\Omega}
\end{equation}
for some given initial datum $u_0$.

\noindent Here the $*$ denotes the usual time-convolution product on $(0,t)$, namely
\[
(\mathfrak{h}_{1}*\mathfrak{h}_{2})(t)=\int\limits_{0}^{t} \mathfrak{h}_{1}(t-s)\mathfrak{h}_{2}(s)ds,
\]
while the symbol $\mathbf{D}_{t}$ stands for the linear combinations of  Caputo fractional derivatives with respect to time, defined as
\begin{equation}\label{1.4}
\mathbf{D}_{t}u=\mathbf{D}_{t}^{\nu}(\varrho_0 u)+\sum_{i=1}^{M}\mathbf{D}_{t}^{\nu_{i}}(\varrho_i u)-
\sum_{j=1}^{N}\mathbf{D}_{t}^{\mu_{j}}(\gamma_j u)
\end{equation}
for any fixed $\nu\in(0,1)$ $\nu_{i}, \mu_i\in(0,\nu)$, and given positive functions $\varrho_{0}=\varrho_{0}(t),$ $\varrho_{i}=\varrho_{i}(t)$, $\gamma_{j}=\gamma_{j}(t),$ $i=1,...,M,$ and  $j=1,...,N$. We agreed that if $N=0$ or $M=0$ then the corresponding sum is missing from the above representation.
 Here $\mathbf{D}_{t}^{\theta}$ denotes  the Caputo fractional derivative of order $\theta$ with respect to time.
Let us recall the definition of the Caputo fractional derivative in the case of $\theta\in(0,1]$,
\[
\mathbf{D}_{t}^{\theta}u(x,t)=
\begin{cases}
\frac{1}{\Gamma(1-\theta)}\frac{\partial}{\partial t}\int\limits_{0}^{t}\frac{u(x,s)-u(x,0)}{(t-s)^{\theta}}ds\quad\text{if}\quad \theta\in(0,1),\\
\frac{\partial u}{\partial t}(x,t)\qquad\qquad\qquad\qquad\,\text{ if}\quad \theta=1,
\end{cases}
\]
with $\Gamma$ being the Euler Gamma-function. An equivalent definition of this derivative  in the case of absolutely continuous functions reads
\[
\mathbf{D}_{t}^{\theta}u(x,t)=
\begin{cases}
\frac{1}{\Gamma(1-\theta)}\int\limits_{0}^{t}(t-s)^{-\theta}\frac{\partial u}{\partial s}(x,s)ds
\quad\text{if}\quad \theta\in(0,1),\\
\frac{\partial u}{\partial t}(x,t)\qquad\qquad\qquad\qquad\,\text{ if}\quad \theta=1.
\end{cases}
\]
Coming to the operators involved, $\mathcal{L}_{i}$ are linear elliptic operators  of the second order with time-dependent coefficients, namely,
\begin{align*}
\mathcal{L}_{1}u&=a_{2}\frac{\partial^{2}u}{\partial x^{2}}+a_{1}\frac{\partial u}{\partial x}+a_{0} u,\\
\mathcal{L}_{2}u&=b_{2}\frac{\partial^{2}u}{\partial x^{2}}+b_{1}\frac{\partial u}{\partial x}+b_{0} u,\\
\end{align*} 
where 
$
a_i=a_i(x,t),$ $b_i=b_i(x,t).
$

The evolution equations with fractional  derivatives play an important role in the modleing of the so-called anomalous phenomena arising in Biology, Geophysics, Chemistry and Physics (see e.g. \cite{BG,BT,CDV,LT,MS,MMVBBDK} and references therein). It occurs that for certain processes the order of the time-fractional derivatives from the corresponding model equation does not remain constant. A possible method to control these phenomena is to exploit the multi-term time-fractional diffusion-wave equation, see e.g. \cite{L1}. In particular, partial case of equation \eqref{1.1} ($M=0$, $N=1$, $\varrho_0=1$ and $\gamma_1=\text{constant}$) describes oxygen delivery through capillaries \cite{MGSKA,SR}.

Published works related to the multi-term  fractional diffusion/wave equations, i.e. equations similar to \eqref{1.1} with the operator 
\begin{equation}\label{1.6}
\mathbf{D}_{t}u=\sum_{i=1}^{N}q_i\mathbf{D}_{t}^{\nu_i}u,
\end{equation}
with $q_i$ being positive, and $0\leq \nu_1<\nu_2<...<\nu_{M}$, are quite limited in spite of rich literature on their single-term version.
Exact solution of linear multi-term fractional diffusion equations with $q_i$ being positive constants on bounded domains are constructed employing eigenfunction expansion in \cite{DB,CLA,JLTB,MGSKA,ZLLA}. We quote \cite{SR,MGSKA,JDM,ZLLA} where certain numerical solutions are built to the corresponding initial-boundary value problems to evolution equations with $\mathbf{D}_{t}$ given via \eqref{1.6}. Abstract multi-term time-fractional  equations in Banach spaces are discussed in \cite{LMMZL}. Well-posedness along with a maximum principle and the long-time asymptotic behavior of the solution for the initial-boundary value problems to the these equation are studied in \cite{KPV1,LLY,LY,LSY} (see also references therein). In fine, we refer to \cite{LHY}, where initial-boundary value problems to this equation with the $x$-dependent coefficients $q_i$ are analyzed.

The principal distinction of equation \eqref{1.1} from the equations in the aforementioned  works is related to the representation of the  operator $\mathbf{D}_{t}$ (see \eqref{1.4}) as a linear combination of the multi-term fractional derivatives. Therefore, for  certain $\rho_0$ and $\gamma_{i}$, $\mathbf{D}_{t}u$  can be rewritten in the form 
\[
\frac{\partial}{\partial t}\int_{0}^{t}\mathcal{N}(t-\tau)[u(x,\tau)-u(x,0)]d\tau
\]
 with the kernel $\mathcal{N}$ being either a negative function or a function alternating in sign. 
Indeed, choosing $M=0$, $N=1$, and 
$$\gamma_{1}=1+\varrho_{0},\quad \varrho_{0}\equiv C_{\varrho}>0,$$
 and appealing to  \cite[Lemma 4]{JK}, we end up with the equality
\[
\mathbf{D}_{t}^{\nu}(\varrho_{0}u)-\mathbf{D}_{t}^{\nu_{1}}(\gamma_1 u)=\frac{\partial}{\partial t}[\mathcal{N}* (u-u(x,0))],
\]
where the kernel
\[
\mathcal{N}=C_{\varrho}\frac{t^{-\nu}}{\Gamma(1-\nu)}-(1+C_{\varrho})\frac{t^{-\mu_1}}{\Gamma(1-\mu_1)},
\]
 is \textit{negative } for $t>e^{-\gamma}$, $\gamma$ being the Euler-Mascheroni constant. It is worth noting that the nonnegativity of the kernel $\mathcal{N}$ plays a crucial role in the previous investigations of fractional partial differential equations and related initial/initial-boundary value problems. This assumption is removed in our research.
Moreover, equation \eqref{1.1} contains fractional derivatives calculated from the product of two functions: the desired solution $u$ and the prescribed coefficients $\varrho_{i}$, $\varrho_0$, $\gamma_{i}$. This peculiarity provides additional difficulties  since the typical Leibniz rule does not work in the case of fractional derivatives.

The first result on the global classical solvability to the linear version \eqref{1.1} with the operator $\mathbf{D}_{t}$ given by \eqref{1.4}, where the coefficients $\varrho_{i}=\varrho_i(x,t),$ $\gamma_{i}=\gamma_{i}(x,t)$ are alternating sign, is discussed in \cite{PSV6}. To the authors' best knowledge, there are no works in the literature addressing to one-valued global solvability to the quasilinear equation \eqref{1.1} in the case of $N\geq 1$ and positive $\gamma_{i}$. The aim of the present paper is to fill this gap, providing a well-posedness result along with the regularity of solutions in fractional H\"{o}lder and Sobolev classes for any fixed time $T$, in the case of ``power law'' memory kernel \cite{Pi}, i.e. satisfying for every $t\in[0,T]$ the bound
\[
|\mathcal{K}(t)|\leq C t^{-\beta}
\]
for some positive constant $C$ and $\beta\in[0,1)$. Indeed, boundary problems with kernels of this kind do have a practical interest: many viscoelastic materials have rapidly decreasing memory in small time values, and are therefore better described by kernels with singularities at the origin.

The technique of this paper heavily rely on the fact that we work in a one-dimensional domain. Bedsides, the main tools in our analysis are a priori estimates in fractional Sobolev and H\"{o}lder spaces. Our analysis is complemented by numerical simulations.

%%%%%%%%%%%%%%%%%%%%%%%%%%%%%%%%%%%%%%%%%%%%%%%%%%%%%%%%%%%%%%%%%%%%%%

\subsection*{Outline of the paper}

The paper is organized as follows: in Section \ref{s2}, we introduce the notation and the functional settings. The main assumptions are discussed in Section \ref{s3}. The principal results (Theorems \ref{t4.1} and \ref{t4.2}) are stated in Section \ref{s4}. Theorem \ref{t4.1} concerns to the classical global solvability of \eqref{1.1}-\eqref{1.4}, while Theorem \ref{t4.2} touches the existence and uniqueness of strong solutions to these problems. In Section \ref{s5}, we recall some definitions together with some auxiliary technical results from  fractional calculus, playing a key role in the course of this study. Section \ref{s6} is devoted to obtain a priori estimates in the fractional Sobolev and H\"{o}lder spaces. The proof of Theorem \ref{t4.1} is carried out in Section \ref{s7}. To this end, we exploit the continuation method, treating the family of problems depending on the parameter $\lambda\in[0,1]$,
\[
\mathbf{D}_{t}u-\mathcal{L}_1u-\mathcal{K}
*\mathcal{L}_{2}u=\lambda[f(u_0)-f(u)]+g(x,t)-f(u_0),
\] 
subject to the conditions \eqref{1.2}-\eqref{1.3}. At last, in Section \ref{s8}, we prove Theorem \ref{t4.2} via construction of a strong solution as a limit of approximate smooth solutions. In the final Section \ref{s9}, we study \eqref{1.1}-\eqref{1.4} from the numerical side.

%%%%%%%%%%%%%%%%%%%%%%%%%%%%%%%%%%%%%%%%%%%%%%%%%%%%%%%%%%%%%%%%%%%%%%
 
%%%%%%%%%%%%%%%%%%%%%%%%%%%%%%%%%%%%%%%%%%%%%%%%%%%%%%%%%%%%%%%%%%%%%%

\section{Functional Spaces and Notation}
\label{s2}
\noindent Throughout this work, the symbol $C$ will denote a generic positive constant, depending only on the structural quantities of the problem. We will carry out our analysis in the framework of the fractional H\"{o}lder spaces. To this end, in what follows we take two arbitrary (but fixed) parameters
\[
\alpha\in(0,1) \quad\text{and}\quad \theta\in(0,1). 
\] 
For any nonnegative integer $l$, any Banach space $(\mathbf{X},\|\cdot\|_{\mathbf{X}}),$ and any $p\geq 1,$ $s\geq 0$, we consider the usual spaces
\[
\C^{s}([0,T],\mathbf{X}),\quad\C^{l+\alpha}(\bar{\Omega}),\quad W^{s,p}(\Omega),\quad L_{p}(\Omega),\quad W^{s,p}((0,T),\mathbf{X}).
\]
Recall that for noninteger $s$,  $W^{s,p}$ is called the Sobolev-Slobodeckii space (for its definition and properties see, e.g.,   in \cite[Chapter 1]{AF}, \cite[Chapter 1]{Gr}).

Denoting for $\beta\in(0,1)$
\begin{align*}
\langle v\rangle^{(\beta)}_{x,\Omega_T}&=\sup\Big\{\frac{|v(x_1,t)-v(x_2,t)|}{|x_1-x_2|^{\beta}}:\quad x_{2}\neq x_{1},\quad x_1,x_2\in\bar{\Omega}, \quad t\in[0,T]\Big\},\\
\langle v\rangle^{(\beta)}_{t,\Omega_T}&=\sup\Big\{\frac{|v(x,t_1)-v(x,t_2)|}{|t_1-t_2|^{\beta}}:\quad t_{2}\neq t_{1},\quad x\in\bar{\Omega}, \quad t_1,t_2\in[0,T]\Big\},
\end{align*}
 we assert the following definition.
\begin{definition}\label{d2.1}
 A function $v=v(x,t)$ belongs to the class $\C^{l+\alpha,\frac{l+\alpha}{2}\theta}(\bar{\Omega}_{T})$, for $l=0,1,2,$ if the function $v$ and its corresponding derivatives are continuous and the norms here below are finite:
\begin{equation*}
\|v\|_{\C^{l+\alpha,\frac{l+\alpha}{2}\theta}(\bar{\Omega}_{T})}=
\begin{cases}
\|v\|_{\C([0,T],
\C^{l+\alpha}(\bar{\Omega}))}+\sum_{|j|=0}^{l}\langle
D_{x}^{j}v\rangle^{(\frac{l+\alpha-|j|}{2}\theta)}_{t,{\Omega}_{T}},
\qquad\qquad\qquad\qquad\quad\,
l=0,1,\\
\\
\|v\|_{\C([0,T],
\C^{2+\alpha}(\bar{\Omega}))}+\|\mathbf{D}_{t}^{\theta}v\|_{\C^{\alpha,\frac{\alpha}{2}\theta}
(\bar{\Omega}_{T})}+\sum_{|j|=1}^{2}\langle
D_{x}^{j}v\rangle^{(\frac{2+\alpha-|j|}{2}\theta)}_{t,{\Omega}_{T}},\quad l=2.
\end{cases}
\end{equation*}
\end{definition}
\noindent In a similar way, for $l=0,1,2,$ we introduce the space $\C^{l+\alpha,\frac{l+\alpha}{2}\theta}(\partial\Omega_{T})$. 

The properties of these spaces have been discussed in \cite[Section 2]{KPV3}. It is worth noting that, in the limiting case $\theta=1$, the class  $\C^{l+\alpha,\frac{l+\alpha}{2}\theta}$ coincides with the usual parabolic H\"{o}lder space $H^{l+\alpha,\frac{l+\alpha}{2}}$ (see e.g.\cite[(1.10)-(1.12)]{LSU}).

Finally, exploiting \cite[Proposition 1]{Y2}, we introduce the space $\mathcal{H}^{s}((0,T),\mathbf{X})$ for $s\in(0,1)$.
\begin{definition}\label{d2.2}
For $s\in(0,1)$ we define the space  
\[
\mathcal{H}^{s}((0,T),\mathbf{X})=
\begin{cases}
W^{s,2}((0,T),\mathbf{X}),\qquad\qquad\qquad\qquad s\in(0,1/2),\\
\{v\in W^{1/2,2}((0,T),\mathbf{X}),\quad \int_{0}^{T}\|v\|^{2}_{\mathbf{X}}\frac{dt}{t}<+\infty\},\quad s=1/2,\\
\{v\in W^{s,2}((0,T),\mathbf{X}),\quad v|_{t=0}=0\},\qquad \qquad s\in(1/2,1),\\
\end{cases}
\]
subject toh the  norms
\[
\|v\|_{\mathcal{H}^{s}((0,T),\mathbf{X})
}=
\begin{cases}
\|v\|_{W^{s,2}((0,T),\mathbf{X})},\qquad\qquad\qquad s\in(0,1),\ \ s\neq 1/2,\\
\bigg(\|v\|^{2}_{W^{1/2,2}((0,T),\mathbf{X})}+\int_{0}^{T}\|v\|^{2}_{\mathbf{X}}\frac{dt}{t}\bigg)^{1/2},\quad s=1/2.
\end{cases}
\]
\end{definition}
\noindent Setting $v(x,0)=v_{0}$ and taking into account \cite[Propositions 3 and 7]{Y2}, we arrive to the following norm equivalence
\begin{equation}\label{2.1}
C^{-1}\|v-v_{0}\|_{\mathcal{H}^{s}((0,T),\mathbf{X})}\leq \|\mathbf{D}_{t}^{s}v\|_{L_{2}((0,T),\mathbf{X})}
\leq C\|v-v_{0}\|_{\mathcal{H}^{s}((0,T),\mathbf{X})},
\end{equation}
for all $(v-v_{0})\in \mathcal{H}^{s}((0,T),\mathbf{X})$ and $s\in(0,1)$.
%%%%%%%%%%%%%%%%%%%%%%%%%%%%%%%%%%%%%%%%%%%%%%%%%%%%%%%%%%%%%%%%%%%%%%

%%%%%%%%%%%%%%%%%%%%%%%%%%%%%%%%%%%%%%%%%%%%%%%%%%%%%%%%%%%%%%%%%%%%%%

\section{General Assumptions}
\label{s3}
\noindent First, we state our general hypothesis on the structural terms of the model. To this end,
 introducing
$$
\omega_{1-\nu}(t)=\frac{t^{-\nu}}{\Gamma(1-\nu)},
$$
we define the positive values $\nu^{*}$ and $T^{*}$ such that the kernels
\[
\mathcal{N}(t;\nu,\mu_{j})=\omega_{1-\nu}(t)-\omega_{1-\mu_{j}}(t),\quad j=1,2,...,N,
\]
are nonnegative for all $t\in[0,T^{*}]$ and $0<\mu_j<\nu\leq \nu^{*}<1$.

\begin{description}
 \item[h1 (Conditions on the fractional order of the derivatives)] We assume that
\begin{equation*}
\nu\in
\begin{cases}
(0,\nu^{*})\quad\text{if}\quad N\geq 1,\\
(0,1)\quad\text{if}\quad N=0,
\end{cases}
\quad\text{and}\quad  
\nu_i,\mu_{j}\in \Big(0,\frac{\nu(2-\alpha)}{2}\Big),\quad i=1,2,...,M,\, j=1,2,...,N, 
\end{equation*}
\[
0<\mu_1<...<\mu_{N}<\nu,\quad 0<\nu_1<...<\nu_{M}<\nu,\quad \nu_{i}\neq \mu_{j}\quad \text{for all} \quad i=1,2,...,M,\, j=1,2,...,N, 
\]
%\smallskip
    \item[h2 (Ellipticity conditions)]  There are positive constants 
		 $\delta_{i},$ $i=0,1,2,3,$ such that
     \begin{align*} 
a_2(x,t)&\geq\delta_0>0,\quad \varrho_{0}(t)\geq \delta_{1}>0,\\
 \varrho_{i}(t)&\geq \delta_{2}>0,\quad \gamma_{j}(t)\geq \delta_{3}>0,\quad i=1,2,...,M,\, j=1,2,...,N, 
    \end{align*}
   for any $(x,t)\in\bar{\Omega}_{T}$ and $t\in[0,T]$.	
       %\smallskip
    \item[h3 (Regularity of the coefficients)] We require
  \begin{align*}
		a_{k},b_{k}&\in
    \C^{\alpha,\frac{\alpha\nu}{2}}(\bar{\Omega}_{T}),\quad k=0,1,2,\quad \frac{\partial a_2}{\partial x}, \frac{\partial b_2}{\partial x}\in\C(\bar{\Omega}_{T}),\\
 		\varrho_{0},\varrho_i,\gamma_{j}&\in \C^{1}([0,T]),\quad i=1,2,...,M,\, j=1,2,...,N,
\end{align*}
and
\[
\frac{\partial \varrho_{i}}{\partial t}, \frac{\partial \varrho_{0}}{\partial t}, \frac{\partial\gamma_{j}}{\partial t}\geq 0,
\]
for all $t\in[0,T]$ and $i=1,...,M,$ $j=1,2,...,N.$

Besides, in the case of  $N\geq 1$ the representation holds
\[
\varrho_0=\varrho+\sum_{i=1}^{N}\gamma_{i}
\]
with the  positive function $\varrho$ having the properties of the function $\varrho_{0}$.
%    \smallskip
		\item[h4 (Condition on the kernel)] The summable kernel $\mathcal{K}$ fulfills the estimate
$$
|\mathcal{K}(t)|\leq \frac{C}{t^\beta}, \quad \beta\in(0,1-\nu]
$$
for any $t\in[0,T]$.
%\smallskip
				     \item[h5 (Conditions on the given functions)] We require that the given functions possess  the regularity:
						
						\noindent\textbf{(i)} either
     \begin{align*}
	 u_{0}(x)&\in C^{2+\alpha}(\bar{\Omega}),
\qquad\qquad g\in\C^{\alpha,\frac{\alpha\nu}{2}}(\bar{\Omega}_{T}),\\
    \psi(x,t)&\in\C^{2+\alpha,\frac{2+\alpha}{2}\nu}(\partial\Omega_{T}),
\quad
    \psi_1(x,t)\in\C^{1+\alpha,\frac{1+\alpha}{2}\nu}(\partial\Omega_{T}),
\end{align*}

%\smallskip
\noindent \textbf{(ii)} or 
$$\psi,\psi_1,g\equiv 0,\qquad u_0\in
\begin{cases}
 W^{2,2}(\Omega)\cap\overset{0}{W}\,^{1,2}(\Omega)\quad\text{in the case of \bf{DBC}},\\
 W^{2,2}(\Omega)\qquad\qquad\quad\text{in the case of \bf{NBC}}.
\end{cases}$$
    %\smallskip
        \item[h6 (Conditions on the nonlinearity)]  
		The function $f(u)$ satisfies the one of two conditions:
		
		\noindent\textbf{(i)}
		either $f(u)$ is  the local Lipshits, i.e. for every $\rho>0$ there exists a positive constant $C_{\rho}$ such that
		\[
		|f(u_1)-f(u_2)|\leq C_{\rho}|u_1-u_2|
		\]
		for any $u_1,u_2\in[-\rho,\rho]$; and 
		
		\noindent there is a positive constant $L$ such that
		\begin{equation}\label{4.1}
		|f(u)|\leq L(1+|u|)\quad \text{for any}\quad u\in\R;
		\end{equation}
	
				\noindent\textbf{(ii)}
				or
		\begin{equation}\label{4.2}
		\begin{cases}
		f\in\C^{1}(\R),\\
		|f(u)|\leq L_1(1+|u|^{r}),\\
		uf(u)\geq -L_2+L_3|u|^{r+1},\\
		f'(u)\geq -L_{4},
		\end{cases}
		\end{equation}
		for   some nonnegative constants $r$ and $L_{i}$, $i=1,2,3,4$.
		\item[h7 (Compatibility conditions)] The following compatibility conditions hold for every
		 $x\in\partial\Omega$ at the initial time
     $t=0$, 
    \begin{equation*}
\psi(x,0)=u_{0}(x)\quad\text{and}\quad
\mathbf{D}_{t}\psi|_{t=0}=\mathcal{L}_{1}u_{0}(x)|_{t=0}-f(u_0)+g(x,0)
    \end{equation*}
		if the \textbf{DBC} \eqref{1.2} holds, and
		\[
		\frac{\partial u_0}{\partial x}(x)=\psi_{1}(x,0),
		\]
		if the \textbf{NBC} \eqref{1.2*} holds.
		       \end{description}
\begin{remark}\label{r3.1}
Thanks to Lemma 4.1 in \cite{KPV1}, the equality is true 
\[
(\mathcal{K}*\mathcal{L}_{2}u)(x,0)=0
\]
for any $u\in \C^{2+\alpha,\frac{2+\alpha}{2}\nu}(\partial\Omega_{T})$ and  any $x\in\partial\Omega$. That explains the absence of the memory term $(\mathcal{K}*\mathcal{L}_{2}u)$  in the compatibility conditions \textbf{h7}.
\end{remark}
\begin{remark}\label{r3.2}
It is worth noting that the existence of $\nu^{*}$ and $T^{*}$ in assumption \textbf{h1} is provided by  \cite[Lemma 4]{JK}. Indeed, this lemma establishes the existence of the pair $(\nu_{\gamma},T_{\gamma})$, $0<\nu_{\gamma}<1$ and $T_{\gamma}=e^{-\gamma}$ ($\gamma\approx 0.577$ is the Euler-Mascheroni constant), such that the function $\omega_{1-\nu}(t)$ is strictly increasing for all $\nu\in(0,\nu_{\gamma})$ and each $t\in[0,T_{\gamma}]$. Thus, this assertion tells us that the kernels  $\mathcal{N}(t;\nu,\mu_{j})$ are positive if $0<\mu_{j}<\nu<\nu_{\gamma}$ and $t\in[0,T_{\gamma}]$. Hence, we can select $\nu^{*}=\nu_{\gamma}$ and $T^{*}=T_{\gamma}$ in \textbf{h1}. 

Nevertheless, if $T^{*}<T_{\gamma}$, then the value $\nu^{*}$ can be chosen greater than $\nu_{\gamma}$. Unfortunately, an analytical proof of such conjecture as well as explicit  values of  $\nu^{*}$ and $T^{*}$ seem to be out of reach. This is the point of the story where the Numerics steps in. 
Indeed, let us examine case $\mu_{j}=\frac{\nu}{j+1},$ $j=1,2,3,$ and $T^{*}\leq 0.11$. We find numerically $\nu_{j}^{*}=\nu^{*}(T^{*},\mu_j)$ and $\hat{\nu}_{\gamma}^{*}=\hat{\nu}_{\gamma}(T^{*})$, which provide for all $t\in[0,T^{*}]$:
\[
\mathcal{N}\bigg(t;\nu,\frac{\nu}{j+1}\bigg)\geq 0\quad \text{for}\quad \frac{\nu}{j+1}<\nu\leq \nu_{j}^{*},\quad \text{and}
\]
\[
\omega_{1-\nu}(t)\quad\text{is strictly  increasing for } \quad \nu<\hat{\nu}^{*}_{\gamma}.
\]
Then,  setting $\nu^{*}=
\underset{j\in\{1,2,3,\}}{\min}\, \nu_{j}^{*}$, we  ensure the fulfillment of  assumption \textbf{h1} in the considered case. 

In particular, our  numeric calculations (presented with Figure \ref{fig:1} and Table \ref{tab:table1}) demonstrate that if  $T^{*}=0.1$, then  $\nu^{*}=
0.7200$, while  $\hat{\nu}^{*}_{\gamma}=0.5614$.
\begin{figure}[htp]
  %\centering
\includegraphics[scale=0.5]{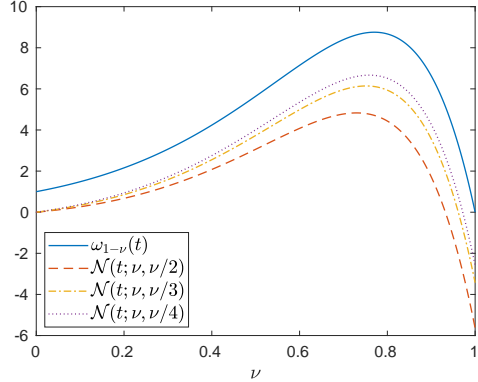}
\centerline{\text{(a)}  $T^{*}=0.01$ }
               \label{fig:a}
 %\centering
%\hfill
%\hspace{4ex}
        \includegraphics[scale=0.5]{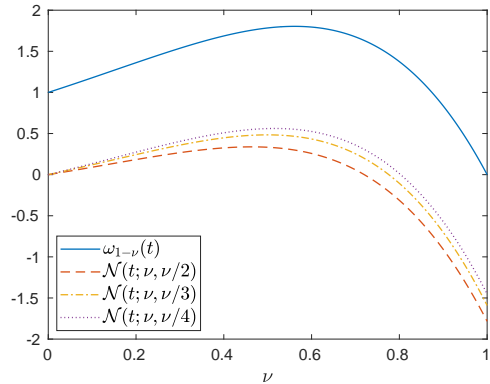}
				\centerline{\text{(b)}  $T^{*}=0.1$ }
               \label{fig:b}
								 \caption{The behavior of the function $\omega_{1-\nu}(t)$ and the kernels $\mathcal{N}(t;\nu,\frac{\nu}{j+1})$ for $j=1,2,3,$  with (a) 
								$T^{*}=0.01$, (b) $T^{*}=0.1$. 
   }
    \label{fig:1}
\end{figure}
	\begin{table}[htp]
  \begin{center}
    \caption{Values of $\hat{\nu}^{*}_{\gamma}$ and $\nu^{*}_{j}$, $j=1,2,3,$ for the corresponding value $T^{*}$}
    \label{tab:table1}
    \begin{tabular}{c|c|c|c|c|c|c|c|c|c|c|c} 
		%\hline
      %\multicolumn{4}{c|}{$|\nu-\nu_{\delta}(\lambda_{i_{j_0}},\tilde{t}_{j_{0}})|$} & $\nu$\\ % <-- Combining two cells with alignment c| and content 12.
      \hline
     $T^{*}$ & $0.01$ & $0.02$ &  $0.03$ & $0.04$ & $0.05$ & $0.06$ &  $0.07$ & $0.08$ & $0.09$ & $0.1$ & $0.11$ \\
             \hline						
    	$\hat{\nu}^{*}_{\gamma}$& 0.7703 & 0.7303 & 0.7003 & 0.6751 & 0.6527 & 0.6322 & 0.6131 & 0.5950 & 0.5779 & 0.5614 & 0.5455\\								
								\hline						
    	$\nu^{*}_{1}$& 0.9321 & 0.8984 & 0.8703 & 0.8451 & 0.8218 & 0.7999 & 0.7789 & 0.7586 & 0.7390 & 0.7200 & 0.7013\\
						\hline						
    	$\nu^{*}_{2}$& 0.9616 & 0.9361 & 0.9133 & 0.8919 & 0.8714 & 0.8515 & 0.8321 & 0.8131 & 0.7944 & 0.7759 & 0.7577\\
							\hline						
    	$\nu^{*}_{3}$& 0.9716 & 0.9505 & 0.9306 & 0.9116 & 0.8929 & 0.8746 & 0.8564 & 0.8385 & 0.8206 & 0.8023 & 0.7851\\	
    \end{tabular}
  \end{center}
\end{table}

\end{remark}
\begin{remark}\label{r3.3}
We remark that the examples of the nonlinearity $f(u)$ and the kernel $\mathcal{K}$ satisfying assumptions \textbf{h6} and \textbf{h4} are given in \cite[Example 3.3]{KPV3}, \cite[Remark 3.2]{KPV2} and \cite[(1.4)]{GPM}, respectively.
\end{remark}

%%%%%%%%%%%%%%%%%%%%%%%%%%%%%%%%%%%%%%%%%%%%%%%%%%%%%%%%%%%%%%%%%%%%%%

%%%%%%%%%%%%%%%%%%%%%%%%%%%%%%%%%%%%%%%%%%%%%%%%%%%%%%%%%%%%%%%%%%%%%%

\section{Main Results}
\label{s4}

\noindent Now we are in the position to state our first main result related with a classical solvability of \eqref{1.1}-\eqref{1.3}.

\begin{theorem}\label{t4.1}
Let $T>0$ be arbitrarily given, and let assumptions \textbf{h1-h4}, \textbf{h5 (i)} and \textbf{h7} hold. Moreover, we assume that
$f(u)$ meets requirement \textbf{h6(i)} if $N\geq 1$, while in the case of $N=0$, $f(u)$ satisfies  \textbf{h6}.
Then, equation \eqref{1.1} with the initial condition \eqref{1.3}, subjects either to the \textbf{DBC} \eqref{1.2} or to the \textbf{NBC} \eqref{1.2*}, admits a unique classical solution $u=u(x,t)$ satisfying regularity
\[
u\in\C^{2+\alpha,\frac{2+\alpha}{2}\nu}(\bar{\Omega}_{T}), \quad \mathbf{D}_{t}^{\nu_{i}}u,\, \mathbf{D}_{t}^{\mu_{j}}u\in\C^{\alpha,\frac{\alpha\nu}{2}}(\bar{\Omega}_{T}),\, i=1,2,...,M,\, j=1,2,...,N.
\] 
\end{theorem}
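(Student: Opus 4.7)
\medskip

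\noindent\textbf{Proof proposal.} The plan is to implement the continuation method announced in the outline, studying the one-parameter family
\[
\mathbf{D}_{t}u-\mathcal{L}_{1}u-\mathcal{K}*\mathcal{L}_{2}u=\lambda[f(u_{0})-f(u)]+g(x,t)-f(u_{0}),\qquad \lambda\in[0,1],
\]
equipped with the initial condition \eqref{1.3} and either \textbf{DBC} or \textbf{NBC}. I denote by $\Lambda\subset[0,1]$ the set of parameters for which this problem admits a solution in $\mathcal{E}:=\C^{2+\alpha,\frac{2+\alpha}{2}\nu}(\bar{\Omega}_{T})$ with $\mathbf{D}_{t}^{\nu_i}u,\mathbf{D}_{t}^{\mu_j}u\in\C^{\alpha,\frac{\alpha\nu}{2}}(\bar{\Omega}_{T})$; the goal is to show $\Lambda=[0,1]$ by proving $0\in\Lambda$, openness and closedness, after which the case $\lambda=1$ gives the theorem.

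The first step is to solve the \emph{base case} $\lambda=0$, which is a linear multi-term fractional problem with right-hand side $g-f(u_{0})\in\C^{\alpha,\frac{\alpha\nu}{2}}(\bar{\Omega}_{T})$ (the latter regularity follows from \textbf{h5(i)}, \textbf{h6} and the local Lipschitz property of $f$). For this I would invoke the linear theory of \cite{PSV6}, whose applicability is guaranteed by \textbf{h1}--\textbf{h4}, \textbf{h7} (the compatibility is inherited because $\mathbf{D}_{t}\psi|_{t=0}=\mathcal{L}_{1}u_{0}-f(u_{0})+g(x,0)$ matches the right-hand side exactly at $\lambda=0$). The second step, which is the heart of the argument, is to establish \emph{uniform a priori estimates}: for any $\lambda\in[0,1]$ and any solution $u_\lambda\in\mathcal{E}$,
\[
\|u_\lambda\|_{\C^{2+\alpha,\frac{2+\alpha}{2}\nu}(\bar{\Omega}_{T})}+\sum_{i}\|\mathbf{D}_{t}^{\nu_i}u_\lambda\|_{\C^{\alpha,\frac{\alpha\nu}{2}}}+\sum_{j}\|\mathbf{D}_{t}^{\mu_j}u_\lambda\|_{\C^{\alpha,\frac{\alpha\nu}{2}}}\le C_{*},
\]
with $C_{*}$ independent of $\lambda$. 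Here I would proceed in stages: first, a uniform $L_\infty$ bound obtained via testing and the nonnegativity of the kernels $\mathcal{N}(t;\nu,\mu_j)$ on $[0,T^{*}]$ together with the sign assumptions on $\partial_t\varrho_i,\partial_t\gamma_j$ in \textbf{h3} (using the structural decomposition $\varrho_0=\varrho+\sum\gamma_i$ to group the ``bad'' terms with positive ones); second, fractional Sobolev estimates using the norm equivalence \eqref{2.1}; third, bootstrapping these to the Hölder scale via the Schauder-type estimates for the linear problem proved in Section \ref{s6}, treating $\mathcal{K}*\mathcal{L}_{2}u$ and $\lambda(f(u_{0})-f(u_\lambda))$ as lower-order perturbations thanks to \textbf{h4} and \textbf{h6(i)}. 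The estimates are then iterated on successive short intervals of length $T^{*}$ and concatenated to cover $[0,T]$.

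With these estimates at hand, \emph{closedness} of $\Lambda$ follows from compactness of the embedding $\mathcal{E}\hookrightarrow\C^{2+\alpha',\frac{2+\alpha'}{2}\nu}(\bar{\Omega}_{T})$ for $\alpha'<\alpha$, combined with stability of the fractional operators under this convergence. \emph{Openness} is obtained by linearizing around a known solution $u_{\lambda_0}$ and applying the implicit function theorem in the Banach space $\mathcal{E}$: the linearized operator
\[
v\mapsto\mathbf{D}_{t}v-\mathcal{L}_{1}v-\mathcal{K}*\mathcal{L}_{2}v+\lambda_0 f'(u_{\lambda_0})v
\]
is an isomorphism between the natural data spaces, again by the linear theory of \cite{PSV6} (since $f'(u_{\lambda_0})\in\C^{\alpha,\frac{\alpha\nu}{2}}$ by \textbf{h6} when $N=0$, while \textbf{h6(i)} combined with the already-established bound on $u_{\lambda_0}$ yields a bounded Lipschitz constant when $N\ge1$). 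Uniqueness for the original problem at $\lambda=1$ follows from the same testing/energy argument applied to the difference of two solutions, using local Lipschitz continuity of $f$ on a ball determined by the a priori $L_\infty$ bound.

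The \emph{main obstacle} I anticipate is the a priori $L_\infty$ (and subsequently $\C^{\alpha,\frac{\alpha\nu}{2}}$) bound when $N\ge1$: since the kernel $\mathcal{N}$ in the representation of $\mathbf{D}_{t}u$ may be sign-changing or negative once $T>T^{*}$, the usual maximum-principle or positivity-of-kernel arguments break down, and one must instead exploit the decomposition $\varrho_0=\varrho+\sum_i\gamma_i$ in \textbf{h3} to pair each negative term $\mathbf{D}_t^{\mu_j}(\gamma_j u)$ with a piece of $\mathbf{D}_t^{\nu}(\gamma_j u)$, rewriting the difference via the kernels $\mathcal{N}(t;\nu,\mu_j)$ which \emph{are} nonnegative on short intervals by \textbf{h1}. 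This is precisely the reason growth condition \textbf{h6(i)} (linear growth \eqref{4.1}), rather than the dissipative \textbf{h6(ii)}, is required when $N\ge1$: the usual coercivity $uf(u)\ge -L_2+L_3|u|^{r+1}$ cannot be leveraged without a nonnegative kernel, so one must absorb $f(u)$ by linear Grönwall-type inequalities adapted to the multi-term fractional setting.
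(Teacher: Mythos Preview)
Your proposal is correct and follows essentially the same route as the paper: continuation in $\lambda$, linear theory from \cite{PSV6} for the base case, uniform a priori estimates exploiting the decomposition $\varrho_0=\varrho+\sum_j\gamma_j$ to pair each $-\mathbf{D}_t^{\mu_j}(\gamma_j u)$ with $\mathbf{D}_t^{\nu}(\gamma_j u)$ so that the nonnegative kernels $\mathcal{N}(t;\nu,\mu_j)$ on $[0,T^*]$ can be used, and then openness/closedness of $\Lambda$; you also correctly identify why \textbf{h6(i)} rather than \textbf{h6(ii)} is needed when $N\ge1$. The one step you underspecify is the passage from the short interval $[0,T_0]$ to $[0,T]$: because the Caputo derivatives carry the full history, one \emph{cannot} simply restart on $[T_0,2T_0]$ with new initial data and concatenate. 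The paper handles this (Section~\ref{s6.5}) by introducing a cut-off $\xi(t)$ vanishing for $t\ge 3T_0/4$, setting $\mathcal{U}=\xi u$ and $\mathcal{V}=u-\mathcal{U}$, and then shifting the time variable so that $\mathcal{V}$ solves a problem of the same type with zero initial data on a new interval of length $T_0$; the estimates on $\mathcal{U}$ (Corollary~\ref{c6.1}, via Lemma~\ref{l5.0}) furnish the right-hand side bounds needed to repeat the argument. Apart from this, your outline matches the paper's proof; note that the paper also first subtracts off an auxiliary linear solution $\mathfrak{U}$ to reduce to homogeneous initial and boundary data before running the continuation on the remainder $v=u-\mathfrak{U}$, which slightly simplifies the a priori estimates.
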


\begin{remark}\label{r4.1}
Actually, with inessential modifications in the proof, the very same result holds for the boundary value problem \eqref{1.1}, \eqref{1.3}
subject to either the mixed boundary conditions, e.g.
\[
u(a,t)=\psi(a,t),\quad\frac{\partial u}{\partial x}(b,t)=\psi_{1}(b,t),\qquad t\in[0,T],
\]
or the boundary conditions of the third kind.
\end{remark}
Before stating our next result, we specify the notation of a strong solution to \eqref{1.1}-\eqref{1.3}.
\begin{definition}\label{d4.1}
A function $u$ is called a strong solution to problems \eqref{1.1}-\eqref{1.3} if

\noindent$\bullet$ $u\in\C(\bar{\Omega}_{T})\cap L_2((0,T),W^{2,2}(\Omega)),$ $\mathbf{D}_{t}^{\nu} u, \mathbf{D}_{t}^{\nu_{i}} u,\, \mathbf{D}_{t}^{\mu_{j}} u\in L_{2}(\Omega_{T})$;

\noindent$\bullet$ the boundary and initial conditions \eqref{1.2}-\eqref{1.3} hold;

\noindent$\bullet$ for any fixed $T>0$ and any $\phi\in L_2(\Omega_{T})$,
\[
\int_{\Omega}\int_{0}^{T}(\mathbf{D}_{t}u-\mathcal{L}_{1}u-\mathcal{K}*\mathcal{L}_{2}u+f(u))\phi dxdt=\int_{\Omega}\int_{0}^{T} g\phi dxdt.
\]
\end{definition} 

Now we are ready to assert next results concerning to the existence of a strong solution to \eqref{1.1}-\eqref{1.3}.
\begin{theorem}\label{t4.2}
Let assumptions \textbf{h1-h4}, \textbf{h5(ii)} hold. Let the nonlinearity $f(u)$ satisfy  \textbf{h6(i)}  if $N\geq 1$, while  $f(u)$ meets requirement \textbf{h6} if $N=0$. Besides, in the case of \textbf{NBC}, we require that 
\[
\frac{\partial u_0}{\partial x}(a)=\frac{\partial u_0}{\partial x}(b)=0.
\]
Then problem \eqref{1.1}-\eqref{1.3} admits a unique strong solution $u=u(x,t)$.
\end{theorem}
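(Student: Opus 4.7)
The plan is to construct a strong solution as a limit of classical solutions supplied by Theorem~\ref{t4.1} applied to regularized problems, and then establish uniqueness via a difference estimate.

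Step 1 (regularization of the initial data). Since $u_0\in W^{2,2}(\Omega)$ with the appropriate boundary trace, I approximate it by a sequence $u_0^{(n)}\in \C^{2+\alpha}(\bar\Omega)$ that converges to $u_0$ in $W^{2,2}(\Omega)$ and satisfies the compatibility conditions \textbf{h7} with $\psi=\psi_1=g\equiv 0$. In the DBC case this forces $u_0^{(n)}|_{\partial\Omega}=0$ together with $\mathcal{L}_1 u_0^{(n)}|_{t=0}-f(u_0^{(n)})=0$ on $\partial\Omega$, which can be arranged by a boundary correction of a mollification of $u_0$ via a cutoff procedure; in the NBC case only $\partial_x u_0^{(n)}|_{\partial\Omega}=0$ must be preserved, granted by assumption. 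Theorem~\ref{t4.1} then yields classical solutions $u^{(n)}\in \C^{2+\alpha,\frac{2+\alpha}{2}\nu}(\bar\Omega_T)$ with data $u_0^{(n)}$.

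Step 2 (uniform a priori estimates). This is the core of the argument. Multiplying \eqref{1.1} by $u^{(n)}$, integrating over $\Omega$, and integrating by parts, one uses the uniform ellipticity $a_2\geq\delta_0$ of \textbf{h2} to extract a positive Dirichlet energy $\delta_0\|\partial_x u^{(n)}\|_{L_2}^2$. The convolution term is handled by Young's inequality and \textbf{h4}. The delicate point is showing that $(\mathbf{D}_t u^{(n)},u^{(n)})_{L_2(\Omega)}$ contributes a nonnegative dissipation. Using the representation $\varrho_0=\varrho+\sum_{i=1}^{N}\gamma_i$ from \textbf{h3}, I rewrite
\[
\mathbf{D}_t u^{(n)}=\mathbf{D}_t^{\nu}(\varrho u^{(n)})+\sum_{i=1}^{M}\mathbf{D}_t^{\nu_i}(\varrho_i u^{(n)})+\sum_{j=1}^{N}\bigl[\mathbf{D}_t^{\nu}(\gamma_j u^{(n)})-\mathbf{D}_t^{\mu_j}(\gamma_j u^{(n)})\bigr],
\]
where the last bracket is a Caputo-type operator with kernel $\mathcal{N}(t;\nu,\mu_j)\geq 0$ on $[0,T^*]$ by \textbf{h1}. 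An Alikhanov-type fractional chain-rule inequality, combined with the signs $\partial_t\varrho,\partial_t\varrho_i,\partial_t\gamma_j\geq 0$ from \textbf{h3}, then produces the desired nonnegative lower bound. Invoking \textbf{h6(i)} (Lipschitz estimate) or the dissipativity in \textbf{h6(ii)} when $N=0$, and applying a fractional Gronwall lemma together with the norm equivalence \eqref{2.1}, I obtain a uniform bound on $u^{(n)}$ in $L_\infty((0,T),L_2(\Omega))\cap L_2((0,T),W^{1,2}(\Omega))$ together with $\mathbf{D}_t^{\nu}u^{(n)}\in L_2(\Omega_T)$. A second round, testing against $-\partial_x^2 u^{(n)}$, upgrades this to a uniform bound in $L_2((0,T),W^{2,2}(\Omega))$; the equation itself then forces the same control on $\mathbf{D}_t^{\nu_i}u^{(n)}$ and $\mathbf{D}_t^{\mu_j}u^{(n)}$ in $L_2(\Omega_T)$. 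The argument is then iterated to cover the maximal $T$, so $T$ in the theorem is arbitrary. Note that when $N\geq 1$ one must restrict to \textbf{h6(i)} because the dissipative structure of \textbf{h6(ii)} is no longer compatible with the sign-indefinite component of $\mathbf{D}_t$.

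Step 3 (passage to the limit). An Aubin--Lions-type compactness adapted to fractional time derivatives (using the uniform $L_2$-bound on $\mathbf{D}_t^\nu u^{(n)}$ combined with the $L_2(W^{2,2})$-bound in $x$) yields a subsequence converging weakly in $L_2((0,T),W^{2,2}(\Omega))$, weakly in the fractional-derivative norms, and strongly in $L_2(\Omega_T)$. The local Lipschitz property (or the growth assumption in \textbf{h6}) then transmits strong convergence to $f(u^{(n_k)})\to f(u)$ in $L_2(\Omega_T)$; the linear terms, including the memory convolution, pass by weak convergence. Continuity of trace operators ensures that the initial and boundary conditions are preserved in the limit, so $u$ fulfils Definition~\ref{d4.1}.

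Step 4 (uniqueness). Given two strong solutions $u_1,u_2$, set $w=u_1-u_2$. Testing the difference equation against $w$ and repeating the positivity analysis of Step 2 on $(\mathbf{D}_t w,w)_{L_2(\Omega)}$, the nonlinearity contributes only $C_\rho\|w\|_{L_2(\Omega)}^2$ since both solutions are $L_\infty$-bounded; a fractional Gronwall inequality then forces $w\equiv 0$. The main obstacle is Step 2: making the positivity of $(\mathbf{D}_t u^{(n)},u^{(n)})$ rigorous when $\mathbf{D}_t$ carries the sign-indefinite combination \eqref{1.4}, and simultaneously absorbing the convolution $\mathcal{K}\ast\mathcal{L}_2 u^{(n)}$. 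The decomposition of $\varrho_0$ imposed in \textbf{h3} and the nonnegativity of $\mathcal{N}(t;\nu,\mu_j)$ from \textbf{h1} are precisely what allow this step to go through.
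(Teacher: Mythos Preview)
Your overall strategy matches the paper's: approximate $u_0$ by smooth data satisfying the compatibility conditions, invoke Theorem~\ref{t4.1} for classical solutions $u^{(n)}$, extract uniform a priori estimates, and pass to the limit. The paper does not re-derive the energy estimates inline as you do in Step~2; it simply applies Lemma~\ref{l6.1} (with $\lambda=1$, $g\equiv 0$) to the approximating problems, which already packages the full suite of bounds \eqref{6.2}--\eqref{6.4}.

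There is, however, one genuine gap. Definition~\ref{d4.1} requires $u\in\C(\bar\Omega_T)$, and your Aubin--Lions argument in Step~3 yields only strong $L_2(\Omega_T)$ convergence of a subsequence, which is not enough to verify joint continuity of the limit or the pointwise initial condition for all $\nu\in(0,1)$. The paper closes this by using the stronger estimate \eqref{6.2}, namely the uniform bound on $\|u^{(n)}\|_{\C([0,T],\C^1(\bar\Omega))}+\langle u^{(n)}\rangle^{(\nu/2)}_{t,\Omega_T}$; this places all $u^{(n)}$ in a bounded set of a H\"older class, so Arzel\`a--Ascoli gives uniform convergence in $\C^{\alpha^*,\frac{\alpha^*\nu}{2}}(\bar\Omega_T)$ for any $\alpha^*\in(0,1)$, and the limit automatically lies in $\C(\bar\Omega_T)$. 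Your Step~2, which stops at $L_\infty(L_2)\cap L_2(W^{2,2})$ with $\mathbf{D}_t^\nu u^{(n)}\in L_2$, does not recover this H\"older control; the paper obtains it through the pointwise $\C^1$ bound of Section~\ref{s6.2} (an $L_p$ iteration with $p\to\infty$) and the time-H\"older estimate of Section~\ref{s6.3}.

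A minor point: describing the boundary correction for $u_0^{(n)}$ as ``a cutoff procedure'' is too vague to enforce the second-order compatibility $\mathcal{L}_1 u_0^{(n)}|_{t=0}=f(u_0^{(n)})$ on $\partial\Omega$. The paper subtracts an explicit fifth-degree polynomial $P_{u_0}$ matching $u_0$, $u_0'$, and the required second derivative at $a,b$, approximates the remainder in $\overset{0}{W}\,^{2,2}(\Omega)$, and then adds the polynomial back.
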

\begin{remark}\label{r4.2}
In general, assumption \textbf{h3} on the coefficient $\varrho_0$ can be relaxed under additional requirements on the orders $\nu_{i}$ and $\mu_{j}$ and coefficients $\varrho_{i}$. Indeed, the results of Theorems \ref{t4.1} and \ref{t4.2} hold if instead of the condition
\[
\varrho_{0}(t)=\varrho(t)+\sum_{j=1}^{N}\gamma_{j}(t)
\]
we will require that
\begin{align*}
N&\leq M,\quad 0<\mu_1<\nu_1<\mu_2<\nu_2<...<\nu_{N-1}<\mu_{N}<\nu_{N}<...<\nu_{M}<\nu,\\
\varrho_{i}(t)&=\hat{\varrho}_{i}(t)+\gamma_{i}(t),\quad i=1,2,...,N,
\end{align*}
with $\hat{\varrho}_{i}(t)$ having the properties of the function $\varrho_{i}(t)$.

Moreover, if $M=N=0,$ 
then  Theorems \ref{t4.1} and \ref{t4.2} hold if $\varrho=\varrho(x,t)\in\C^{1}(\bar{\Omega}_{T})$.
\end{remark}
\begin{remark}\label{r4.3}
Theorems \ref{t4.1} and \ref{t4.2} hold if  $\mathbf{D}_{t}u$ in \eqref{1.4} is replaced by 
\[
\mathbf{D}_{t}u=\varrho(t)\mathbf{D}_{t}^{\nu}u+\sum_{i=1}^{M}\varrho_{i}(t)\mathbf{D}_{t}^{\nu_i}u-\sum_{j=1}^{N}\gamma_{j}(t)\mathbf{D}_{t}^{\mu_j}u.
\]
\end{remark}
\begin{remark}\label{r4.4}
It is worth noting that our assumptions on the kernel $\mathcal{K}$ include the case $\mathcal{K}=0$, telling that the multi-term subdiffusion equation:
\[
\mathbf{D}_{t}u-\mathcal{L}_1u+f(u)=g(x,t)
\]
fits in our analysis and described by the theorems above.
\end{remark}

Finally, we remark that in our work we do not consider equation \eqref{1.1} if $\nu=1$, since (thanks to presence the first order derivative in time in \eqref{1.4}) this case is examined with simpler approach.

The remaining part of the paper is devoted to the proof of Theorems \ref{t4.1} and \ref{t4.2}.
%%%%%%%%%%%%%%%%%%%%%%%%%%%%%%%%%%%%%%%%%%%%%%%%%%%%%%%

%%%%%%%%%%%%%%%%%%%%%%%%%%%%%%%%%%%%%%%%%%%%%%%%%%%%%%%
\section{Technical Results}
\label{s5}

\noindent In this section we describe some properties of fractional derivatives and integrals,  along with  several technical assertions which will be used in the course of our analysis. First, we define fractional Riemann-Liouville integrals and derivatives.

Throughout this work, for any $\theta>0$ we denote (as we did before)
\begin{equation}\label{5.1}
\omega_{\theta}(t)=\frac{t^{\theta-1}}{\Gamma(\theta)},
\end{equation} 
and define the fractional Riemann-Liouville integral and the derivative of order $\theta$, respectively, of a function $v=v(x,t)$ with respect to time $t$ as
\begin{equation*}
I_{t}^{\theta}v(x,t)=(\omega_{\theta}* v)(x,t),\quad
\partial _{t}^{\theta}v(x,t)=\frac{\partial^{\lceil\theta\rceil}}{\partial
t^{\lceil\theta\rceil}}(\omega_{\lceil\theta\rceil-\theta}*
v)(x,t),
\end{equation*}
where $\lceil\theta\rceil$ is the ceiling function of $\theta$
(i.e.\ the smallest integer greater than or equal to $\theta$).

Clearly, for $\theta\in(0,1)$ we have
\begin{equation*}
\partial _{t}^{\theta}v(x,t)=\frac{\partial}{\partial
t}(\omega_{1-\theta}* v)(x,t).
\end{equation*}
Accordingly,  the Caputo fractional derivative of the
order $\theta\in(0,1)$ to the function $v(x,t)$ can be represented as 
\begin{equation}\label{5.2}
\mathbf{D}_{t}^{\theta}v(x,t)
=\partial _{t}^{\theta}v(x,t)-\omega_{1-\theta}(t)v(x,0),
 \end{equation}
if the both derivatives exist (see  \cite[(2.4.8)]{KST}).

In the first claim, which subsumes Propositions 4.1 and 4.2 in \cite{KPV3}, we recall some important relations for the fractional derivatives and integrals.
\begin{proposition}\label{p5.1}
The following holds.
\begin{enumerate}
 \item[{\bf (i)}] Let $\theta,\theta_1\in(0,1),$ $t\in[0,T].$ Given any function $w=w(t)\in\C^{\theta_1}([0,T])$
\[
I_{t}^{\theta}\partial_{t}^{\theta}w(t)=w(t).
\]
If in addition $\theta<2\theta_1$ and $p\geq 2$ is any even integer, it is also true that
\begin{align*}
\partial_{t}^{\theta}w^{p}(t)&\leq \partial_{t}^{\theta}w^{p}(t)+(p-1)w^{p}(t)\omega_{1-\theta}(t)\\
&
\leq p w^{p-1}(t)\partial_{t}^{\theta}w(t).
\end{align*}
If  $w$ is a nonnegative function then these bounds hold for any integer odd $p$.
\item[{\bf (ii)}] For any given positive numbers $\theta_{1}$ and
    $\theta_{2}$, and any function $k\in L_1(0,T),$ the following relations are fulfilled
    \begin{equation*}
\omega_{\theta_{1}}*\omega_{\theta_{2}}=\omega_{\theta_{1}+\theta_{2}}(t),\,
1*\omega_{\theta_{1}}=\omega_{1+\theta_{1}}(t), \
\omega_{\theta_1}(t)\geq C T^{\theta_1-1},\, \omega_{\theta_{1}}*
k\leq C\omega_{\theta_{1}+1}\leq C\omega_{\theta_{1}},
    \end{equation*}
		 for any $t\in[0,T]$.
The positive constant $C$ depends only on
    $T$,
  $\theta_{1}$ and $\|k\|_{L_{1}(0,T)}$.
\end{enumerate}
\end{proposition}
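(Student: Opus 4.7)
The statement is explicitly announced to subsume Propositions 4.1 and 4.2 of \cite{KPV3}, so my approach is to organize the proof as a bookkeeping exercise around those two results: for each of the four assertions I would identify which cited fact delivers it and what short bridging argument, if any, is needed. The inversion identity $I_{t}^{\theta}\partial_{t}^{\theta}w=w$ in part \textbf{(i)} would be obtained by writing $\partial_{t}^{\theta}w=\frac{d}{dt}(\omega_{1-\theta}*w)$, invoking the semigroup relation $\omega_{\theta}*\omega_{1-\theta}=\omega_{1}\equiv 1$ (itself a special case of part \textbf{(ii)}), and differentiating in time; the H\"{o}lder regularity $w\in\C^{\theta_{1}}([0,T])$ is exactly what makes the interchange of differentiation and integration legitimate. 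The first of the two displayed inequalities in the chain is trivial, since $(p-1)w^{p}(t)\omega_{1-\theta}(t)\ge 0$ automatically for even $p$, and also for odd $p$ whenever $w\ge 0$.

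The essential content is the second inequality, a fractional chain-rule of Alikhanov type. My plan is to use the integration-by-parts representation
\[
\partial_{t}^{\theta}v(t)=\omega_{1-\theta}(t)v(0)+\int_{0}^{t}\omega_{1-\theta}(t-s)v'(s)\,ds
\]
applied to both $v=w$ and $v=w^{p}$, then combine the pointwise convexity estimate
\[
w^{p}(t)-w^{p}(s)\le p\,w^{p-1}(t)\bigl(w(t)-w(s)\bigr),\qquad 0\le s\le t,
\]
valid for even $p$ (and for odd $p$ when $w\ge 0$), with the Young inequality $pw^{p-1}(t)w(0)\le (p-1)w^{p}(t)+w^{p}(0)$ to absorb the boundary contribution at $s=0$. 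The assumption $\theta<2\theta_{1}$ is the minimal regularity threshold that keeps the singular convolutions absolutely convergent and all the manipulations above legitimate; this is precisely the range in which \cite[Proposition 4.2]{KPV3} carries out the argument.

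The kernel relations in part \textbf{(ii)} are then essentially bookkeeping. The semigroup identity $\omega_{\theta_{1}}*\omega_{\theta_{2}}=\omega_{\theta_{1}+\theta_{2}}$ is immediate from the Beta integral $B(\theta_{1},\theta_{2})=\Gamma(\theta_{1})\Gamma(\theta_{2})/\Gamma(\theta_{1}+\theta_{2})$ after the substitution $s=tr$, and $\theta_{2}=1$ gives the second identity. The lower bound $\omega_{\theta_{1}}(t)\ge CT^{\theta_{1}-1}$ on $[0,T]$ follows from monotonicity of $r\mapsto r^{\theta_{1}-1}$: when $\theta_{1}<1$ the function $\omega_{\theta_{1}}$ is decreasing and attains its infimum on $(0,T]$ at $t=T$, while when $\theta_{1}\ge 1$ the monotonicity is reversed but the positivity on $(0,T]$ still yields a constant of the stated form. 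Finally, the convolution bound $\omega_{\theta_{1}}*k\le C\omega_{\theta_{1}+1}\le C\omega_{\theta_{1}}$ combines Young's convolution inequality with the identity $1*\omega_{\theta_{1}}=\omega_{\theta_{1}+1}$ and the ratio $\omega_{\theta_{1}+1}(t)/\omega_{\theta_{1}}(t)=t/\theta_{1}\le T/\theta_{1}$, all of which are absorbed into the generic constant $C=C(T,\theta_{1},\|k\|_{L_{1}(0,T)})$.

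The only genuinely nonroutine ingredient is the Alikhanov-type inequality of part \textbf{(i)}: having the sharp threshold $\theta<2\theta_{1}$ rather than a more naive condition such as $\theta<\theta_{1}$ is the delicate point, because at this threshold the kernel $\omega_{1-\theta}(t-s)$ and the H\"{o}lder increment $|w(t)-w(s)|\lesssim|t-s|^{\theta_{1}}$ combine to just-integrable singularities of order $|t-s|^{2\theta_{1}-\theta-1}$. This is precisely what is established in \cite[Propositions 4.1--4.2]{KPV3}, and once that building block is in place the present proposition follows by the assembly outlined above.
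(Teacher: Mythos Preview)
Your proposal is correct and matches the paper's approach exactly: the paper gives no proof at all for this proposition, simply stating that it ``subsumes Propositions 4.1 and 4.2 in \cite{KPV3}'' and treating the claim as a recall of known facts. Your bookkeeping around those cited results, together with the short bridging arguments you outline (Beta integral for the semigroup identity, monotonicity for the lower bound, convexity plus Young for the Alikhanov-type inequality), is precisely the content of those propositions; one minor caution is that the integration-by-parts representation you write for $\partial_{t}^{\theta}v$ presupposes $v'$ exists, whereas $w\in\C^{\theta_{1}}$ is only H\"older, so in practice the argument in \cite{KPV3} proceeds via the singular-kernel form $\partial_{t}^{\theta}w(t)=\omega_{1-\theta}(t)w(t)+\tfrac{\theta}{\Gamma(1-\theta)}\int_{0}^{t}\tfrac{w(t)-w(s)}{(t-s)^{1+\theta}}\,ds$ before applying the convexity estimate you state.
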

Our next assertion is similar to point (i) of Proposition \ref{p5.1} stated to the convolution with the kernel
\begin{equation}\label{5.3}
\mathcal{N}_{\theta}(t)=\mathcal{N}(t;\theta_1,\theta_2)=\omega_{1-\theta_{1}}(t)-
\omega_{1-\theta_{2}}(t).
\end{equation}
\begin{corollary}\label{c5.0}
Let $\theta^{*}$ and $T^{*}\leq T$ be the numbers such that the kernel $\mathcal{N}_{\theta}(t)$ is positive for $0<\theta_2<\theta_1\leq\theta^{*}\leq 1$ and $t\in[0,T^{*}]$.
  Then for any given function $w\in\C^{\theta_3}([0,T]),$ $\theta_3\in(\theta_1,1)$ and any even integer $p\geq 2$ the inequalities are fulfilled
\begin{align}\label{5.2**}\notag
\frac{d}{dt}(\mathcal{N}_{\theta}*w^{p})(t)&\leq \frac{d}{dt}(\mathcal{N}_{\theta}*w^{p})(t)+(p-1)w^{p}(t)\mathcal{N}_{\theta}(t)\\
&
\leq p w^{p-1}(t)\frac{d}{dt}(\mathcal{N}_{\theta}*w)(t),\quad \forall t\in[0,T^{*}].
\end{align}
If additionally $w$ is nonnegative  then theses bounds hold for any integer odd $p$.
\end{corollary}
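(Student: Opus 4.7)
The plan is to mimic the proof of Proposition \ref{p5.1}(i), which gives the same type of inequality with the kernel $\omega_{1-\theta}$, replacing it by the difference $\mathcal{N}_{\theta}$. The first bound in \eqref{5.2**} is trivial: its two sides differ by $(p-1)w^{p}(t)\mathcal{N}_{\theta}(t)\geq 0$ on $[0,T^{*}]$, since $\mathcal{N}_{\theta}\geq 0$ there and $w^{p}(t)\geq 0$ (automatically for $p$ even, and from the extra assumption $w\geq 0$ for $p$ odd). Only the second bound requires work.

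The key preliminary observation is that $\mathcal{N}_{\theta}$ is not only nonnegative but also \emph{nonincreasing} on $[0,T^{*}]$. Indeed, a direct differentiation of $\omega_{1-\theta_{j}}(t)=t^{-\theta_{j}}/\Gamma(1-\theta_{j})$ shows that $\mathcal{N}_{\theta}'$ vanishes at the unique critical point
\[
t_{c}=\bigg[\frac{\theta_{1}\Gamma(1-\theta_{2})}{\theta_{2}\Gamma(1-\theta_{1})}\bigg]^{1/(\theta_{1}-\theta_{2})},
\]
while $\mathcal{N}_{\theta}$ first vanishes at
\[
T_{0}=\bigg[\frac{\Gamma(1-\theta_{2})}{\Gamma(1-\theta_{1})}\bigg]^{1/(\theta_{1}-\theta_{2})}.
\]
Since $\theta_{1}>\theta_{2}>0$, the ratio $t_{c}/T_{0}=(\theta_{1}/\theta_{2})^{1/(\theta_{1}-\theta_{2})}>1$, hence $T_{0}<t_{c}$. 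Therefore on $[0,T^{*}]\subseteq[0,T_{0}]$ the kernel $\mathcal{N}_{\theta}$ is positive and monotone decreasing.

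With this in hand, setting
\[
E_{p}(t):=pw^{p-1}(t)\frac{d}{dt}(\mathcal{N}_{\theta}*w)(t)-\frac{d}{dt}(\mathcal{N}_{\theta}*w^{p})(t)-(p-1)w^{p}(t)\mathcal{N}_{\theta}(t),
\]
I would expand each time-derivative as $\frac{d}{dt}(\mathcal{N}_{\theta}*v)(t)=\mathcal{N}_{\theta}(t)v(0)+\int_{0}^{t}\mathcal{N}_{\theta}(t-s)v'(s)\,ds$ and integrate by parts in the remaining convolution against the auxiliary function $H(s):=w^{p-1}(t)w(s)-\tfrac{1}{p}w^{p}(s)$ (with $t$ held fixed). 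After cancellations the contributions carrying the factor $\mathcal{N}_{\theta}(t)$ drop out and one arrives at the identity
\[
E_{p}(t)=\int_{0}^{t}\mathcal{N}_{\theta}'(t-s)\,\phi(w(s),w(t))\,ds,\qquad \phi(a,b):=pb^{p-1}a-a^{p}-(p-1)b^{p}.
\]
Convexity of $x\mapsto x^{p}$ (on $\R$ if $p$ is even, on $[0,\infty)$ if $p$ is odd, covering both cases of the statement) gives $\phi\leq 0$, and by the monotonicity above $\mathcal{N}_{\theta}'(t-s)\leq 0$; hence the integrand is nonnegative and $E_{p}(t)\geq 0$, which is the required inequality. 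The delicate point is justifying the integration by parts in $E_{p}$: the kernel inherits the $t^{-\theta_{1}}$ singularity of $\omega_{1-\theta_{1}}$ at $t=0$, but the Hölder regularity $w\in\C^{\theta_{3}}([0,T])$ with $\theta_{3}>\theta_{1}$ ensures that in the truncated identity on $[0,t-\varepsilon]$ the boundary contribution $\mathcal{N}_{\theta}(\varepsilon)(H(t-\varepsilon)-H(t))=O(\varepsilon^{\theta_{3}-\theta_{1}})$ vanishes as $\varepsilon\to 0$, exactly as needed to pass to the limit.
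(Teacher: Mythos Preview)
Your argument is correct and rests on the same two pillars as the paper's proof: the monotonicity $\mathcal{N}_{\theta}'\leq 0$ on $[0,T^{*}]$ and an integral representation of the defect $E_{p}$ against $\mathcal{N}_{\theta}'$. The differences are in packaging. The paper first establishes a product identity for $\tfrac{d}{dt}(\mathcal{N}_{\theta}*ww_{1})$ directly from the definition of the derivative (see \eqref{5.2*}), then substitutes $w_{1}=w,w^{2},\ldots$ and closes by induction on $p$; you instead derive the closed formula $E_{p}(t)=\int_{0}^{t}\mathcal{N}_{\theta}'(t-s)\,\phi(w(s),w(t))\,ds$ in one stroke and conclude via the convexity of $x\mapsto x^{p}$, which is a tidy shortcut that bypasses the induction. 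For the monotonicity itself the paper's route is shorter than yours: the identity $\mathcal{N}_{\theta}'(t)=-\theta_{1}t^{-1}\mathcal{N}_{\theta}(t)-(\theta_{1}-\theta_{2})t^{-1}\omega_{1-\theta_{2}}(t)$ gives $\mathcal{N}_{\theta}'\leq 0$ immediately wherever $\mathcal{N}_{\theta}\geq 0$, so the critical-point analysis is unnecessary. One small caveat on your side: the expansion $\tfrac{d}{dt}(\mathcal{N}_{\theta}*v)=\mathcal{N}_{\theta}(t)v(0)+\int_{0}^{t}\mathcal{N}_{\theta}(t-s)v'(s)\,ds$ invokes $v'$, which $w\in\C^{\theta_{3}}$ need not possess; this is easily repaired either by smooth approximation or by deriving the Marchaud-type form $\tfrac{d}{dt}(\mathcal{N}_{\theta}*v)(t)=v(t)\mathcal{N}_{\theta}(t)-\int_{0}^{t}\mathcal{N}_{\theta}'(t-s)[v(t)-v(s)]\,ds$ directly from the H\"older regularity, after which your identity for $E_{p}$ follows at once.
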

\begin{proof}
It is worth noting that this claim is a simple consequence of the following inequalities: 
\begin{align}\label{5.2*}\notag
\frac{d}{dt}(\mathcal{N}_{\theta}*w\,w_1)(t)&=w\frac{d}{dt}(\mathcal{N}_{\theta}*w_1)(t)+w_1(t)\frac{d}{dt}(\mathcal{N}_{\theta}*w)(t)
-w(t)w_1(t)\mathcal{N}_{\theta}(t)\\\notag
&
+\int_{0}^{t}[w(t)-w(s)][w_1(t)-w_1(s)]
\frac{d}{d(t-s)}\mathcal{N}_{\theta}(t-s)ds,\\
\frac{d\mathcal{N}_{\theta}}{dt}(t)&\leq 0,
\end{align}
which hold for any $w_1\in\C^{\theta_3}([0,T])$ and any $t\in[0,T^{*}]$.

Indeed, substituting 
\[
w_{1}(t)=
\begin{cases}
w\quad\text{if}\quad p=2,\\
w^{2}\quad\text{if}\quad p=3,
\end{cases}
\]
in the first equality in \eqref{5.2*},
we derive the relations:
\begin{align*}
\frac{d}{dt}(\mathcal{N}_{\theta}*w^{2})(t)&=2w\frac{d}{dt}(\mathcal{N}_{\theta}*w)(t)-\mathcal{N}_{\theta}(t)w^{2}(t)
+
\int_{0}^{t}[w(s)-w(t)]^{2}\frac{d}{d(t-s)}\mathcal{N}_{\theta}(t-s)ds,\\
\frac{d}{dt}(\mathcal{N}_{\theta}*w^{3})(t)&=w\bigg[\frac{d}{dt}(\mathcal{N}_{\theta}*w^{2})(t)
+w(t)\frac{d}{dt}(\mathcal{N}_{\theta}*w)(t)
-\mathcal{N}_{\theta}(t)w^{2}(t)\bigg]\\
&
+
\int_{0}^{t}[w(s)-w(t)]^{2}[w(s)+w(t)]\frac{d}{d(t-s)}\mathcal{N}_{\theta}(t-s)ds.
\end{align*}
Then, taking into account the positivity of the kernel $\mathcal{N}_{\theta}(t)$ for $\theta_i$ and $t$ meeting requirements of this corollary, and appealing to the second inequality in \eqref{5.2*}, we arrive at the desired estimates if $p=2,3$ (in the last case we also used the positivity of $w$).

Finally, keeping in mind the obtained inequalities and exploiting the induction, we end up with \eqref{5.2**} for $p>3$. Thus, in order to complete the verification of Corollary \ref{c5.0}, we are left to prove \eqref{5.2*}. As for the second inequality in \eqref{5.2*}, the straightforward calculations provide
\[
\frac{d\mathcal{N}_{\theta}}{dt}(t)=-\theta_1t^{-1}\mathcal{N}_{\theta}(t)-(\theta_1-\theta_{2})t^{-1}\omega_{1-\theta_2}(t)\leq 0\quad\text{for}\quad t\in[0,T^{\star}].
\]
Coming to the verification of the first equality in \eqref{5.2*}, we take advantage of the definition of a derivative and have
\[
\frac{d}{dt}(\mathcal{N}_{\theta}*ww_1)(t)=\underset{\varepsilon\to 0}{\lim}\frac{1}{\varepsilon}\bigg[
\int_{0}^{t+\varepsilon}\mathcal{N}_{\theta}(t+\varepsilon-s)w_1(s)w(s)ds
-
\int_{0}^{t}
\mathcal{N}_{\theta}(t-s)w_1(s)w(s)ds
\bigg].
\]
Then, exploiting the easily verified equality
\[
w(s)w_1(s)=[w(s)-w(t)][w_{1}(s)-w_{1}(t)]-w(t)w_1(t)+w(t)w_1(s)+w(s)w_1(t),
\]
we end up with the equality
\begin{align*}
\frac{d}{dt}(\mathcal{N}_{\theta}* w_1w)(t)=&
\underset{\varepsilon\to 0}{\lim}\frac{1}{\varepsilon}\Big[\int_{0}^{t+\varepsilon}\mathcal{N}_{\theta}(t+\varepsilon-s)[w_1(s)-w_1(t)][w(s)-w(t)]ds
\\&-\int_{0}^{t}\mathcal{N}_{\theta}(t-s)[w_1(s)-w_1(t)][w(s)-w(t)]ds\Big]\\
&+
w_{1}(t)
\underset{\varepsilon\to 0}{\lim}\frac{1}{\varepsilon}\Big[\int_{0}^{t+\varepsilon}\mathcal{N}_{\theta}(t+\varepsilon-s)w(s)ds-\int_{0}^{t}\mathcal{N}_{\theta}(t-s)w(s)ds\Big]\\
&
+
w(t)
\underset{\varepsilon\to 0}{\lim}\frac{1}{\varepsilon}\Big[\int_{0}^{t+\varepsilon}\mathcal{N}_{\theta}(t+\varepsilon-s)w_{1}(s)ds-\int_{0}^{t}\mathcal{N}_{\theta}(t-s)w_{1}(s)ds\Big]\\
&
-
w_{1}(t)w(t)
\underset{\varepsilon\to 0}{\lim}\frac{1}{\varepsilon}\Big[\int_{0}^{t+\varepsilon}\mathcal{N}_{\theta}(t+\varepsilon-s)ds-\int_{0}^{t}\mathcal{N}_{\theta}(t-s)ds\Big].
\end{align*}
Finally, taking into account the smoothness of the functions $w$ and $w_1$, we obtain the desired equality. This finishes the proof of Corollary \ref{c5.0}.
\end{proof}
\begin{corollary}\label{c5.1}
Let $0<\theta_2<\theta_1<\theta\leq 1$ and $w=w(t)\in\C([0,T])$ be a positive function, and let for any fixed $T>0$
\begin{align}\label{5.6}\notag
0&<T_{1}=T_{1}(\theta_1,\theta_2)<\min\Big\{T, \Big(\frac{\theta_1\Gamma(1+\theta_1-\theta_2)}{\theta_2}\Big)^{\frac{1}{\theta_1-\theta_2}}\Big\},\\
0&<T_{2}=T_{2}(\theta_1,\theta_2,\theta)<\min\Big\{T, \Big(\frac{\theta_1\Gamma(1+\theta-\theta_2)}{\theta_2\Gamma(1+\theta-\theta_1)}\Big)^{\frac{1}{\theta_1-\theta_2}}\Big\}.
\end{align}
 Then the inequalities hold
\begin{align*}
\theta_1 I_{t}^{1}w(t)-\theta_2I_{t}^{1+\theta_1-\theta_2}w(t)&\geq 0\quad \text{for each}\quad t\in[0,T_{1}],\\
\theta_1 I_{t}^{1+\theta-\theta_1}w(t)-\theta_2I_{t}^{1+\theta-\theta_2}w(t)&\geq 0\quad \text{for each}\quad t\in[0,T_{2}].
\end{align*}
\end{corollary}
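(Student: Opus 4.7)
The plan is to reduce both inequalities to pointwise statements about their integrands, using the elementary representation $I_t^{\theta} w = \omega_{\theta}\ast w$ from Proposition \ref{p5.1}(ii) together with the positivity of $w$. Since both expressions are integrals of $w$ against an explicit kernel depending only on $t-s$, it suffices to check when that kernel is nonnegative on $[0,t]$, and the worst case is always the endpoint $s=0$, i.e.\ $\tau=t-s=t$.

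For the first inequality I would write
\[
\theta_1 I_{t}^{1}w(t)-\theta_2 I_{t}^{1+\theta_1-\theta_2}w(t)
=\int_{0}^{t}\Big[\theta_1-\frac{\theta_2 (t-s)^{\theta_1-\theta_2}}{\Gamma(1+\theta_1-\theta_2)}\Big]w(s)\,ds,
\]
using $\omega_{1}\equiv 1$. The bracket is a decreasing function of $s\mapsto t-s$ on $(0,t)$ so its minimum over $s\in[0,t]$ is attained at $s=0$, where it equals $\theta_1-\theta_2 t^{\theta_1-\theta_2}/\Gamma(1+\theta_1-\theta_2)$. This quantity is nonnegative precisely when $t\le\bigl(\theta_1\Gamma(1+\theta_1-\theta_2)/\theta_2\bigr)^{1/(\theta_1-\theta_2)}$, which is exactly the definition of $T_1$. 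Since $w\ge 0$, the integral is nonnegative on $[0,T_1]$.

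For the second inequality, the analogous computation gives
\[
\theta_1 I_{t}^{1+\theta-\theta_1}w(t)-\theta_2 I_{t}^{1+\theta-\theta_2}w(t)
=\int_{0}^{t}\Big[\frac{\theta_1(t-s)^{\theta-\theta_1}}{\Gamma(1+\theta-\theta_1)}
-\frac{\theta_2(t-s)^{\theta-\theta_2}}{\Gamma(1+\theta-\theta_2)}\Big]w(s)\,ds.
\]
Factoring out the (positive) first term in the bracket, nonnegativity is equivalent to
\[
(t-s)^{\theta_1-\theta_2}\le\frac{\theta_1\Gamma(1+\theta-\theta_2)}{\theta_2\Gamma(1+\theta-\theta_1)}
\qquad\text{for all }s\in[0,t],
\]
and again the worst case $s=0$ reproduces exactly the bound defining $T_2$.

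There is no genuine obstacle here: the argument is essentially two applications of the same idea, "compute, factor, evaluate at $s=0$". The only subtlety is bookkeeping with the shift parameter $\theta$ in the second inequality and noting that, because $\theta-\theta_1>0$ under the standing hypothesis $\theta_1<\theta$, the integrand is finite at $s=t$ and the monotonicity argument is applied cleanly. The continuity and positivity of $w$ are exactly what is needed to pass from pointwise nonnegativity of the kernel to nonnegativity of the integral.
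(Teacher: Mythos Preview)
Your argument is correct and follows essentially the same approach as the paper: write each difference as a single integral against an explicit kernel, check that the kernel is nonnegative on $[0,t]$ by evaluating at the extremal point (equivalently, use the monotonicity in $\tau=t-s$), and conclude via the positivity of $w$. The paper only spells out the first inequality and leaves the second as analogous, whereas you carry out both; otherwise the proofs coincide.
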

\begin{proof}
It is apparent that the second estimate is proved with the similar arguments as the first. Thus, we restrict ourselves with the verification of the  first inequality. It is worth noting that this bound
follows from  the definition of the fractional Riemann-Liouville integral and performing straightforward calculations. Namely, appealing to \eqref{5.1}, \eqref{5.6} and assumptions on $\theta_i$,  we easily conclude that
\[
\theta_1-\theta_2\omega_{1+\theta_1-\theta_2}(t)=\theta_1-\frac{\theta_2 t^{\theta_1-\theta_2}}{\Gamma(1+\theta_1-\theta_2)}
\geq
\theta_1-\frac{\theta_2 T_{1}^{\theta_1-\theta_2}}{\Gamma(1+\theta_1-\theta_2)}\geq 0.
\]
Thus, collecting these inequalities with the positivity of $w$, we end up with the desired estimate
\[
\theta_1 I_{t}^{1}w(t)-\theta_2I_{t}^{1+\theta_1-\theta_2}w(t)=\int_{0}^{t}[\theta_1-\theta_2\omega_{1+\theta_1-\theta_2}(\tau)]w(t-\tau)d\tau\geq 0,\quad t\in[0,T_{1}],
\]
which completes the proof of this corollary.
\end{proof}
Our next result deals with the fractional differentiation of a product, i.e. $\mathbf{D}_{t}^{\theta}(w_1w_2)$. We remark that the similar result under stronger assumption on the function $w_2$ is established in \cite[Corollary 3.1]{KPSV4}.

\begin{proposition}\label{p5.1*}
Let $\theta\in(0,1),$ and $w_1\in\C^{1}([0,T]),$ $w_2\in\C([0,T]).$ 
\begin{enumerate}
\item[{\bf (i)}] If  $\mathbf{D}_{t}^{\theta}w_2$ belongs  either to $\C([0,T])$ or to
$ L_2(0,T)$, 
then  there is the equality
\begin{equation*}
\mathbf{D}_{t}^{\theta}(w_1 w_2)=w_1(t)\mathbf{D}_{t}^{\theta} w_2(t)+w_2(0)\mathbf{D}_{t}^{\theta} w_1(t)
+\frac{\theta}{\Gamma(1-\theta)}\mathfrak{I}_{\theta}(t)
\end{equation*}
with
$$
\mathfrak{I}_{\theta}(t)=\mathfrak{I}_{\theta}(t;w_1,w_2)=
\int\limits_{0}^{t}\frac{[w_1(t)-w_1(s)][w_2(s)-w_2(0)]}{(t-s)^{1+\theta}}ds.
$$
Besides,  the estimates hold
\begin{align*}
\|\mathbf{D}_{t}^{\theta}(w_1 w_2)\|_{\C([0,T])}&\leq C[\|\mathbf{D}_{t}^{\theta}w_2\|_{\C([0,T])}+\|w_2\|_{\C([0,T])}],\\
\|\mathfrak{I}_{\theta}(t)\|_{ L_2(0,T)}&\leq C\|w_2-w_2(0)\|_{L_2(0,T)},\\
\|\mathbf{D}_{t}^{\theta}(w_1 w_2)\|_{ L_2(0,T)}&\leq C[\|\mathbf{D}_{t}^{\theta}w_2\|_{ L_2(0,T)}+\|w_2-w_2(0)\|_{L_2(0,T)}],
\end{align*}
where the positive constant $C$ depends only on $T$, $\theta$ and the norm of the function $w_1$.
 \item[{\bf (ii)}] If $\partial_{t}^{\theta}w_2\in\C([0,T])$,
 then for any $\theta_{1}\geq \theta$ and all $t\in[0,T]$ the equality holds
\begin{align*}
I_{t}^{\theta_{1}}(w_1\partial_{t}^{\theta}w_2)(t)&=I_{t}^{\theta_1-\theta}(w_1w_2)(t)-w_2(0)[I_{t}^{\theta_1-\theta}w_1(t)-I_{t}^{\theta_1}(w_1\omega_{1-\theta})(t)]\\
&
-\theta I_{t}^{1+\theta_1-\theta}(\mathcal{W}(w_1)w_2)(t)
\end{align*}
with
\[
\mathcal{W}(w_1)=\mathcal{W}(w_1;t,\tau)=\int\limits_{0}^{1}\frac{\partial w_1}{\partial z}(z)ds,\quad z=st+(1-s)(\tau),\quad 0<\tau<t<T.
\]
  	\end{enumerate}
\end{proposition}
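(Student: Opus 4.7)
The plan for part (i) is to work directly from the Caputo definition
\[
\mathbf{D}_t^\theta(w_1w_2)(t)=\frac{1}{\Gamma(1-\theta)}\frac{\partial}{\partial t}\int_0^t \frac{w_1(s)w_2(s)-w_1(0)w_2(0)}{(t-s)^\theta}\,ds,
\]
rewriting the numerator via the algebraic identity
\[
w_1(s)w_2(s)-w_1(0)w_2(0)=w_1(t)[w_2(s)-w_2(0)]+w_2(0)[w_1(s)-w_1(0)]-[w_1(t)-w_1(s)][w_2(s)-w_2(0)].
\]
The middle piece immediately yields $w_2(0)\mathbf{D}_t^\theta w_1(t)$. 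In the first piece, the factor $w_1(t)$ can be extracted from the $s$-integral, so the product rule in $t$ produces $w_1(t)\mathbf{D}_t^\theta w_2(t)$ plus an excess term $\tfrac{w_1'(t)}{\Gamma(1-\theta)}\int_0^t(t-s)^{-\theta}[w_2(s)-w_2(0)]\,ds$. The third piece has integrand vanishing at $s=t$ (since $w_1\in\C^1$ forces $w_1(t)-w_1(s)=O(t-s)$), so Leibniz's rule applies: differentiating $(t-s)^{-\theta}$ gives $\theta\mathfrak{I}_\theta/\Gamma(1-\theta)$, whereas differentiating the factor $w_1(t)$ in $w_1(t)-w_1(s)$ contributes a term that \emph{exactly cancels} the excess from the first piece. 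This leaves the asserted identity.

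The sup- and $L_2$-estimates then follow from routine bounds: $\mathbf{D}_t^\theta w_1$ is controlled by $\|w_1\|_{\C^1([0,T])}$ since $w_1\in\C^1$; the bound $|w_1(t)-w_1(s)|\leq\|w_1'\|_\infty(t-s)$ reduces $\mathfrak{I}_\theta$ to the convolution of $(t-s)^{-\theta}$ with $|w_2-w_2(0)|$; and $|w_2(t)-w_2(0)|$ is controlled by $\|\mathbf{D}_t^\theta w_2\|$ via the Caputo inversion $w_2(t)-w_2(0)=I_t^\theta\mathbf{D}_t^\theta w_2(t)$. The $L_2$-estimate for $\mathfrak{I}_\theta$ follows from Young's convolution inequality with the $L_1$-kernel $(t-s)^{-\theta}\in L_1(0,T)$.

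For part (ii), I would begin by converting $\partial_t^\theta w_2$ to a Caputo derivative via \eqref{5.2}, so that
\[
I_t^{\theta_1}(w_1\partial_t^\theta w_2)(t)=I_t^{\theta_1}(w_1\mathbf{D}_t^\theta w_2)(t)+w_2(0)\,I_t^{\theta_1}(w_1\omega_{1-\theta})(t).
\]
Substituting the identity from part (i) for $w_1\mathbf{D}_t^\theta w_2$ and applying $I_t^{\theta_1}$, the semigroup relation $I_t^{\theta_1}\mathbf{D}_t^\theta g=I_t^{\theta_1-\theta}[g-g(0)]$ (valid for $\theta_1\geq\theta$) rewrites the two Caputo-derivative terms in closed form, and the resulting $w_1(0)w_2(0)\omega_{1+\theta_1-\theta}(t)$ contributions cancel in pairs. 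What remains is the transformation of $\tfrac{1}{\Gamma(1-\theta)}I_t^{\theta_1}\mathfrak{I}_\theta$ into the desired $\theta I_t^{1+\theta_1-\theta}(\mathcal{W}(w_1)w_2)$ form: the crucial observation is the factorization $w_1(\tau)-w_1(\sigma)=(\tau-\sigma)\,\mathcal{W}(w_1;\tau,\sigma)$, which absorbs one factor of $(\tau-\sigma)$ from the denominator of $\mathfrak{I}_\theta$ and produces the kernel $\omega_{1-\theta}(\tau-\sigma)$. Applying Fubini's theorem to swap the $\tau$- and $\sigma$-integrations and combining the two resulting Riemann--Liouville kernels via $\omega_{\theta_1}*\omega_{1-\theta}=\omega_{1+\theta_1-\theta}$ from Proposition \ref{p5.1}(ii) should identify the integral with a multiple of $I_t^{1+\theta_1-\theta}(\mathcal{W}(w_1)w_2)$. \textbf{The delicate step} is this last identification: after Fubini, the intermediate-variable factor $\mathcal{W}(w_1;\tau,\sigma)$ appears where the statement uses the outer-variable form $\mathcal{W}(w_1;t,\tau)$, so a careful inner change of variables---essentially integrating out the intermediate $\tau$ and reassembling the result as a single convolution against $\omega_{1+\theta_1-\theta}(t-\tau)$---is required to reconcile the two expressions, to produce the overall coefficient $\theta$, and to verify which residual initial-value contributions are absorbed into the terms $w_2(0)[I_t^{\theta_1-\theta}w_1-I_t^{\theta_1}(w_1\omega_{1-\theta})]$ already present on the right-hand side.
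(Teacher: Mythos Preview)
Your proposal is correct and follows essentially the same route as the paper. For part~(i), the paper uses the identical algebraic splitting of $w_1(s)w_2(s)-w_1(0)w_2(0)$, obtains the same intermediate expression
\[
\mathbf{D}_{t}^{\theta}(w_1w_2)=w_1(t)\mathbf{D}_{t}^{\theta}w_2+w_{2}(0)\mathbf{D}_{t}^{\theta}w_1
+w'_1(t)I_{t}^{1-\theta}(w_2-w_2(0))(t)
-\frac{1}{\Gamma(1-\theta)}\int_{0}^{t}[w_2(s)-w_2(0)]\frac{\partial}{\partial t}\frac{[w_1(t)-w_1(s)]}{(t-s)^{\theta}}ds,
\]
and then differentiates the last integral to produce the cancellation with the $w_1'(t)$-term and the coefficient $\theta/\Gamma(1-\theta)$ on $\mathfrak{I}_\theta$---exactly as you describe. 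The estimates are dismissed in the paper in one line (``simple consequence of the representation \ldots\ and the Young inequality to convolution''), so your more explicit treatment via the bound $|w_1(t)-w_1(s)|\leq\|w_1'\|_\infty(t-s)$ and Young's inequality is actually more detailed than what the paper supplies. For part~(ii), the paper's entire proof is a single sentence pointing to \eqref{5.2}, part~(i), a standard reference, and the semigroup property of $I_t^{\theta}$---precisely the plan you outline; the Fubini/identification step you flag as ``delicate'' is the computational detail the paper simply omits.
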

\begin{proof}
As for the representation of $\mathbf{D}_{t}^{\theta}(w_1w_2)$ stated in (i) of this claim, it follows from the definition of Caputo fractional derivative and smoothness of $w_1$ and $w_2$. Namely, we have
\begin{align*}
\mathbf{D}_{t}^{\theta}(w_1w_2)&=w_1(t)\mathbf{D}_{t}^{\theta}w_2+w_{2}(0)\mathbf{D}_{t}^{\theta}w_1
+w'_1(t)I_{t}^{1-\theta}(w_2-w_2(0))(t)\\
&
-
\frac{1}{\Gamma(1-\theta)}\int_{0}^{t}[w_2(s)-w_2(0)]\frac{\partial}{\partial t}\frac{[w_1(t)-w_1(s)]}{(t-s)^{\theta}}ds.
\end{align*}
Performing differentiation in the last integral arrives at the desired representation. 

Concerning the regularity of $\mathbf{D}_{t}^{\theta}(w_1w_2)$ and $\mathfrak{I}_{\theta}(t)$, they are simple consequence of the representation to $\mathbf{D}_{t}^{\theta}(w_1w_2)$, and of the properties of $w_1,w_2,$ and the Young inequality to convolution.

In fine, point (ii) of this proposition follows from  \eqref{5.2} and point (i) of this claim, if one takes into account \cite[Proposition 2.2]{KST} and semigroup property of the fractional Riemann-Liouville integral.
\end{proof}
We now state and prove some inequalities that will be needed to obtain a priori estimates of solutions to \eqref{1.1}-\eqref{1.3} in Section \ref{s6.5}.

First, for each positive fixed $T_3$ and $T$, $0<T_3<T$, we introduce the function
\begin{equation}\label{5.2***}
\xi=\xi(t)\in\C_{0}^{\infty}(\R_{+}),\quad \xi\in[0,1],\quad 
\xi=\begin{cases}
1,\quad t\in[0,T_3/2],\\
0,\quad t\geq 3T_3/4.
\end{cases}
\end{equation}
\begin{lemma}\label{l5.0}
Let $T,T_3$ be arbitrary fixed, $T>T_3>0$, $\theta\in(0,1)$ and $\theta_1\in(0,\theta]$. Then for any $w\in\C^{2+\alpha,\frac{2+\alpha}{2}\theta}(\bar{\Omega}_{T_3})$ and $w_1\in\C^{1}([0,T])$ the equalities hold:
\begin{enumerate}
\item[{ (i)}] $\|\xi w\|_{\C([0,T],\C^{1}(\bar{\Omega}))}\leq C\|w\|_{\C([0,T_{3}],\C^{1}(\bar{\Omega}))};$

	\smallskip
\item[{ (ii)}]$\sum_{j=1}^{2}\big[\big\|\xi\frac{\partial^{j}w}{\partial x^{j}}\big\|_{\C^{\alpha,\frac{2+\alpha-j}{2}\theta}(\bar{\Omega}_{T})}
	+\big\|\mathcal{K}*\xi\frac{\partial^{j}w}{\partial x^{j}}\big\|_{\C^{\alpha,\frac{\theta\alpha}{2}}(\bar{\Omega}_{T})}\big]
	\leq C\|w\|_{\C^{2+\alpha,\frac{2+\alpha}{2}\theta}(\bar{\Omega}_{T_3})};
	$
	
	\smallskip
	\item[{ (iii)}] $\langle\xi w\rangle^{(\theta/2)}_{t,\Omega_{T}}\leq C[\langle w\rangle^{(\theta/2)}_{t,\Omega_{T_{3}}}+\|w\|_{\C(\bar{\Omega}_{T_3})}];$
	
	\smallskip
	\item[{ (iv)}] $\|\mathfrak{I}_{\theta_1}(t;\xi,w_1w)\|_{\C^{\alpha,\frac{\theta\alpha}{2}}(\bar{\Omega}_{T})}\leq
	C\|w_1\|_{\C^{1}([0,T])}[\|w\|_{\C([0,T_3],\C^{\alpha}(\bar{\Omega}))}+\langle w\rangle^{(\theta/2)}_{t,\Omega_{T_{3}}}];$
	
	\smallskip
	\item[{ (v)}] $\|\mathbf{D}_{t}^{\theta_1}(\xi ww_1)\|_{\C^{\alpha,\frac{\theta\alpha}{2}}(\bar{\Omega}_{T})}\leq C[1+\|w_1\|_{\C^{1}([0,T])}][\|\mathbf{D}_{t}^{\theta_1}w\|_{\C^{\alpha,\frac{\theta\alpha}{2}}(\bar{\Omega}_{T_{3}})}+\|w\|_{\C^{\alpha,\frac{\theta\alpha}{2}}(\bar{\Omega}_{T_{3}})}+\|w_0\|_{\C^{\alpha}(\bar{\Omega})}];$
	
\noindent $\|\mathbf{D}_{t}^{\theta_1}(\xi ww_1)-\xi\mathbf{D}_{t}^{\theta_1}(ww_1)\|_{\C^{\alpha,\frac{\theta\alpha}{2}}(\bar{\Omega}_{T})}\leq C[1+\|w_1\|_{\C^{1}([0,T])}][\langle w\rangle_{t,\Omega_{T_{3}}}^{(\frac{\theta}{2})}+\|w\|_{\C([0,T_3],\C^{\alpha}(\bar{\Omega}))}+\|w_0\|_{\C^{\alpha}(\bar{\Omega})}],$
	where $w_0=w(x,0);$
		
	\smallskip
	\item[{ (vi)}] $\|\mathbf{D}_{t}^{\theta_1}(\xi w)\|_{L_2(0,T)}+\|\xi w\|_{L_2((0,T),W^{2,2}(\Omega))}\leq C[\|\mathbf{D}_{t}^{\theta_1}(w)\|_{L_2(0,T_3)}+\|w\|_{L_2((0,T_3),W^{2,2}(\Omega))}]$.
	\end{enumerate}
	Here the positive constant $C$ depends only on $T,\theta,\theta_1$ and the norm of the function $\xi$.
\end{lemma}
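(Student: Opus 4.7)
\medskip\noindent\textbf{Proof proposal for Lemma \ref{l5.0}.}
The strategy is to exploit three structural features of the cutoff $\xi$: it is $C^{\infty}$ (so $\xi$ and all its derivatives are bounded), it is Lipschitz (so $|\xi(t)-\xi(s)|\le \|\xi'\|_{\infty}|t-s|$), and it is supported in $[0,3T_{3}/4]\subsetneq[0,T_{3}]$ (so sup norms over $[0,T]$ immediately reduce to sup norms over $[0,T_{3}]$). The general pattern throughout is: (a) isolate a piece in which $\xi$ only multiplies $w$ (or its derivatives) and use that $\xi$ is a bounded multiplier on $C^{\alpha}$ in $x$ with the support automatically cutting $t$ down to $[0,T_{3}]$; and (b) pay for each commutator with $\xi$ by one power of $|t-s|$ coming from its Lipschitz regularity, which lowers the singular weight $(t-s)^{-1-\theta_{1}}$ by one degree and makes it integrable against $C^{\alpha/2\theta}$-moduli of $w$.

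Items (i)–(iii) are direct. For (i), $|\xi w|\le|w|$ and $|\partial_{x}(\xi w)|\le|\partial_{x}w|$ on the support of $\xi\subset[0,T_{3}]$. For (ii), since $\xi=\xi(t)$ does not depend on $x$, the $x$-Hölder seminorm of $\xi\,\partial_{x}^{j}w$ equals $|\xi|\cdot\langle \partial_{x}^{j}w\rangle_{x}^{(\alpha)}$; for the time seminorm one splits $\xi(t_{1})\partial_{x}^{j}w(x,t_{1})-\xi(t_{2})\partial_{x}^{j}w(x,t_{2})=\xi(t_{1})[\partial_{x}^{j}w(x,t_{1})-\partial_{x}^{j}w(x,t_{2})]+[\xi(t_{1})-\xi(t_{2})]\partial_{x}^{j}w(x,t_{2})$ and uses the Lipschitz bound on $\xi$ together with $|t_{1}-t_{2}|^{1-\frac{2+\alpha-j}{2}\theta}\le T^{1-\frac{2+\alpha-j}{2}\theta}$. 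The convolution pieces $\mathcal{K}\ast\xi\partial_{x}^{j}w$ are then estimated by the Young-type inequality relying on $|\mathcal{K}(t)|\le C t^{-\beta}$ with $\beta\in(0,1-\nu]$ (assumption \textbf{h4}). Item (iii) follows from the same splitting argument as in (ii), applied to $w$ itself.

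Item (iv) is the key computational lemma and is proved by brute force on
\[
\mathfrak{I}_{\theta_{1}}(t;\xi,w_{1}w)=\int_{0}^{t}\frac{[\xi(t)-\xi(s)][w_{1}(s)w(x,s)-w_{1}(0)w(x,0)]}{(t-s)^{1+\theta_{1}}}\,ds.
\]
Using $|\xi(t)-\xi(s)|\le C|t-s|$ and decomposing $w_{1}(s)w(x,s)-w_{1}(0)w(x,0)=w_{1}(s)[w(x,s)-w(x,0)]+[w_{1}(s)-w_{1}(0)]w(x,0)$, one bounds the integrand by $C\|w_{1}\|_{C^{1}}(t-s)^{-\theta_{1}}\bigl(\langle w\rangle_{t,\Omega_{T_{3}}}^{(\theta/2)}s^{\theta/2}+s\|w\|_{C(\bar\Omega_{T_{3}})}\bigr)$; integrating against $(t-s)^{-\theta_{1}}$ on $[0,t]\cap[0,3T_{3}/4]$ gives a uniform $C(\bar\Omega_{T})$ bound. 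The $x$-Hölder seminorm of $\mathfrak{I}_{\theta_{1}}$ is treated identically after subtracting at two points $x_{1},x_{2}$ (only $w$ depends on $x$, so the difference operation lands inside the $w$ factors), and the $t$-Hölder seminorm is produced by splitting $\mathfrak{I}_{\theta_{1}}(t_{1})-\mathfrak{I}_{\theta_{1}}(t_{2})$ over $[t_{2},t_{1}]$ versus $[0,t_{2}]$ in the standard way.

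For (v), I apply Proposition \ref{p5.1*}(i) with $w_{1}\to\xi$, $w_{2}\to ww_{1}$ to write
\[
\mathbf{D}_{t}^{\theta_{1}}(\xi w w_{1})=\xi\,\mathbf{D}_{t}^{\theta_{1}}(ww_{1})+(ww_{1})(x,0)\mathbf{D}_{t}^{\theta_{1}}\xi+\tfrac{\theta_{1}}{\Gamma(1-\theta_{1})}\mathfrak{I}_{\theta_{1}}(t;\xi,ww_{1}),
\]
then apply Proposition \ref{p5.1*}(i) again to $\mathbf{D}_{t}^{\theta_{1}}(ww_{1})=w_{1}\mathbf{D}_{t}^{\theta_{1}}w+w(x,0)\mathbf{D}_{t}^{\theta_{1}}w_{1}+\tfrac{\theta_{1}}{\Gamma(1-\theta_{1})}\mathfrak{I}_{\theta_{1}}(t;w_{1},w)$. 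The first term on the right of the first display is controlled by $(1+\|w_{1}\|_{C^{1}})[\|\mathbf{D}_{t}^{\theta_{1}}w\|_{C^{\alpha,\frac{\theta\alpha}{2}}(\bar\Omega_{T_{3}})}+\|w\|_{C^{\alpha,\frac{\theta\alpha}{2}}(\bar\Omega_{T_{3}})}]$ via Proposition \ref{p5.1*}(i) and items (ii)–(iii); the second is controlled by $\|w_{0}\|_{C^{\alpha}}\|\mathbf{D}_{t}^{\theta_{1}}\xi\|_{C^{\alpha/2\theta}([0,T])}$, which is finite because $\xi\in C^{\infty}$ with compact support; the third is controlled by (iv). The commutator formula $\mathbf{D}_{t}^{\theta_{1}}(\xi ww_{1})-\xi\mathbf{D}_{t}^{\theta_{1}}(ww_{1})=(ww_{1})(x,0)\mathbf{D}_{t}^{\theta_{1}}\xi+\tfrac{\theta_{1}}{\Gamma(1-\theta_{1})}\mathfrak{I}_{\theta_{1}}(t;\xi,ww_{1})$ is exactly what is estimated by (iv), \emph{without} the need for $\|\mathbf{D}_{t}^{\theta_{1}}w\|$, and this is what allows the sharper bound in the second line of (v). For (vi), the same decomposition with $w_{1}\equiv 1$ gives $\mathbf{D}_{t}^{\theta_{1}}(\xi w)=\xi\mathbf{D}_{t}^{\theta_{1}}w+w(x,0)\mathbf{D}_{t}^{\theta_{1}}\xi+\tfrac{\theta_{1}}{\Gamma(1-\theta_{1})}\mathfrak{I}_{\theta_{1}}(t;\xi,w)$, whose $L_{2}$ norm is controlled via Young's convolution inequality and the second bound of Proposition \ref{p5.1*}(i). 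The $\|\xi w\|_{L_{2}((0,T),W^{2,2}(\Omega))}$ part is immediate since $|\xi|\le 1$ and $\mathrm{supp}\,\xi\subset[0,T_{3}]$. The principal technical obstacle is the second inequality in (v): obtaining a bound on the commutator that is \emph{independent} of $\|\mathbf{D}_{t}^{\theta_{1}}w\|$, which is precisely what the Lipschitz regularity of $\xi$ buys in the analysis of $\mathfrak{I}_{\theta_{1}}$.
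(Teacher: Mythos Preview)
Your proposal is correct and follows essentially the same approach as the paper: items (v) and (vi) are reduced to (iv) via Proposition~\ref{p5.1*}(i), items (i)--(iii) exploit the support and Lipschitz properties of $\xi$ exactly as you describe, and item (iv) is obtained by using $|\xi(t)-\xi(s)|\le C|t-s|$ to lower the singularity to $(t-s)^{-\theta_1}$ and then performing a near/far splitting for the time-H\"older seminorm. The only difference is that the paper carries out the time-H\"older estimate in (iv) more explicitly, via a case analysis on the location of $t_1,t_2$ relative to $3T_3/4$ (with a further sub-case when both exceed $3T_3/4$), which is precisely the content hidden behind your phrase ``in the standard way.''
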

\begin{proof}
It is worth noting that the points (v) and (vi) are simple consequences of Proposition \ref{p5.1*} and the inequality in (iv) of this claim. Thus, to complete the proof of this lemma, we are left to verify estimates in (i)-(iv).

The definition of the function $\xi(t)$ and the regularity of $w(x,t)$ arrive at the inequality
\[
\|\xi w\|_{\C([0,T],\C^{2+\alpha}(\bar{\Omega}))}\leq C\|w\|_{\C([0,T_3],\C^{2+\alpha}(\bar{\Omega}))},
\]
which in turn proves the point (i) of this claim. Besides, this bound tells us that the verification of the estimate in (ii) will immediately follow from the inequality
\begin{equation}\label{10.1}
\sum_{j=1}^{2}\bigg\langle\xi\frac{\partial^{j}w}{\partial x^{j}}\bigg\rangle_{t,\Omega_{T}}^{(\frac{2+\alpha-j}{2}\theta)}
\leq C 
\sum_{j=1}^{2}\bigg[\bigg\langle\frac{\partial^{j}w}{\partial x^{j}}\bigg\rangle_{t,\Omega_{T_{3}}}^{(\frac{2+\alpha-j}{2}\theta)}+
\bigg\|\frac{\partial^{j}w}{\partial x^{j}}\bigg\|_{\C(\bar{\Omega}_{T_{3}})}
\bigg].
\end{equation}
For simplicity consideration, we first assume $0<t_1<t_2<T$, and set $\Delta t=t_2-t_1$. Then, we discuss three options to the arrangement of  $t_1$ and $t_2$:
\begin{align}\label{10.5}\notag
&\text{either}\quad 0<t_1<t_2\leq 3T_{3}/4,\\\notag
&\text{or}\quad 0<t_1\leq 3T_{3}/4<t_2\leq T,\\
&\text{or}\quad 0<3T_{3}/4<t_1<t_2\leq T.
\end{align}
In the first case, we have
\[
\bigg|\xi(t_2)\frac{\partial^{j}w}{\partial x^{j}}(x,t_2)-\xi(t_1)\frac{\partial^{j}w}{\partial x^{j}}(x,t_1)\bigg|\leq|\xi(t_2)-\xi(t_1)|\bigg\|\frac{\partial^{j}w}{\partial x^{j}}\bigg\|_{\C(\bar{\Omega}_{T_3})}+
\xi(t_1)\bigg|\frac{\partial^{j}w}{\partial x^{j}}(x,t_2)-\frac{\partial^{j}w}{\partial x^{j}}(x,t_1)\bigg|.
\]
Collecting this estimate with smoothness of the functions $w$ and $\xi$, we immediately obtain  \eqref{10.1}.

Coming to the second case, i.e. $t_1\leq \frac{3T_3}{4}<t_2$,  the easily verified inequalities 
\begin{align*}
\Delta t&>\frac{3T_3}{4}-t_1,\\
\xi(t_2)\frac{\partial^{j}w}{\partial x^{j}}(x,t_2)-\xi(t_1)\frac{\partial^{j}w}{\partial x^{j}}(x,t_1)&=
\xi\bigg(\frac{3T_3}{4}\bigg)\frac{\partial^{j}w}{\partial x^{j}}\bigg(x,\frac{3T_3}{4}\bigg)-\xi(t_1)\frac{\partial^{j}w}{\partial x^{j}}(x,t_1),
\end{align*}
provide the desired bound \eqref{10.1}.

In the last case in \eqref{10.5}, thanks to $\xi(t)=0$ for $t>\frac{3T_3}{4}$,  we have
\[
\xi(t_2)\frac{\partial^{j}w}{\partial x^{j}}(x,t_2)-\xi(t_1)\frac{\partial^{j}w}{\partial x^{j}}(x,t_1)=0,
\]
which means that \eqref{10.1} holds.

As a result, gathering  all estimates, we complete the proof of  \eqref{10.1} and, besides,
\begin{equation*}\label{10.2}
\sum_{j=1}^{2}\bigg\|\xi\frac{\partial^{j}w}{\partial x^{j}}\bigg\|_{\C^{\alpha,\frac{2+\alpha-j}{2}\theta}(\bar{\Omega}_{T})}
\leq C\|w\|_{\C^{2+\alpha,\frac{2+\alpha}{2}\theta}(\bar{\Omega}_{T_{3}})}.
\end{equation*}
Therefore, in order  to complete the verification of the  estimate in (ii), we first take advantage of the representation
\[
\mathcal{K}*\xi\frac{\partial^{j}w}{\partial x^{j}}=\begin{cases}
\int_{0}^{t}\mathcal{K}(t-s)\xi(s)\frac{\partial^{j}w}{\partial x^{j}}(x,s)ds,\qquad 0<t<\frac{3T_3}{4},\\
\,\\
\int_{0}^{\frac{3T_3}{4}}\mathcal{K}(t-s)\xi(s)\frac{\partial^{j}w}{\partial x^{j}}(x,s)ds,\qquad t\geq\frac{3T_3}{4},
\end{cases}
\]
and  \cite[Lemma 4.1]{KPV1}, then, performing standard calculations, we conclude that
\[
\bigg\|\mathcal{K}*\xi\frac{\partial^{j}w}{\partial x^{j}}\bigg\|_{\C^{\alpha,\frac{\alpha\theta}{2}}(\bar{\Omega}_{T})}
\leq C\|\mathcal{K}\|_{L_1(0,T)}\bigg\|\frac{\partial^{j}w}{\partial x^{j}}\bigg\|_{\C^{\alpha,\frac{\alpha\theta}{2}}(\bar{\Omega}_{T_{3}})}.
\]
Thus, the proof of the estimate in (ii) is finished.

Concerning the inequalities in (iii), they are obtained with the  arguments leading to \eqref{10.1}.

At this point we examine the bound in (iv). Here we restrict ourselves with the consideration of the case $\theta_1=\theta$. Another case is verified with the similar manner.

In light of the definition  of $\xi(t)$ (see \eqref{5.2***}) and $\mathfrak{I}_{\theta}(t)$, we deduce
\begin{align}\label{10.3}\notag
\mathfrak{I}_{\theta}(t)&=\begin{cases}
0,\qquad\qquad t\leq T_3/2,\\
\,\\
\int_{0}^{t}\frac{[\xi(t)-\xi(\tau)][w(x,\tau)w_1(\tau)-w(x,0)w_1(0)]d\tau}{(t-\tau)^{1+\theta}},\quad t\in(T_3/2, 3T_3/4),\\
\,\\
\int_{0}^{3T_3/4}\frac{[\xi(t)-\xi(\tau)][w(x,\tau)w_1(\tau)-w(x,0)w_1(0)]d\tau}{(t-\tau)^{1+\theta}},\quad t\geq 3T_3/4,
\end{cases}\\
&
=\begin{cases}
0,\qquad\qquad t\leq T_3/2,\\
\,\\
\int_{0}^{3T_3/4}\frac{\mathcal{W}(\xi;t,\tau)[w(x,\tau)-w(x,0)]w_1(\tau)d\tau}{(t-\tau)^{\theta}}
+\int_{0}^{3T_3/4}\frac{\mathcal{W}(\xi;t,\tau)[w_1(\tau)-w_{1}(0)]w(x,0)d\tau}{(t-\tau)^{\theta}}
,\quad t\in(T_3/2, 3T_3/4),\\
\, \\
\int_{0}^{t}\frac{\mathcal{W}(\xi;t,\tau)[w(x,\tau)-w(x,0)]w_1(\tau)d\tau}{(t-\tau)^{\theta}}
+\int_{0}^{t}\frac{\mathcal{W}(\xi;t,\tau)[w_1(\tau)-w_{1}(0)]w(x,0)d\tau}{(t-\tau)^{\theta}}
,\quad t\geq 3T_3/4,
\end{cases}
\end{align}
where the function $\mathcal{W}(\cdot)$ is defined in point (ii) of Proposition \ref{p5.1*}.

Taking into account these equalities and performing standard technical  calculations, we end up with the estimate
\begin{equation}\label{10.4}
\|\mathfrak{I}_{\theta}(t)\|_{\C([0,T],\C^{\alpha}(\bar{\Omega}))}\leq C\|\xi\|_{\C^{1}([0,T])}\|w_1\|_{\C^{1}([0,T])}
[\|w\|_{\C([0,T_3],\C^{\alpha}(\bar{\Omega}))}+\langle w\rangle_{t,\Omega_{T_3}}^{(\theta/2)}].
\end{equation}
As for the H\"{o}lder regularity of $\mathfrak{I}_{\theta}(t)$ with respect to time, we first assume that
\begin{equation}\label{10.6}
\Delta t=t_2-t_1<T_3/8,
\end{equation}
otherwise the bound of $\langle\mathfrak{I}_{\theta}(t)\rangle_{t,\Omega_{T}}^{(\alpha\theta/2)}$ follows from \eqref{10.4}.

Next we consider again  \eqref{10.5} to the location  of $t_1,t_2$. If $t_1,t_2\in[0,3T_3/4]$, then   using \eqref{10.3}, we can write
\[
\mathfrak{I}_{\theta}(t_2)-\mathfrak{I}_{\theta}(t_1)\equiv\sum_{i=1}^{3}J_{i},
\] 
where we set
\begin{align*}
J_1&=\int_{0}^{t_1}\frac{\mathcal{W}(\xi;t_2,t_2-\tau)[w(x,t_2-\tau)w_1(t_2-\tau)-w(x,t_1-\tau)w_1(t_1-\tau)]d\tau}{\tau^{\theta}},\\
J_2&=\int_{0}^{t_1}\frac{[\mathcal{W}(\xi;t_2,t_2-\tau)-\mathcal{W}(\xi;t_1,t_1-\tau)][w(x,t_1-\tau)w_1(t_1-\tau)-w(x,0)w_1(0)]d\tau}{\tau^{\theta}},\\
J_3&=\int_{t_1}^{t_2}\frac{\mathcal{W}(\xi;t_2,t_2-\tau)[w(x,t_2-\tau)w_1(t_2-\tau)-w(x,0)w_1(0)]d\tau}{\tau^{\theta}}.
\end{align*}
According to the properties of  $\xi,w$ and $w_1$, we immediately deduce
\[
|J_1|+|J_2|\leq C\Delta t^{\theta/2}\|\xi\|_{\C^{2}([0,T])}\|w_1\|_{\C^{1}([0,T])}
[\|w\|_{\C(\bar{\Omega}_{T_3})}+\langle w\rangle_{t,\Omega_{T_{3}}}^{(\theta/2)}].
\]
As for $J_3$, we have
\begin{align*}
|J_3|&\leq C \|\xi\|_{\C^{1}([0,T])}\|w_1\|_{\C^{1}([0,T])}
[\|w\|_{\C(\bar{\Omega}_{T_3})}+\langle w\rangle_{t,\Omega_{T_{3}}}^{(\theta/2)}]
\int_{t_1}^{t_2}\tau^{-\theta}(t_2-\tau)^{\theta/2}[1+(t_2-\tau)^{1-\theta/2}]d\tau\\
&
\leq 
C\Delta t^{\theta/2}\|\xi\|_{\C^{2}([0,T])}\|w_1\|_{\C^{1}([0,T])}
[\|w\|_{\C(\bar{\Omega}_{T_3})}+\langle w\rangle_{t,\Omega_{T_{3}}}^{(\theta/2)}].
\end{align*}
It is worth noting that the positive constant $C$ depends only on $T$ and $\theta$. Collecting these estimates arrives at the bound
\begin{equation}\label{10.7}
\langle\mathfrak{I}_{\theta}(t)\rangle_{t,\Omega_{T}}^{(\theta/2)}\leq C \|\xi\|_{\C^{2}([0,T])}\|w_1\|_{\C^{1}([0,T])}
[\|w\|_{\C(\bar{\Omega}_{T_3})}+\langle w\rangle_{t,\Omega_{T_{3}}}^{(\theta/2)}].
\end{equation}
Concerning the case of $t_{1}\leq\frac{3T_3}{4}<t_2<T$, assumption \eqref{10.6} tells us that
\[
t_2<\frac{7T_3}{8}<T_3.
\]
Hence, recasting the arguments leading to \eqref{10.7}, we obtain the desired estimate in the case of the second option in \eqref{10.3}. Finally, analyzing the case $\frac{3T_3}{4}\leq t_1<t_2\leq T$, two possibilities occur:
\begin{enumerate}
\item[{ (i)}] either $t_1-\frac{3T_3}{4}\leq\frac{T_3}{8};$
\item[{ (ii)}] or  $t_1-\frac{3T_3}{4}>\frac{T_3}{8}.$
\end{enumerate}
It is apparent that the option (i) is studied with the similar arguments leading to \eqref{10.7}. As for the case (ii), exploiting \eqref{10.3}, we have
\[
\mathfrak{I}_{\theta}(t_2)-\mathfrak{I}_{\theta}(t_1)\equiv i_1+i_2,
\]
where
\begin{align*}
i_1&=\int_{0}^{3T_3/4}[w(x,\tau)w_1(\tau)-w(x,0)w_1(0)]\frac{\mathcal{W}(\xi;t_2,\tau)-\mathcal{W}(\xi;t_1,\tau)}{(t_2-\tau)^{\theta}}d\tau,\\
i_2&=\int_{0}^{3T_3/4}[w(x,\tau)w_1(\tau)-w(x,0)w_1(0)]\mathcal{W}(\xi;t_2,\tau)[(t_2-\tau)^{-\theta}-(t_1-\tau)^{-\theta}]d\tau.
\end{align*}
According to  the regularity of $\xi(t)$, we arrive at
\[
|i_1|\leq C\Delta t\|w_1\|_{\C^{1}([0,T])}\|\xi\|_{\C^{2}(\bar{\Omega}_{T})}[\|w\|_{\C(\bar{\Omega}_{T_3})}+\langle w\rangle_{t,\Omega_{T_{3}}}^{(\theta/2)}]
\int_{0}^{3T_3/4}[\tau+\tau^{\theta/2}](t_2-\tau)^{-\theta}d\tau.
\]
To estimate the term $i_2$, we use the mean value theorem and the fact that $t_1-\frac{3T_3}{4}>\frac{T_3}{8}$. Thus, we achieve
\[
|i_2|\leq C\frac{\Delta t}{T_3^{2\theta}}\|\xi\|_{\C^{1}([0,T])}\|w_1\|_{\C^{1}([0,T])}[\|w\|_{\C(\bar{\Omega}_{T_3})}+\langle w\rangle_{t,\Omega_{T_{3}}}^{(\theta/2)}].
\]
As a result, gathering this inequality with the estimate of $i_1$, we end up with bound \eqref{10.7} in the third case in \eqref{10.5}. Finally, \eqref{10.4} and \eqref{10.7} completes the proof of point (iv) in this lemma.
\end{proof}

Next, we state and prove inequalities which are generalized the bounds in \cite[Lemma 4.2]{KPV2} and will be used later in this art. 
\begin{lemma}\label{l5.1}
Let $w_2=w_{2}(x,t)\in\C^{2+\alpha,\frac{2+\alpha}{2}\theta}(\bar{\Omega}_{T}),$ $\theta\in(0,1)$, and let $w_1=w_1(t)\in\C^{1}([0,T])$ be a positive function. We assume that
\[
\text{either}\quad w_2|_{\partial\Omega_{T}}=0\quad\text{or}\quad \frac{\partial w_2}{\partial x}|_{\partial\Omega_{T}}=0.
\]
Then for any $0<\theta_2<\theta_1\leq\theta$ and any integer even $p\geq 2$ the inequalities hold:
\begin{enumerate}
 \item[{\bf (i)}] \begin{align*}
	&-pI_{t}^{\theta}\bigg(\int_{\Omega}\partial_{t}^{\theta_1}(w_1w_2)\frac{\partial}{\partial x}\bigg(\frac{\partial w_2}{\partial x_2}\bigg)^{p-1}dx\bigg)(t)\\
	&
	\geq 
	(p-1)I_{t}^{\theta}\Big(w_1\omega_{1-\theta_1}\int_{\Omega}\bigg(\frac{\partial w_2}{\partial x_2}\bigg)^{p}dx\Big)(t)
	+I_{t}^{\theta-\theta_1}\bigg(w_1\int_{\Omega}\bigg(\frac{\partial w_2}{\partial x_2}\bigg)^{p}dx\bigg)(t)\\
	&
	-w_{1}^{p}(0)\int_{\Omega}\bigg(\frac{\partial w_2}{\partial x_2}(x,0)\bigg)^{p}dx[I_{t}^{\theta-\theta_1}(w_1^{1-p})(t)
	-I_{t}^{\theta}(w_{1}^{1-p}\omega_{1-\theta_{1}})(t)]
	\\
	&
	-
	\theta_1I_{t}^{1-\theta_1+\theta}\bigg(w_1^{p}\mathcal{W}(w_1^{1-p})
	\int_{\Omega}\bigg(\frac{\partial w_2}{\partial x_2}\bigg)^{p}dx\bigg)(t) \quad \text{for}\quad \forall t\in[0,T],
	\end{align*}
 \item[{\bf (ii)}]
\begin{align*}
	&-pI_{t}^{\theta}\bigg(\int_{\Omega}\frac{\partial}{\partial t}(\mathcal{N}_{\theta}*w_1w_2)\frac{\partial}{\partial x}\bigg(\frac{\partial w_2}{\partial x_2}\bigg)^{p-1}dx\bigg)(t)
\\
	&\geq 
	(p-1)I_{t}^{\theta}\Big(w_1\mathcal{N}_{\theta}\int_{\Omega}\bigg(\frac{\partial w_2}{\partial x_2}\bigg)^{p}dx\Big)(t)
	+I_{t}^{\theta-\theta_1}\bigg(w_1\int_{\Omega}\bigg(\frac{\partial w_2}{\partial x_2}\bigg)^{p}dx\bigg)(t)\\
	&
		-w_{1}^{p}(0)\int_{\Omega}\bigg(\frac{\partial w_2}{\partial x_2}(x,0)\bigg)^{p}dxI_{t}^{\theta-\theta_1}\bigg(w_1^{1-p}+I_{t}^{\theta_1}(\omega_{1-\theta_1}w_1^{1-p})\bigg)(t)\\
	&
	-
	I_{t}^{\theta-\theta_2}\bigg(w_1\int_{\Omega}\bigg(\frac{\partial w_2}{\partial x_2}\bigg)^{p}dx\bigg)(t)
	-\theta_1I_{t}^{1-\theta_1+\theta}\bigg(w_1^{p}\mathcal{W}(w_1^{1-p})\int_{\Omega}\bigg(\frac{\partial w_2}{\partial x_2}\bigg)^{p}dx\bigg)(t)\\
	&
	+
	\theta_2 I_{t}^{1-\theta_2+\theta}\bigg(w_1^{p}\mathcal{W}(w_1^{1-p})\int_{\Omega}\bigg(\frac{\partial w_2}{\partial x_2}\bigg)^{p}dx\bigg)(t)\quad \text{for}\quad \forall t\in[0,T^{*}],
		\end{align*}
where $\mathcal{N}_{\theta}$ is given with \eqref{5.3} and $\mathcal{W}(\cdot)$ is defined in (ii) of Proposition \ref{p5.1*}.
\end{enumerate}
\end{lemma}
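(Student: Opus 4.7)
The plan is to reduce each inequality to a scalar chain-rule estimate (Proposition \ref{p5.1}(i) for part (i), Corollary \ref{c5.0} for part (ii)) by first integrating by parts in $x$ to exploit the boundary conditions, and then to post-process by applying $I_t^{\theta}$ via the $\partial_t$-integration identity of Proposition \ref{p5.1*}(ii).

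\textbf{Step 1 (integration by parts).} Set $v(x,t)=\partial w_2/\partial x$. Since $\partial_t^{\theta_1}$ and the convolution with $\mathcal{N}_{\theta}$ commute with $\partial/\partial x$ (acting on different variables), integration by parts in $x$ gives
\[
-p\int_{\Omega}\partial_t^{\theta_1}(w_1w_2)\,\frac{\partial}{\partial x}(v^{p-1})\,dx
=p\int_{\Omega}\partial_t^{\theta_1}(w_1v)\,v^{p-1}\,dx
\]
because the boundary term $[\partial_t^{\theta_1}(w_1w_2)\,v^{p-1}]_{\partial\Omega}$ vanishes: under \textbf{DBC} we have $w_2|_{\partial\Omega_T}=0$ so $\partial_t^{\theta_1}(w_1w_2)|_{\partial\Omega}=0$, while under \textbf{NBC} we have $v|_{\partial\Omega}=0$. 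The same reasoning applies in part (ii) to $\frac{d}{dt}(\mathcal{N}_{\theta}*w_1w_2)$.

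\textbf{Step 2 (pointwise chain rule).} Multiply and divide by $w_1^{p-1}(t)$, which is positive and $x$-independent:
\[
p\int_{\Omega}v^{p-1}\partial_t^{\theta_1}(w_1v)\,dx
=pw_1^{1-p}(t)\int_{\Omega}(w_1v)^{p-1}\partial_t^{\theta_1}(w_1v)\,dx.
\]
Apply Proposition \ref{p5.1}(i) pointwise in $x$ to $W=w_1v$, which is admissible because $p$ is even (no sign restriction on $W$), to get
\[
pW^{p-1}\partial_t^{\theta_1}W\geq\partial_t^{\theta_1}W^{p}+(p-1)W^{p}\omega_{1-\theta_1}(t).
\]
Integrating in $x$ and using $w_1^{1-p}(t)(w_1v)^{p}=w_1(t)v^{p}$ yields the lower bound
\[
p\int_{\Omega}v^{p-1}\partial_t^{\theta_1}(w_1v)\,dx\geq w_1^{1-p}(t)\partial_t^{\theta_1}\Bigl(w_1^{p}\int_{\Omega}v^{p}\,dx\Bigr)+(p-1)w_1(t)\omega_{1-\theta_1}(t)\int_{\Omega}v^{p}\,dx.
\]
For part (ii) the parallel step uses Corollary \ref{c5.0} (valid on $[0,T^{*}]$ since $\mathcal{N}_{\theta}\ge 0$ and decreasing there), giving the analogous estimate with $\partial_t^{\theta_1}$ replaced by $\frac{d}{dt}(\mathcal{N}_{\theta}*\cdot)$ and $\omega_{1-\theta_1}$ by $\mathcal{N}_{\theta}$.

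\textbf{Step 3 (time integration).} To recover the stated right-hand sides, apply $I_t^{\theta}$ and handle the ``mixed'' term $I_t^{\theta}\bigl[w_1^{1-p}\partial_t^{\theta_1}F\bigr]$ with $F=w_1^{p}\int_{\Omega}v^{p}\,dx$ by invoking Proposition \ref{p5.1*}(ii) with the role reversal $(\tilde{w}_1,\tilde{w}_2,\tilde{\theta}_1,\tilde{\theta})=(w_1^{1-p},F,\theta,\theta_1)$; the required order constraint $\tilde{\theta}_1\geq\tilde{\theta}$ is $\theta\geq\theta_1$, which holds by hypothesis. The three resulting contributions are precisely: $I_t^{\theta-\theta_1}(w_1\int_{\Omega}v^{p}\,dx)$ (since $w_1^{1-p}F=w_1\int v^{p}\,dx$); the initial-time correction $-F(0)[I_t^{\theta-\theta_1}(w_1^{1-p})-I_t^{\theta}(w_1^{1-p}\omega_{1-\theta_1})]$; and the mean-value remainder $-\theta_1I_t^{1+\theta-\theta_1}(\mathcal{W}(w_1^{1-p})F)$ with $F=w_1^{p}\int_{\Omega}v^{p}\,dx$. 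Combined with $I_t^{\theta}$ of the $(p-1)w_1\omega_{1-\theta_1}\int v^{p}\,dx$ term, these match the four terms on the right-hand side of (i). For (ii) one writes $\frac{d}{dt}(\mathcal{N}_{\theta}*F)=\partial_t^{\theta_1}F-\partial_t^{\theta_2}F$, applies the same identity to each piece (with $\theta_1$ and $\theta_2$ respectively, which is legal since $\theta_2<\theta_1\leq\theta$), and collects; the eight resulting terms reproduce the stated bound after using the semigroup identity $I_t^{\theta-\theta_1}I_t^{\theta_1}(\cdot)=I_t^{\theta}(\cdot)$ in the initial-data correction.

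\textbf{Main obstacle.} The bulk of the work is bookkeeping: matching signs and orders in the many residual terms generated by Proposition \ref{p5.1*}(ii), particularly for part (ii) where both $\theta_1$ and $\theta_2$ expansions must be combined and the initial-trace correction rewritten via the semigroup property. One also has to justify interchanging $\partial_t^{\theta_1}$ (respectively $\mathcal{N}_{\theta}*$) with $\partial/\partial x$ and with $\int_{\Omega}\cdot\,dx$; the regularity $w_2\in\C^{2+\alpha,\frac{2+\alpha}{2}\theta}(\bar{\Omega}_T)$ and $w_1\in\C^{1}([0,T])$, via Fubini and dominated convergence, provides what is needed. Finally, the restriction $t\in[0,T^{*}]$ in part (ii) is exactly the domain on which Corollary \ref{c5.0} applies.
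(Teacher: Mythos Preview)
Your proposal follows essentially the same route as the paper: integrate by parts in $x$, apply the scalar chain-rule inequality (Proposition~\ref{p5.1}(i) for (i), Corollary~\ref{c5.0} for (ii)) pointwise to $w_1\partial_x w_2$, then post-process with $I_t^{\theta}$ via Proposition~\ref{p5.1*}(ii). The identification of terms and the handling of part (ii) by splitting $\frac{d}{dt}(\mathcal{N}_{\theta}*F)=\partial_t^{\theta_1}F-\partial_t^{\theta_2}F$ match the paper's computation exactly.

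The one substantive difference is that the paper does not rely on a bare appeal to ``Fubini and dominated convergence'' to justify commuting $\partial_x$ with $\partial_t^{\theta_1}$ and to make $\partial_t^{\theta_1}(w_1w_2)$ regular enough in $x$ for the integration by parts. Instead it first constructs an odd/even extension $W_{\mathfrak{d}}$ of $w_2$ across $\partial\Omega$ (preserving the relevant boundary condition), cuts off, and mollifies in $x$ to obtain $W_{\eta,\varepsilon}\in\C([0,T],\C^{\infty}(\R))$ with $\mathbf{D}_t^{\theta_i}W_{\eta,\varepsilon}\in\C([0,T],\C^{\infty}(\R))$ and $W_{\eta,\varepsilon}=0$ (respectively $\partial_x W_{\eta,\varepsilon}=0$) on $\partial\Omega_T$. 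All the manipulations in your Steps~1--3 are then performed on $W_{\eta,\varepsilon}$, where they are trivially legitimate, and one passes to the limit $\varepsilon\to 0$ using uniform convergence on $\bar{\Omega}_T$ of $\partial_x^i W_{\eta,\varepsilon}$, $\mathbf{D}_t^{\theta_i}W_{\eta,\varepsilon}$, $\partial_t^{\theta_i}(w_1W_{\eta,\varepsilon})$, and $\partial_x(\partial_x W_{\eta,\varepsilon})^{p-1}$. Your argument is correct once this approximation step is supplied; without it the sentence ``Fubini and dominated convergence provides what is needed'' is doing more work than it can honestly bear.
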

\begin{proof}
First we consider the case of homogeneous Dirichlet boundary condition.
We preliminarily observe that the estimate at the point (ii) is the same as the one for the bound at (i) (where instead of Proposition \ref{p5.1}, one should use Corollary \ref{c5.0})  of this claim. For these reason, we are left to tackle the inequality in (i).

To this end, similar to the proof of Lemma 4.2 \cite{KPV2}, we first construct the mollification of the function $w_2$. For any positive $\mathfrak{d}<|b-a|$, we introduce a cut function $\eta\in\C_{0}^{\infty}(\R)$ taking values in $[0,1]$:
\[
\begin{cases}
\eta=1,\quad\text{if}\quad x\in(a-\frac{\mathfrak{d}}{4},b+\frac{\mathfrak{d}}{4}),\\
\eta=0,\quad\text{if}\quad x\in\R\backslash(a-\frac{\mathfrak{d}}{2},b+\frac{\mathfrak{d}}{2}),
\end{cases}
\] 
and then define the even extension $W_{\mathfrak{d}}(x,t)$ of the function $w_2(x,t)$ in the segment $(a-\mathfrak{d},b+\mathfrak{d})$ as
\[
W_{\mathfrak{d}}(x,t)=
\begin{cases}
-w_2(2a-x,t),\quad\text{if}\quad x\in(a-\mathfrak{d},a),\\
w_2(x,t),\qquad\quad\quad\text{if}\quad x\in[a,b],\\
-w_2(2b-x,t),\quad\text{if}\quad x\in(b,b+\mathfrak{d}).
\end{cases}
\]
Then we build the zero extension of $W_{\mathfrak{d}}$
outside the segment $(a-\mathfrak{d},b+\mathfrak{d})$ as
\[
W_{\eta}=W_{\eta}(x,t)=\eta(x)W_{\mathfrak{d}}(x,t).
\]
It is easily verify that $W_{\eta}\in\C^{2+\alpha,\frac{2+\alpha}{2}\theta}(\bar{\R}_{T}),$ and
\[
\|W_{\eta}\|_{\C^{2+\alpha,\frac{2+\alpha}{2}\theta}(\bar{\R}_{T})}\leq C\|w_{2}\|_{\C^{2+\alpha,\frac{2+\alpha}{2}\theta}(\bar{\Omega}_{T})}.
\]
Setting the mollifier $J_{\varepsilon}(x)$ satisfying the properties:
\[
J_{\varepsilon}(x)\in\C_{0}^{\infty}(\R),\quad J_{\varepsilon}(x)=0\quad\text{if}\quad |x|\geq\varepsilon,
\quad
\int_{-\infty}^{+\infty}J_{\varepsilon}(x)dx=1,
\]
we define the molification of the function $W_{\eta}$ as
\[
W_{\eta,\varepsilon}=\int_{\R} J_{\varepsilon}(x-y)W_{\eta}(y,t)dy.
\]
Clearly,
\[
W_{\eta,\varepsilon}\in\C([0,T],\C^{\infty}(\R)),\quad \mathbf{D}_{t}^{\theta_i}W_{\eta,\varepsilon}\in\C([0,T],\C^{\infty}(\R)),\, 0<\theta_{i}\leq\theta,\, i=1,2,
\]
and
\[
W_{\eta,\varepsilon}=0\qquad\text{on}\quad\partial\Omega_{T}.
\]
The last equality tells us that
\[
\partial_{t}^{\theta_2}W_{\eta,\varepsilon}=\partial_{t}^{\theta_1}W_{\eta,\varepsilon}=\partial_{t}^{\theta}W_{\eta,\varepsilon}=0\qquad
\text{for}\quad (x,t)\in\partial\Omega_{T}.
\]
Following standard approximation arguments (see e.g. \cite[Chapter 1]{AF}), we have the uniform convergences on $\bar{\Omega}_{T}$ as $\varepsilon\to 0$
\begin{align*}
\frac{\partial^{i}W_{\eta,\varepsilon}}{\partial x^{i}}&\to \frac{\partial^{i}w_{2}}{\partial x^{i}},\quad i=0,1,2,\quad\text{and}\\
\mathbf{D}_{t}^{\theta_i}W_{\eta,\varepsilon}&\to\mathbf{D}_{t}^{\theta_i}w_2,\quad 0<\theta_2<\theta_1\leq\theta<1,\\
\frac{\partial}{\partial x}\bigg(\frac{\partial W_{\eta,\varepsilon}}{\partial x}\bigg)^{p-1}&\to 
\frac{\partial}{\partial x}\bigg(\frac{\partial w_{2}}{\partial x}\bigg)^{p-1},\quad p\geq 2.
\end{align*}
Moreover, exploiting \cite[Corollary 3.1]{KPSV4} and standard technical calculations, we arrive at
\begin{align*}
\partial_{t}^{\theta_i}(w_1W_{\eta,\varepsilon})&\to \partial_{t}^{\theta_i}(w_1w_{2}),\quad i=1,2,\\
\mathbf{D}_{t}^{\theta_i}(w_1W_{\eta,\varepsilon})&\to \mathbf{D}_{t}^{\theta_i}(w_1w_{2}).
\end{align*}

Now, we begin to prove the first inequality of this lemma for $W_{\eta,\varepsilon}$. Namely, integration by parts together with Propositions \ref{p5.1} and \ref{p5.1*} yield
\begin{align*}
&-pI_{t}^{\theta}\bigg(\int_{\Omega}\partial_{t}^{\theta_1}(w_1W_{\eta,\varepsilon})\frac{\partial}{\partial x}\bigg(\frac{\partial W_{\eta,\varepsilon}}{\partial x}\bigg)^{p-1}dx\bigg)(t)=
pI_{t}^{\theta}\bigg(w_{1}^{1-p}\int_{\Omega}\partial_{t}^{\theta_1}\frac{\partial}{\partial x}(w_1W_{\eta,\varepsilon})\bigg(\frac{\partial w_1 W_{\eta,\varepsilon}}{\partial x}\bigg)^{p-1}dx\bigg)(t)\\
&
\geq
I_{t}^{\theta}\bigg(w_{1}^{1-p}\int_{\Omega}\partial_{t}^{\theta_1}\bigg(\frac{\partial}{\partial x}(w_1W_{\eta,\varepsilon})\bigg)^{p}dx\bigg)(t)
+
(p-1)I_{t}^{\theta}\bigg(w_1\omega_{1-\theta_1}\int_{\Omega}\bigg(\frac{\partial W_{\eta,\varepsilon}}{\partial x}\bigg)^{p}dx\bigg)(t)\\
&
=
(p-1)I_{t}^{\theta}\bigg(w_1\omega_{1-\theta_1}\int_{\Omega}\bigg(\frac{\partial W_{\eta,\varepsilon}}{\partial x}\bigg)^{p}dx\bigg)(t)
+
I_{t}^{\theta-\theta_1}\bigg(w_1\int_{\Omega}\bigg(\frac{\partial W_{\eta,\varepsilon}}{\partial x}\bigg)^{p}dx\bigg)(t)\\
&
-
w_1^{p}(0)\int_{\Omega}\bigg(\frac{\partial W_{\eta,\varepsilon}}{\partial x}(x,0)\bigg)^{p}dxI_{t}^{\theta-\theta_1}(w_{1}^{1-p})(t)
-\theta_1I_{t}^{1-\theta_1+\theta}\bigg(\mathcal{W}(w_1^{1-p})w_1^{p}\int_{\Omega}\bigg(\frac{\partial W_{\eta,\varepsilon}}{\partial x}\bigg)^{p}dx\bigg)(t)\\
&
+w_1^{p}(0)\int_{\Omega}\bigg(\frac{\partial W_{\eta,\varepsilon}}{\partial x}(x,0)\bigg)^{p}dx I_{t}^{\theta}(w_1^{1-p}\omega_{1-\theta_1})(t)
\equiv
\mathcal{I}\bigg(t,w_1,\frac{\partial W_{\eta,\varepsilon}}{\partial x}\bigg).
\end{align*}
Finally, taking into account of the positivity of $w_1$ and even $p$ to control the last term in the right-hand side of the last inequality, we arrive at the desired bound to the function $W_{\eta,\varepsilon}$. The conclusion can be easily drawn from the uniform convergences
\[
I_{t}^{\theta}\bigg(\int_{\Omega}\partial_{t}^{\theta_1}(w_1W_{\eta,\varepsilon})\frac{\partial}{\partial x}\bigg(\frac{\partial W_{\eta,\varepsilon}}{\partial x}\bigg)^{p-1}dx\bigg)(t)
\to
I_{t}^{\theta}\bigg(\int_{\Omega}\partial_{t}^{\theta_1}(w_1w_{2})\frac{\partial}{\partial x}\bigg(\frac{\partial w_{2}}{\partial x}\bigg)^{p-1}dx\bigg)(t),
\]
and 
\[
\mathcal{I}\bigg(t,w_1,\frac{\partial W_{\eta,\varepsilon}}{\partial x}\bigg)\to 
\mathcal{I}\bigg(t,w_1,\frac{\partial w_{2}}{\partial x}\bigg).
\]

The case of $\frac{\partial u}{\partial x}=0$ on $\partial\Omega_{T}$ is similar and left to the reader.
This completes the proof of this lemma.
\end{proof}
For the reader's convenience, we now recall the global classical solvability of the linear version of problem \eqref{1.1}-\eqref{1.3}. This result, stated as a lemma, is proved in our previous work \cite[Theorem 4.1, Remark 4.4]{PSV6}, and will be a key point in our analysis in Sections \ref{s6}-\ref{s7}.
\begin{lemma}\label{l5.2}
Let $\partial\Omega\in\C^{2+\alpha}$, $f(u)\equiv 0$, and let $\nu\in(0,1],$  while $\nu_{i},\mu_{j}$ meet requirement \textbf{h1}. For any fixed $T>0$, under assumptions \textbf{h2}-\textbf{h4}, and  \textbf{h5(i)}, \textbf{h7},  then the conclusions of Theorem \ref{t4.1} hold.
Besides, this solution fulfills the estimate
\begin{align*}
&\|u\|_{\C^{2+\alpha,\frac{2+\alpha}{2}\nu}(\bar{\Omega}_{T})}+\sum_{i=1}^{M}\|\mathbf{D}_{t}^{\nu_i}u\|_{\C^{\alpha,\frac{\alpha}{2}\nu}(\bar{\Omega}_{T})}+\sum_{j=1}^{N}\|\mathbf{D}_{t}^{\mu_j}u\|_{\C^{\alpha,\frac{\alpha}{2}\nu}(\bar{\Omega}_{T})}\\
&
\leq C[\|u_0\|_{\C^{2+\alpha}(\bar{\Omega})}+\|g\|_{\C^{\alpha,\frac{\alpha}{2}\nu}(\bar{\Omega}_{T})}
+\|\psi\|_{\C^{2+\alpha,\frac{2+\alpha}{2}\nu}(\partial\Omega_{T})}]
\end{align*}
within the \textbf{DBC} \eqref{1.2}, or
\begin{align*}
&\|u\|_{\C^{2+\alpha,\frac{2+\alpha}{2}\nu}(\bar{\Omega}_{T})}+\sum_{i=1}^{M}\|\mathbf{D}_{t}^{\nu_i}u\|_{\C^{\alpha,\frac{\alpha}{2}\nu}(\bar{\Omega}_{T})}+\sum_{j=1}^{N}\|\mathbf{D}_{t}^{\mu_j}u\|_{\C^{\alpha,\frac{\alpha}{2}\nu}(\bar{\Omega}_{T})}\\
&
\leq C[\|u_0\|_{\C^{2+\alpha}(\bar{\Omega})}+\|g\|_{\C^{\alpha,\frac{\alpha}{2}\nu}(\bar{\Omega}_{T})}
+\|\psi_1\|_{\C^{1+\alpha,\frac{1+\alpha}{2}\nu}(\partial\Omega_{T})}]
\end{align*}
within \textbf{NBC} \eqref{1.2*}. 
The generic constant $C$ is independent of the right-hand sides of \eqref{1.1}-\eqref{1.3}.
\end{lemma}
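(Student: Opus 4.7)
The plan is to prove Lemma \ref{l5.2} by adapting a Schauder/continuation strategy to the multi-term fractional setting. First I would reduce to homogeneous data by constructing a lifting $V\in\C^{2+\alpha,\frac{2+\alpha}{2}\nu}(\bar{\Omega}_T)$ matching $\psi$ (respectively $\psi_1$) on $\partial\Omega_T$ and $u_0$ at $t=0$, and satisfying the corner compatibility required by \textbf{h7}. Setting $v=u-V$, the function $v$ solves a problem of the same form with zero initial and boundary conditions and with a new right-hand side $\tilde g$ whose norm in $\C^{\alpha,\frac{\alpha\nu}{2}}(\bar{\Omega}_T)$ is bounded by the data norms in the asserted estimate.

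Next I would apply the continuation method in a parameter $\lambda\in[0,1]$ interpolating between a single-term reference problem at $\lambda=0$, namely $\varrho_0\mathbf{D}_t^{\nu}v-\mathcal{L}_1 v=\tilde g$, and the full equation at $\lambda=1$. The reference problem is globally classically solvable in $\C^{2+\alpha,\frac{2+\alpha}{2}\nu}(\bar{\Omega}_T)$ by the standard single-term Schauder theory. To close the continuation, I need a $\lambda$-uniform a priori bound in that space. To derive it, I exploit the decomposition $\varrho_0=\varrho+\sum_{j=1}^{N}\gamma_j$ from \textbf{h3}, which lets me regroup the operator so that each pair $\mathbf{D}_t^{\nu}(\gamma_j v)-\mathbf{D}_t^{\mu_j}(\gamma_j v)$ becomes $\frac{d}{dt}(\mathcal{N}(\cdot;\nu,\mu_j)*\gamma_j v)$ with a kernel that is nonnegative on $[0,T^{*}]$ thanks to \textbf{h1}. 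Combined with Lemma \ref{l5.1} and Corollary \ref{c5.0}, this positivity furnishes $L_p$-energy estimates and control of the fractional time derivatives; invoking Proposition \ref{p5.1*} then reduces $\mathbf{D}_t^{\nu}(\varrho_0 v)$ to a principal part $\varrho_0\mathbf{D}_t^{\nu}v$ plus a Leibniz commutator $\mathfrak{I}_\nu$ bounded in the weaker norm $\C^{\alpha,\frac{\alpha\nu}{2}}(\bar{\Omega}_T)$.

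The H\"{o}lder a priori estimate itself would be obtained by a bootstrap: first, a pointwise maximum principle argument (again leveraging the nonnegativity of $\mathcal{N}(\cdot;\nu,\mu_j)$) controls $\|v\|_{\C(\bar{\Omega}_T)}$; next, intermediate H\"{o}lder control comes from the energy bounds and interpolation; finally, the full $\C^{2+\alpha,\frac{2+\alpha}{2}\nu}$ estimate follows by applying the single-term Schauder estimate to the principal equation $\varrho_0\mathbf{D}_t^{\nu}v-\mathcal{L}_1 v=\tilde g+R$, where $R$ collects the commutator, the subordinate terms $\mathbf{D}_t^{\nu_i}(\varrho_i v)$, $\mathbf{D}_t^{\mu_j}(\gamma_j v)$, and the memory term, then absorbing $R$ by smallness in time combined with iteration. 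Assumption \textbf{h1}, which forces $\nu_i,\mu_j<\nu(2-\alpha)/2$, ensures that the subordinate terms are of genuinely lower regularity on the H\"{o}lder scale and are controllable by interpolation. The memory term $\mathcal{K}*\mathcal{L}_2 v$ is handled by Young's convolution inequality together with the power-law bound $|\mathcal{K}(t)|\le Ct^{-\beta}$, $\beta\in(0,1-\nu]$, which makes it a lower-order perturbation. Once the uniform bound holds on $[0,T^{*}]$, a step-by-step iteration on subintervals of length $T^{*}$ extends the solution and the estimate to the arbitrary fixed $T$.

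The main obstacle is precisely this $\lambda$-uniform H\"{o}lder bound for $\mathbf{D}_t^{\nu}(\varrho_0 v)$: one must simultaneously master the Leibniz commutator from Proposition \ref{p5.1*}, the subordinate fractional-derivative terms, and the memory convolution, while relying essentially on the positivity of $\mathcal{N}(\cdot;\nu,\mu_j)$ on $[0,T^{*}]$ (assumption \textbf{h1}) to absorb the sign-indefinite contributions arising when one fractionally differentiates a product involving the unknown. Uniqueness of the classical solution then follows from the same energy estimate applied to the difference of two solutions with zero data, again via Lemma \ref{l5.1} and Corollary \ref{c5.0}.
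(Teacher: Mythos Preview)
The paper does not actually prove Lemma~\ref{l5.2}: immediately before the statement it says ``This result, stated as a lemma, is proved in our previous work \cite[Theorem 4.1, Remark 4.4]{PSV6}'', and no argument is given here. So there is nothing to compare your proposal against in this paper; the lemma is quoted as an external input used later in Sections~\ref{s6}--\ref{s7}.

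That said, your outline is a plausible reconstruction of the kind of argument carried out in \cite{PSV6}, and it mirrors closely what the present paper does for the \emph{nonlinear} problem (Theorem~\ref{t4.1}): lifting to homogeneous data, a continuation argument pivoting on a single-term reference problem, regrouping via $\varrho_0=\varrho+\sum_j\gamma_j$ so that the indefinite part becomes $\frac{d}{dt}(\mathcal{N}(\cdot;\nu,\mu_j)*\gamma_j v)$ with a nonnegative kernel on $[0,T^{*}]$, energy/$L_p$ bounds from Corollary~\ref{c5.0} and Lemma~\ref{l5.1}, the Leibniz commutator from Proposition~\ref{p5.1*}, and then a cut-off/translation iteration (as in Section~\ref{s6.5}) to reach an arbitrary $T$. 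Two places where your sketch is thinner than the actual work required: (i) the genuine $\C^{2+\alpha,\frac{2+\alpha}{2}\nu}$ Schauder bound for the single-term operator $\varrho_0\mathbf{D}_t^{\nu}-\mathcal{L}_1$ with variable $\varrho_0(t)$ is itself nontrivial and is one of the main contributions of \cite{PSV6}; you are using it as a black box; and (ii) your ``pointwise maximum principle'' step is not how the paper (or \cite{PSV6}) proceeds---the $\C(\bar\Omega_T)$ bound there comes from the $W^{1,2}$ energy estimate plus Sobolev embedding, not a comparison argument, precisely because a clean maximum principle is unavailable for this multi-term operator. If you replace that step by the energy route, your plan aligns with the strategy actually used.
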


We conclude this preliminary section with the inequalities for every $v\in W^{1,2}(\Omega)$ that will play a key role in the proof of Theorem \ref{t4.1} in Subsection \ref{s6.2}: 
\[
\|v\|_{L_{\infty}(\Omega)}\leq C\|v\|^{1/3}_{W^{1,2}(\Omega)}\|v\|^{2/3}_{L_{1}(\Omega)},
\]
and
\begin{equation}\label{i.1}
\|v\|_{L_{\infty}(\Omega)}\leq \varepsilon \|v\|_{W^{1,2}(\Omega)}+C\varepsilon^{-1/2}\|v\|_{L_{1}(\Omega)}.
\end{equation}
It is worth noting that the first inequality is the bound (2.19) in \cite{LU}, while estimate \eqref{i.1} is a simple consequence of the first and Young inequality.

%%%%%%%%%%%%%%%%%%%%%%%%%%%%%%%%%%%%%%%%%%%%%%%%%%%%%%%%%%%%%%%%%%%%%%
 
%%%%%%%%%%%%%%%%%%%%%%%%%%%%%%%%%%%%%%%%%%%%%%%%%%%%%%%%%%%%%%%%%%%%%%

\section{A Priori Estimates}
\label{s6}

\noindent In this section, we provide a priori estimates for the classical  solutions to the following family of problems for $\lambda\in[0,1],$
\begin{equation}\label{6.1}
\begin{cases}
\mathbf{D}_{t}u-\mathcal{L}_{1}u-\mathcal{K}*\mathcal{L}_2u+\lambda f(u)=g(x,t)\quad\text{in}\quad \Omega_{T},\\
u(x,0)=u_{0}(x)\qquad \text{in}\qquad \bar{\Omega},\\
u(x,t)=0,\qquad\text{on}\qquad \partial\Omega_{T}.
\end{cases}
\end{equation}
These estimates are the crucial point in the proof of Theorems \ref{t4.1} and \ref{t4.2}.
\begin{lemma}\label{l6.1}
Let the assumptions of Theorem \ref{t4.1} hold with $\psi=\psi_1\equiv 0$. 

\noindent If $u\in\C^{2+\alpha,\frac{2+\alpha}{2}\nu}(\bar{\Omega}_{T})$ is a classical solution to \eqref{6.1}. Then for any $\lambda\in[0,1],$ the following inequalities are fulfilled
\begin{equation}\label{6.2}
\|u\|_{\C([0,T],\C^{1}(\bar{\Omega}))}+\langle u\rangle_{t,\Omega_{T}}^{(\nu/2)}
\leq C(1+\|u_0\|_{W^{2,2}(\Omega)}+[\underset{t\in[0,T]}{\sup}I_{t}^{\nu}\|g\|^{2}_{L_{2}(\Omega)}]^{\frac{1}{2}}),
\end{equation}
\begin{equation}\label{6.3}
\|u\|_{\C^{2+\alpha,\frac{2+\alpha}{2}\nu}(\bar{\Omega}_{T})}+\sum_{i=1}^{M}\|\mathbf{D}_{t}^{\nu_i}u\|_{\C^{\alpha,\frac{\alpha}{2}\nu}(\bar{\Omega}_{T})}+\sum_{j=1}^{N}\|\mathbf{D}_{t}^{\mu_j}u\|_{\C^{\alpha,\frac{\alpha}{2}\nu}(\bar{\Omega}_{T})}
\leq C[1+\|u_0\|_{\C^{2+\alpha}(\bar{\Omega})}+\|g\|_{\C^{\alpha,\frac{\alpha}{2}\nu}(\bar{\Omega}_{T})}],
\end{equation}
\begin{align}\label{6.4}\notag
&\|u\|_{L_{2}((0,T),W^{2,2}(\Omega))}+\|\mathbf{D}_{t}^{\nu}u\|_{L_2(\Omega_{T})}
+\sum_{i=1}^{M}\|\mathbf{D}_{t}^{\nu_i}u\|_{L_2(\Omega_{T})}
+\sum_{j=1}^{N}\|\mathbf{D}_{t}^{\mu_j}u\|_{L_2(\Omega_{T})}\\&
\leq C[1+\|u_0\|_{W^{2,2}(\Omega)}+[\underset{t\in[0,T]}{\sup}I_{t}^{\nu}\|g\|^{2}_{L_{2}(\Omega)}]^{\frac{1}{2}}].
\end{align}
Here the positive constant $C$ is independent of $\lambda$ and depends only on $\nu,\nu_i,\mu_{j}$, $T,L,L_i$, $r$ and the corresponding norms of the coefficients $a_i,b_i,$ $\varrho_{0},$ $\varrho_{j}$, $\gamma_{j}$ and of the kernel $\mathcal{K}$.
\end{lemma}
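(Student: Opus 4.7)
\medskip
\noindent\textbf{Proof proposal.} The plan is to obtain the three inequalities in the natural order \eqref{6.2}, \eqref{6.4}, \eqref{6.3}, treating \eqref{6.3} at the end as a consequence of the linear theory in Lemma \ref{l5.2} applied with right-hand side $g-\lambda f(u)$. Throughout, all constants must be independent of $\lambda\in[0,1]$, so we never use the full nonlinear structure; we only rely on the bounds and sign conditions in assumption \textbf{h6} and the Lipschitz constant on bounded sets.

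First, I would derive a global $L_{2}$-type energy estimate. Multiply the equation in \eqref{6.1} by $u$ and integrate over $\Omega$; integration by parts uses the homogeneous boundary condition to handle $\mathcal{L}_{1}u$ and $\mathcal{K}\ast\mathcal{L}_{2}u$, producing the coercive term $\delta_{0}\|u_{x}\|_{L_{2}}^{2}$ plus lower-order terms controlled by Young's and Cauchy's inequalities together with the kernel bound in \textbf{h4}. The delicate part is the fractional time operator $\mathbf{D}_{t}u$: using the structural identity $\varrho_{0}=\varrho+\sum_{j}\gamma_{j}$ from \textbf{h3}, I rewrite
\[
\mathbf{D}_{t}^{\nu}(\varrho_{0}u)-\sum_{j=1}^{N}\mathbf{D}_{t}^{\mu_{j}}(\gamma_{j}u)
=\mathbf{D}_{t}^{\nu}(\varrho u)+\sum_{j=1}^{N}\frac{d}{dt}\bigl(\mathcal{N}(\cdot;\nu,\mu_{j})\ast(\gamma_{j}u-\gamma_{j}(0)u_{0})\bigr)+\text{lower order},
\]
so that on $[0,T^{*}]$ the kernels $\mathcal{N}(\cdot;\nu,\mu_{j})$ are nonnegative by \textbf{h1}, and Proposition \ref{p5.1}(i) together with Corollary \ref{c5.0} yield
\[
2u\bigl[\mathbf{D}_{t}^{\nu}(\varrho u)+\tfrac{d}{dt}(\mathcal{N}\ast\gamma_{j}u)\bigr]\;\geq\;\varrho\,\mathbf{D}_{t}^{\nu}u^{2}+\tfrac{d}{dt}(\mathcal{N}\ast\gamma_{j}u^{2})+\text{nonnegative remainder}.
\]
The $\mathbf{D}_{t}^{\nu_{i}}(\varrho_{i}u)$ terms are absorbed by interpolation because $\nu_{i}<\nu(2-\alpha)/2<\nu$ (cf.\ \textbf{h1}) combined with Proposition \ref{p5.1*}. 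For $f$ the case \textbf{h6(ii)} gives the coercive contribution $L_{3}|u|^{r+1}$ while \textbf{h6(i)} gives $|uf(u)|\leq L(1+u^{2})$, both absorbed by Gronwall. Taking $I_{t}^{\nu}$ of the resulting inequality and iterating on $[0,T^{*}]$, $[T^{*},2T^{*}],\ldots$ produces $\|u\|_{L_{\infty}([0,T],L_{2})}+\|u_{x}\|_{L_{2}(\Omega_{T})}\leq C(1+\|u_{0}\|_{L_{2}}+\sup_{t}I_{t}^{\nu}\|g\|_{L_{2}}^{2})^{1/2}$.

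Next, to upgrade to \eqref{6.2} and \eqref{6.4}, I apply Lemma \ref{l5.1} with $w_{2}=u$, $w_{1}=\varrho$ (resp.\ $\gamma_{j}$) and $p=2$, which amounts to testing the differentiated equation against $-u_{xx}$. This yields control of $\|u_{xx}\|_{L_{2}(\Omega_{T})}$ and of $\|\mathbf{D}_{t}^{\nu}u\|_{L_{2}(\Omega_{T})}$ after moving the spatial and memory terms to the right and using the equation itself. The one-dimensional embedding \eqref{i.1} then converts this into $\|u\|_{\C([0,T],\C^{1}(\bar{\Omega}))}$, proving the first part of \eqref{6.2}. The time Hölder seminorm $\langle u\rangle^{(\nu/2)}_{t,\Omega_{T}}$ follows from the norm equivalence \eqref{2.1} applied to $\mathbf{D}_{t}^{\nu}u\in L_{2}$, combined with the continuous embedding $\mathcal{H}^{\nu/2}((0,T),L_{\infty}(\Omega))\hookrightarrow \C^{\nu/2-\varepsilon}$ after using the $\C^{1}$-in-space bound to upgrade the target space. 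Summing the analogous bounds for $\mathbf{D}_{t}^{\nu_{i}}u$ and $\mathbf{D}_{t}^{\mu_{j}}u$ (each obtained by applying $I_{t}^{\nu-\nu_{i}}$ or $I_{t}^{\nu-\mu_{j}}$ to the previous estimates, using Proposition \ref{p5.1}(ii)) closes \eqref{6.4}.

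Finally, with \eqref{6.2} in hand, the nonlinear term $\lambda f(u)$ is $\C^{\alpha,\alpha\nu/2}$ in $\bar{\Omega}_{T}$: on the bounded set $\{|u|\leq\rho\}$ determined by \eqref{6.2}, $f$ is Lipschitz by \textbf{h6(i)}, hence $f(u)$ inherits the Hölder regularity of $u$, which follows from $u\in\C([0,T],\C^{1+\alpha'}(\bar{\Omega}))\cap\C^{\nu/2}([0,T],\C(\bar{\Omega}))$ via interpolation against the $\C^{2+\alpha}$-in-space control already implicit in the $W^{2,2}$ bound. I then view $u$ as the solution of the linear problem \eqref{6.1} with $\lambda=0$ and forcing $\tilde{g}=g-\lambda f(u)$ and apply Lemma \ref{l5.2}, which delivers exactly \eqref{6.3} with a constant depending only on the structural data. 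The main obstacle in the whole scheme is the energy estimate on the first window $[0,T^{*}]$: outside $[0,T^{*}]$ the kernels $\mathcal{N}(\cdot;\nu,\mu_{j})$ change sign, and one must propagate the estimate time-slab by time-slab using the cut-off technology of Lemma \ref{l5.0} so that the Gronwall argument restarts with controlled initial data at each $kT^{*}$.
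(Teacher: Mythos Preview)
Your overall architecture is right and matches the paper's: energy estimate on a short window $[0,T_0]$ where the kernels $\mathcal{N}(\cdot;\nu,\mu_j)$ are nonnegative, then $\C^1$-in-space, then time-H\"older, then $L_2$ of fractional derivatives, then Schauder via Lemma~\ref{l5.2}, then extension to $[0,T]$ by the cut-off of Lemma~\ref{l5.0}. However, two of your intermediate steps contain genuine gaps.

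First, the bound $\|u_x\|_{\C(\bar\Omega_{T_0})}$ does not follow from Lemma~\ref{l5.1} with $p=2$ together with the embedding~\eqref{i.1}. Testing against $-u_{xx}$ yields only $\sup_t\|u_x\|_{L_2(\Omega)}^2$ and $\sup_t I_t^{\nu}\|u_{xx}\|_{L_2(\Omega)}^2$, hence $\|u_{xx}\|_{L_2(\Omega_T)}$ but \emph{not} $\sup_t\|u_{xx}\|_{L_2(\Omega)}$; so you cannot apply~\eqref{i.1} to $v=u_x$ uniformly in~$t$. The paper instead runs a Moser-type iteration: test against $p\,\partial_x(u_x)^{p-1}$ with $p=2^m$, use Lemma~\ref{l5.1} for general even~$p$, invoke~\eqref{i.1} with $v=(u_x)^{p/2}$ to close a recursion of the form $\mathcal{A}_m\le C^{2^{-m}}[1+\|u_0\|_{W^{2,2}}+2^{3m2^{-m}}\mathcal{A}_{m-1}]$, and let $m\to\infty$. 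This is precisely why Lemma~\ref{l5.1} is stated for all even~$p$. Second, your route to $\langle u\rangle_{t,\Omega_T}^{(\nu/2)}$ via~\eqref{2.1} and a Sobolev embedding does not give the exponent $\nu/2$: from $\mathbf{D}_t^{\nu}u\in L_2(\Omega_T)$, equivalence~\eqref{2.1} places $u-u_0$ in $\mathcal{H}^{\nu}((0,T),L_2(\Omega))$, and $W^{\nu,2}(0,T)\hookrightarrow\C^{\nu-1/2}$ only when $\nu>1/2$ and with the wrong exponent. The paper proceeds pointwise: it writes $u(\cdot,t+h)-u(\cdot,t)=I_{t+h}^{\nu}\mathbf{D}_{t+h}^{\nu}u-I_t^{\nu}\mathbf{D}_t^{\nu}u$, averages over $[x,x+h^{\nu/2}]$, and estimates $\int_x^{x+h^{\nu/2}}\mathbf{D}_s^{\nu}u\,dz$ directly from the equation, using the already established $\C^1$-in-space bound, the product rule of Proposition~\ref{p5.1*}, and a Gronwall argument for the lower-order fractional derivatives. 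This delivers the sharp $\nu/2$ for all $\nu\in(0,1)$. A minor additional point: the $L_2$ bounds on $\mathbf{D}_t^{\nu_i}u$, $\mathbf{D}_t^{\mu_j}u$ are not obtained by applying $I_t^{\nu-\nu_i}$ to previous estimates, but via the decomposition~\eqref{6.20}, the norm equivalence~\eqref{2.1}, and interpolation in $W^{s,2}((0,T),L_2)$ to absorb the lower-order derivatives into the $\nu$-order one.
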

\begin{remark}\label{r6.0}
Actually,  our arguments in Section \ref{s6.1} tells us that the term $\|u\|_{\C(\bar{\Omega}_{T})}$ is evaluated via minor norms
\[
\|u\|_{\C(\bar{\Omega}_{T})}\leq C[1+\|u_0\|_{W^{1,2}(\Omega)}+[\underset{t\in[0,T]}{\sup}I_{t}^{\nu}\|g\|^{2}_{L_{2}(\Omega)}]^{\frac{1}{2}}].
\]
\end{remark}

First, we notice that estimate \eqref{6.3} is verified with the standard Schauder approach and by means of Lemma \ref{l5.2} and bound \eqref{6.2}. Thus, to prove Lemma \ref{l6.1}, we are left to produce inequalities \eqref{6.2} and \eqref{6.4}. We preliminarily observe that verification of these estimates in the case of absence of $\mathbf{D}_{t}^{\mu_i}(\gamma_i u),$ $i=1,2,...,N,$ (i.e. $N=0$) is the simpler and repeat the main steps (with minor changes) in arguments related with the analysis of the case  $N\geq 1$. Hence, here we focus on the  case of the presence of at least one fractional derivative $\mathbf{D}_{t}^{\mu_i}(\gamma_i u)$ in the operator $\mathbf{D}_t u$. To this end, appealing to assumption \textbf{h3}, we rewrite  $\mathbf{D}_t u$ in the more suitable form to our analysis
\begin{align*}\label{6.5}\notag
\mathbf{D}_{t}u&=\,_{1}\mathbf{D}_{t}u+\,_{2}\mathbf{D}_{t}u,\quad
\,_{1}\mathbf{D}_{t}u=\mathbf{D}_{t}^{\nu}(\varrho u)+\sum_{i=1}^{M}\mathbf{D}_{t}^{\nu_{i}}(\varrho_{i}u),\\
\,_{2}\mathbf{D}_{t}u&=\sum_{j=1}^{N}[\mathbf{D}_{t}^{\nu}(\gamma_{j} u)-\mathbf{D}_{t}^{\mu_{j}}(\gamma_{j}u)]=
\sum_{j=1}^{N}\frac{\partial}{\partial t}\bigg(\mathcal{N}(t;\nu,\mu_j)*(\gamma_{j}u-\gamma_{j}(0)u_{0})\bigg)
\end{align*}
(see \eqref{5.3} for the definition of the kernel $\mathcal{N}(t;\nu,\mu_j)$).

Thanks to the positivity of the kernel  $\mathcal{N}(t;\nu,\mu_j)$ for $t\in[0,T^{*}]$ (see \textbf{h1}), we first prove estimates \eqref{6.2} and \eqref{6.4} for $t\in[0,T_0]$ where 
\begin{equation}\label{6.0}
T_{0}<\min\bigg\{T^{*},\underset{j\in\{1,2,...,N\}}{\min} \bigg(\frac{\nu\Gamma(1+\nu-\mu_j)}{\mu_{j}}\bigg)^{\frac{1}{\nu-\mu_{j}}}\bigg\}.
\end{equation}
After that, if $T\geq T_{0}$, we discuss the extension of these bounds to the interval $(T_{0},T]$. It is worth noting that this step is absent in the case of $N=0$, due to the proof of estimates \eqref{6.2} and \eqref{6.4} is carried out immediately on entire  time interval.

To verify \eqref{6.2} and \eqref{6.4} for $t\in[0,T_0]$, we will follow the strategy containing fourth main steps. In the first, we evaluate the function $u(x,t)$ in the class $\C([0,T_{0}], W^{1,2}(\Omega))\cap L_{2}((0,T_{0}),W^{2,2}(\Omega))$. Then, Sobolev embedding theorem (see, e.g., \cite[Subsection 5.4]{AF}) allows us to readily obtain the bound of $\|u\|_{\C(\bar{\Omega}_{T_0})}$ exploiting only the estimate on $\|u\|_{\C([0,T_0],W^{1,2}(\Omega))}$. On the second stage, we evaluate the term $\|\frac{\partial u}{\partial x}\|_{\C(\bar{\Omega}_{T_0})}$ via integral iteration technique adapted to the case of multi-term fractional derivatives. After that, to complete the proof of \eqref{6.2}, appealing to estimate of $\|u\|_{\C([0,T_0],\C^{1}(\bar{\Omega}))}$, we arrive at the bound of the H\"{o}lder seminorm of $u$ with respect to time. Finally, taking into account \eqref{6.2}, we achieve estimate \eqref{6.4} via evaluation of $\|\mathbf{D}_{t}^{\nu}u\|_{L_{2}(\Omega_{T_0})}$, $\|\mathbf{D}_{t}^{\nu_{i}}u\|_{L_{2}(\Omega_{T_0})}$, $\|\mathbf{D}_{t}^{\mu_{j}}u\|_{L_{2}(\Omega_{T_0})}$.

Note that assumption \textbf{h3} provides the existence of a constant $C_0$ such that 
\begin{equation}\label{6.0*}
\sum_{i=0}^{2}[\underset{\bar{\Omega}_{T}}{\sup}|a_i(x,t)|+\underset{\bar{\Omega}_{T}}{\sup}|b_i(x,t)|]+\underset{\bar{\Omega}_{T}}{\sup}\bigg|\frac{\partial a_2}{\partial x}(x,t)\bigg|+\underset{\bar{\Omega}_{T}}{\sup}\bigg|\frac{\partial b_2}{\partial x}(x,t)\bigg|
\leq C_{0}.
\end{equation}
%%%%%%%%%%%%%%%%%%%%%%%%%%%%%%%%%%%%%%%%%%%%%%%%%%%%%%%%%%%%%%%%%%%%%%
 
%%%%%%%%%%%%%%%%%%%%%%%%%%%%%%%%%%%%%%%%%%%%%%%%%%%%%%%%%%%%%%%%%%%%%%

\subsection{Estimate of $\|u\|_{\C(\bar{\Omega}_{T_0})}$}
\label{s6.1}

We begin to evaluate the norm of $u$ in the space $\C([0,T_0],L_2(\Omega))$. To this end, we multiply the equation in \eqref{6.1} by $u(x,\tau)$, and then we integrate over $\Omega$ and compute the fractional integral $I_{t}^{\nu}$. Thus, we have
\begin{equation}\label{6.6}
\sum_{j=1}^{5}\mathcal{R}_{j}(t)=0,
\end{equation}
where we put
\begin{align*}
\mathcal{R}_{1}(t)&=I_{t}^{\nu}\bigg(\int_{\Omega}\,_{1}\mathbf{D}_{\tau}u udx\bigg)(t),\quad
\mathcal{R}_{2}(t)=I_{t}^{\nu}\bigg(\int_{\Omega}\,_{2}\mathbf{D}_{\tau}u udx\bigg)(t),\quad
\mathcal{R}_{3}(t)=-I_{t}^{\nu}\bigg(\int_{\Omega}\mathcal{L}_{1}u udx\bigg)(t),\\
\mathcal{R}_{4}(t)&=-I_{t}^{\nu}\bigg(\int_{\Omega}(\mathcal{K}*\mathcal{L}_{2}u) udx\bigg)(t),\quad
\mathcal{R}_{5}(t)=I_{t}^{\nu}\bigg(\int_{\Omega}[\lambda f(u)-g] udx\bigg)(t).
\end{align*}
At this point, we estimate each term $\mathcal{R}_{j}$, separately.

\noindent $\bullet$ By Propositions \ref{p5.1} and \ref{p5.1*} and assumptions \textbf{h2} and \textbf{h3},
\begin{align*}
\mathcal{R}_{1}(t)&\geq \frac{1}{2}I_{t}^{\nu}\Big(\varrho^{-1}\int_{\Omega}\partial_{t}^{\nu}(\varrho u)^{2}dx\Big)(t)
+
\frac{1}{2}\sum_{i=1}^{M}I_{t}^{\nu}\Big(\varrho_{i}^{-1}\int_{\Omega}\partial_{t}^{\nu_{i}}(\varrho_{i} u)^{2}dx\Big)(t)\\
&
-\frac{1}{2}\int_{\Omega}u_0^{2}dx\Big[\varrho^{2}(0)I_{t}^{\nu}(\varrho^{-1}\omega_{1-\nu})+\sum_{i=1}^{M}\varrho_{i}^{2}(0)
I_{t}^{\nu}(\varrho_{i}^{-1}\omega_{1-\nu_{i}})
\Big]\\
&
\geq \frac{\delta_{1}}{2}\int_{\Omega}u^{2}(x,t)dx
-\int_{\Omega}u_0^{2}dx\Big[2\varrho(0)+\sum_{i=1}^{M}\varrho_{i}(0)
\omega_{1+\nu-\nu_{i}}(t)\Big].
\end{align*}
% Finally, according to definition \eqref{5.1} and conditions \textbf{h1}, \textbf{h3},  we end up with the bound
%\begin{align*}
%\mathcal{R}_{1}(t)&\geq \frac{1}{2}[\delta_1+C_0\underset{i\in[1,2,...,M]}{\min}\rho_i(0)]\int_{\Omega}u^{2}(x,t)dx\\
%&
%-
%\|u_0\|^{2}_{L_2(\Omega)}\Big[\rho(T)+(\max\{1,T\})^{\nu-\nu_1}\sum_{i=1}^{M}\rho_{i}(T)\Big].
%\end{align*}

\noindent $\bullet$ As for the term $\mathcal{R}_{2}(t)$, we recast the arguments leading to the bound of $\mathcal{R}_{1}(t)$. Thus, exploiting  Corollary \ref{c5.0}, Proposition \ref{p5.1*}, and conditions \textbf{h2}, \textbf{h3}, we immediately arrive at
\begin{align*}
\mathcal{R}_2(t)&\geq\frac{1}{2}\sum_{j=1}^{N}I_{t}^{\nu}\bigg(\gamma^{-1}_{j}\int_{\Omega}\frac{\partial}{\partial \tau}(\mathcal{N}(\tau;\nu,\mu_j)*(\gamma_{j}u)^{2})dx\bigg)(t)
-\frac{1}{2}\int_{\Omega}u_0^{2}dx\sum_{j=1}^{N}I_{t}^{\nu}\bigg(\mathcal{N}(\tau;\nu,\mu_{j})\frac{\gamma^{2}_{j}(0)}{\gamma_{j}(\tau)}\bigg)(t)\\
&
\geq
\frac{1}{2}\sum_{j=1}^{N}\gamma_{j}(t)\int_{\Omega}u^{2}(x,t)dx-\frac{1}{2}\sum_{j=1}^{N}I_{t}^{\nu-\mu_{j}}\bigg(\gamma_{j}\int_{\Omega}u^{2}dx\bigg)(t)
-
\frac{1}{2}\sum_{j=1}^{N}\gamma_{j}^{2}(0)\gamma_{j}^{-1}(t)\int_{\Omega}u_{0}^{2}dx\\
&
+\frac{1}{2}\sum_{j=1}^{N}
\bigg[
I_{t}^{1}\bigg(-\nu\mathcal{W}(\gamma_{j}^{-1})\int_{\Omega}(\gamma_{j}u)^{2}dx\bigg)(t)-
I_{t}^{1+\nu-\mu_{j}}\bigg(-\mu_{j}\mathcal{W}(\gamma_{j}^{-1})\int_{\Omega}(\gamma_{j}u)^{2}dx\bigg)(t)
\bigg].
\end{align*}
After that, Corollary \ref{c5.1} with $T_1=T_0$ (see restriction \eqref{6.0}) and assumption \textbf{h3} tell us about the positivity of the last sum in the right-hand side of this inequality. Thus, we have
\[
\mathcal{R}_2(t)\geq \frac{N}{2}\delta_{3}\int_{\Omega}u^{2}(x,t)dx
-\frac{1}{2}\sum_{j=1}^{N}\gamma_{j}(T)I_{t}^{\nu-\mu_j}\bigg(\int_{\Omega}u^{2}dx\bigg)(t)
-\frac{1}{2}\sum_{j=1}^{N}\gamma_{j}(0)\|u_0\|^{2}_{L_{2}(\Omega)}.
\]

\noindent $\bullet$ Integrating by parts and keeping in mind the homogeneous Dirichlet boundary condition and according to assumptions \textbf{h2-h3} and \eqref{6.0*}, we deduce
\[
\mathcal{R}_{3}(t)\geq\frac{3\delta_0}{4} I_{t}^{\nu}\bigg(\int_{\Omega}\bigg(\frac{\partial u}{\partial x}\bigg)^{2}dx\bigg)(t)
-C_1I_{t}^{\nu}\bigg(\int_{\Omega}u^{2}dx\bigg),
\]
where
\[
C_1=C_0(1+4C_{0}/\delta_0).
\]
It is worth noting that, we used the Cauchy inequality to evaluate the term $u\frac{\partial u}{\partial x}$.

\noindent $\bullet$  Coming to the term $\mathcal{R}_{4}(t)$, we integrate by parts and take advantage of the Cauchy and the Poincare inequalities and requirements \textbf{h2-h3}, \eqref{6.0*}. In summary, we obtain
\[
\mathcal{R}_{4}(t)\geq -\frac{\delta_0}{4}I_{t}^{\nu}\bigg(\int_{\Omega}\bigg(\frac{\partial u}{\partial x}\bigg)^{2}dx\bigg)(t)
-CI_{t}^{\nu}\bigg(\mathcal{K}*\int_{\Omega}\bigg(\frac{\partial u}{\partial x}\bigg)^{2}dx\bigg)(t)
-CI_{t}^{\nu}\bigg(\int_{\Omega}u^{2}dx\bigg)(t),
\]
where the positive constant $C$ is independent of $\lambda$ and $T_0$.

\noindent $\bullet$ Exploiting assumption \textbf{h6 (i)} with the Cauchy inequality and point (ii) of Proposition \ref{p5.1}, we have
\[
\mathcal{R}_{5}(t)\geq -L|\Omega|\omega_{1+\nu}(t)-(L+2)I_{t}^{\nu}\bigg(\int_{\Omega}u^{2}dx\bigg)(t)-I_{t}^{\nu}\bigg(\int_{\Omega}g^{2}dx\bigg)(t).
\]
Finally, collecting all estimates of $\mathcal{R}_{j}(t)$, we end up with
\begin{align*}
\int_{\Omega}u^{2}(x,t)dx+I_{t}^{\nu}\bigg(\int_{\Omega}\bigg(\frac{\partial u}{\partial x}\bigg)^{2}dx\bigg)(t)
&\leq C\Big\{\big(I_{t}^{\nu}+\sum_{i=1}^{N}I_{t}^{\nu-\mu_i}\big)\Big(\int_{\Omega}u^{2}dx\Big)(t)
+
I_{t}^{\nu}\Big(\mathcal{K}*\int_{\Omega}\bigg(\frac{\partial u}{\partial x}\bigg)^{2}dx\Big)(t)\\
&
+
1+\|u_0\|_{L_2(\Omega)}+I_{t}^{\nu}\bigg(\int_{\Omega}g^{2}dx\bigg)(t)
\Big\}.
\end{align*}
Appealing to associative properties of a convolution to handle the second term in the right-hand side of this estimate, we use the Gronwall-type inequality (4.3) in \cite{KPV3} and arrive at the desired bound
\begin{equation}\label{6.7}
\|u\|^{2}_{\C([0,T_0],L_2(\Omega))}+\underset{t\in[0,T_0]}{\sup}I_{t}^{\nu}\bigg(\int_{\Omega}\bigg(\frac{\partial u}{\partial x}\bigg)^{2}dx\bigg)(t)
\leq C[1+\|u_0\|_{L_2(\Omega)}+\underset{t\in[0,T]}{\sup}I_{t}^{\nu}\|g\|^{2}_{L_2(\Omega)}]
\end{equation}
with the constant $C$ being independent of $\lambda$ and $T_0$.
Moreover, inequality \eqref{6.7} provides the estimate
\begin{equation}\label{6.8}
\bigg\|\frac{\partial u}{\partial x}\bigg\|_{L_2(\Omega_{T_0})}\leq C[1+\|u_0\|^{2}_{L_{2}(\Omega)}+\underset{t\in[0,T]}{\sup}I_{t}^{\nu}\|g\|^{2}_{L_2(\Omega)}].
\end{equation}

In order to complete the estimate of $\|u\|_{\C(\bar{\Omega}_{T_0})}$, we need a similar bound for the derivative $\frac{\partial u}{\partial x}$. To this end, we multiply the equation in \eqref{6.1} by $\frac{\partial^{2} u}{\partial x^{2}}(x,\tau)$ and then we integrate over $\Omega$ and compute the fractional integral $I_{t}^{\nu}$. Taking into account \textbf{h2-h4} and \textbf{h6 (i)} and applying Lemma \ref{l5.1} with $p=2$ and \eqref{6.8}, we obtain
\begin{align}\label{6.10*}\notag
&\frac{1}{2}[\delta_1+N\delta_3]\int_{\Omega}\bigg(\frac{\partial u}{\partial x}\bigg)^{2}dx+
\sum_{j=1}^{N}(\nu I_{t}^{1}-\mu_jI_{t}^{1+\nu-\mu_j})\bigg(\gamma_{j}^{2}(-\mathcal{W}(\gamma_{j}^{-1}))\int_{\Omega}\bigg(\frac{\partial u}{\partial x}\bigg)^{2}dx\bigg)(t)\\\notag &+
\frac{\delta_0}{2}I_{t}^{\nu}\bigg(\int_{\Omega}\bigg(\frac{\partial^{2} u}{\partial x^{2}}\bigg)^{2}dx\bigg)(t)
\leq
\frac{1}{2}\sum_{j=1}^{N}\gamma_{j}(T)I_{t}^{\nu-\mu_j}\bigg(\int_{\Omega}\bigg(\frac{\partial u}{\partial x}\bigg)^{2}dx\bigg)(t)
\\
&+\frac{3\|\mathcal{K}\|_{L_1(0,T)}C_{0}^{2}}{\delta_{0}}|\mathcal{K}|*I_{t}^{\nu}\bigg(\int_{\Omega}\bigg(\frac{\partial^{2} u}{\partial x^{2}}\bigg)^{2}dx\bigg)
+
C[1+\|u_0\|^{2}_{W^{1,2}(\Omega)}+\underset{t\in[0,T]}{\sup}I_{t}^{\nu}\|g\|^{2}_{L_2(\Omega)}].
\end{align}
At last, using Corollary \ref{c5.1} with $T_1=T_0$ (see \eqref{6.0*}) to handle the second term in the left-hand side of this inequality, and then applying Gronwall-type inequality (4.3) in \cite{KPV3}, we conclude that
\begin{equation}\label{6.9*}
\underset{t\in[0,T_0]}{\sup}\int_{\Omega}\bigg(\frac{\partial u}{\partial x}\bigg)^{2}dx+
\underset{t\in[0,T_0]}{\sup} I_{t}^{\nu}\bigg(\int_{\Omega}\bigg(\frac{\partial^{2} u}{\partial x^{2}}\bigg)^{2}dx\bigg)
\leq C[1+\|u_0\|^{2}_{W^{1,2}(\Omega)}+\underset{t\in[0,T]}{\sup}I_{t}^{\nu}\|g\|^{2}_{L_2(\Omega)}]
\end{equation}
where the positive constant $C$ is independent of $\lambda$ and $T_0$.

Collecting this estimate with \eqref{6.7} and \eqref{6.8} and applying  Sobolev embedding theorem (as we wrote before) arrive at the desired estimate
\begin{align}\label{6.9}\notag
&\|u\|_{\C(\bar{\Omega}_{T_0})}+\|u\|_{L_2((0,T_0),W^{2,2}(\Omega))}+
\|u\|_{\C([0,T_0],W^{1,2}(\Omega))}\\
&
\leq
C[1+\|u_0\|_{W^{1,2}(\Omega)}+\sqrt{\underset{t\in[0,T]}{\sup}I_{t}^{\nu}\|g\|^{2}_{L_2(\Omega)}}]\equiv C\mathcal{F}(u_0,g)
\end{align}
with the positive constant $C$ being independent of $\lambda, T_0$.

Obviously, the last estimate allows us to evaluate the term $\|\mathcal{K}*\mathcal{L}_{2}u\|_{L_2(\Omega_{T_0})}$. Indeed,
the Young inequality of a convolution (see, e.g. \cite{Fo}) provides
\[
\|\mathcal{K}*\mathcal{L}_{2}u\|_{L_2(\Omega_{T_0})}\leq C\|\mathcal{K}\|_{L_{1}(0,T)}\|\mathcal{L}_{2}u\|_{L_2(\Omega_{T_0})}.
\]
To manage the term $\|\mathcal{L}_{2}u\|_{L_2(\Omega_{T_0})}$, we apply \eqref{6.9} and exploit the smoothness of the coefficients $b_j,$ $j=0,1,2.$ Hence, we have
\begin{equation}\label{6.10}
\|\mathcal{K}*\mathcal{L}_{2}u\|^{2}_{L_2(\Omega_{T_0})}\leq C\|u\|^{2}_{L_2((0,T_0),W^{2,2}(\Omega))}
\leq 
C[1+\|u_0\|^{2}_{W^{1,2}(\Omega)}+\underset{t\in[0,T]}{\sup}I_{t}^{\nu}\|g\|^{2}_{L_2(\Omega)}],
\end{equation} 
where the positive constant $C$ is independent of $\lambda$ and $T_0$.
\qed
\begin{remark}\label{r6.1}
The treatment of the term $I_{t}^{\nu}(\int_{\Omega}f(u)u_{xx}dx)(t)$ in \eqref{6.10*} in the case of \eqref{4.1} differs from the case of $f(u)$ satisfying \eqref{4.2}. Indeed, if  \eqref{4.2} holds, we first rewrite the term  $I_{t}^{\nu}(\int_{\Omega}f(u)u_{xx}dx)(t)$ in the form
\[
 I_{t}^{\nu}\bigg(\int_{\Omega}f(u)u_{xx}dx\bigg)(t)= I_{t}^{\nu}\bigg(\int_{\Omega}\bar{f}(u)u_{xx}dx\bigg)(t)+ I_{t}^{\nu}\bigg(\int_{\Omega}[f(0)-L_4u]u_{xx}dx\bigg)(t)
\]
with
\[
\bar{f}(u)=f(u)-f(0)+L_4u.
\]
It is apparent that the second term in this representation is controlled with the arguments leading to the estimate of $\mathcal{R}_5$. Coming the first term, we note that  the function $\bar{f}(u)$ meets the first three requirements in \eqref{4.2} and, besides,
\[
\bar{f}(u)=0\quad\text{on}\quad \partial\Omega_T,\quad \bar{f}'(u)\geq 0.
\] 
Thus, taking into account these relations and integrating by parts arrive at
\[
I_{t}^{\nu}\bigg(\int_{\Omega}\bar{f}(u)u_{xx}dx\bigg)(t)=I_{t}^{\nu}\bigg(\int_{\Omega}\bar{f}'(u)(u_{x})^{2}dx\bigg)(t)\geq 0,
\] 
which in turn provides \eqref{6.10*}.
\end{remark}
%%%%%%%%%%%%%%%%%%%%%%%%%%%%%%%%%%%%%%%%%%%%%%%%%%%%%%%%%%%%%%%%%%%%%%
 
%%%%%%%%%%%%%%%%%%%%%%%%%%%%%%%%%%%%%%%%%%%%%%%%%%%%%%%%%%%%%%%%%%%%%%

\subsection{The bound of $\|\frac{\partial u}{\partial x}\|_{\C(\bar{\Omega}_{T_0})}$}
\label{s6.2}
In this subsection  we aim to prove the bound
\begin{equation}\label{6.11}
\bigg\|\frac{\partial u}{\partial x}\bigg\|_{\C(\bar{\Omega}_{T_0})}\leq \mathfrak{C}[1+\|u_0\|_{W^{2,2}(\Omega)}+(\underset{t\in[0,T]}{\sup}I_{t}^{\nu}\|g\|^{2}_{L_2(\Omega)})
^{1/2}]
\end{equation}
with the positive constant $\mathfrak{C}$ is independent of $\lambda$ and $T_0$ and depends only on $T,\nu,\nu_i,\mu_j,L,$ $\|\mathcal{K}\|_{L_1(0,T)},$ and the corresponding norms of the coefficients.

For simplicity of consideration, we assume  that
\begin{equation}\label{6.12}
\bigg\|\frac{\partial u}{\partial x}\bigg\|_{\C(\bar{\Omega}_{T_0})}\geq \frac{\mathfrak{C}}{2}\mathcal{F}(u_0,g),
\end{equation}
where $\mathcal{F}(u_0,g)$ is defined in \eqref{6.9}, otherwise we immediately arrive at \eqref{6.11}.

To verify \eqref{6.11} in the case of \eqref{6.12}, we reason similarly to the case analyzed in Subsection \ref{s6.1}. Thus, we first  multiply the equation in \eqref{6.1} by $p\frac{\partial}{\partial x}\bigg(\frac{\partial u}{\partial x}\bigg)^{p-1}$ and then integrate over $\Omega$ and compute the fractional integral $I_{t}^{\nu}$
\begin{align*}
&\varrho(0)\int_{\Omega}\bigg(\frac{\partial u}{\partial x}\bigg)^{p}dx+\frac{3\delta_0}{4}I_{t}^{\nu}\bigg(\int_{\Omega}\bigg(\frac{\partial u}{\partial x}\bigg)^{p-2}\bigg(\frac{\partial^{2} u}{\partial x^{2}}\bigg)^{2}dx\bigg)(t)
\\&\leq C\bigg\{p(p-1)[1+\|u\|_{\C(\bar{\Omega}_{T_0})}]^{2}I_{t}^{\nu}\bigg(\int_{\Omega}\bigg(\frac{\partial u}{\partial x}\bigg)^{p-2}dx\bigg)(t)\\
&
+
p(p-1)I_{t}^{\nu}\bigg(\int_{\Omega}\bigg(\frac{\partial u}{\partial x}\bigg)^{p}dx\bigg)(t)
+
\int_{\Omega}\bigg(\frac{\partial u_{0}}{\partial x}\bigg)^{p}dx\\&
+
\bigg(\sum_{j=1}^{N}\gamma_{j}(T)T^{\mu_N-\mu_{j}}\bigg)I_{t}^{\nu-\mu_{N}}\bigg(\int_{\Omega}\bigg(\frac{\partial u}{\partial x}\bigg)^{p}dx\bigg)(t)\\&+
(p-1)N\Gamma(1+\nu)\underset{j\in(1,2,...,N)}{\sup}\|\gamma_{j}\|_{\C^{1}([0,T])}T^{1-\nu}
I_{t}^{\nu}\bigg(\int_{\Omega}\bigg(\frac{\partial u}{\partial x}\bigg)^{p}dx\bigg)(t)\bigg\}\\&
+
I_{t}^{\nu}\bigg(\int_{\Omega}(\mathcal{K}*\mathcal{L}_{2}u)p\frac{\partial }{\partial x}\bigg(\frac{\partial u}{\partial x}\bigg)^{p-1}dx\bigg)(t)
\end{align*}
with $C$ being independent of $\lambda$ and $T_0$.
Here we used Corollary \ref{c5.1}, Lemma \ref{l5.1} and assumptions \textbf{h1-h3} to manage the term $I_{t}^{\nu}\bigg(\int_{\Omega}\mathbf{D}_{t}up\frac{\partial }{\partial x}\bigg(\frac{\partial u}{\partial x}\bigg)^{p-1}dx\bigg)$, while to handle the term $I_{t}^{\nu}\bigg(\int_{\Omega}\mathcal{L}_{1}up\frac{\partial }{\partial x}\bigg(\frac{\partial u}{\partial x}\bigg)^{p-1}dx\bigg)$, we appealed to \textbf{h2-h3} and the Young inequality. Finally, we take advantage of \eqref{4.1} and again  the Young inequality to treat 
$I_{t}^{\nu}\bigg(\int_{\Omega}[\lambda f(u)-g]p\frac{\partial }{\partial x}\bigg(\frac{\partial u}{\partial x}\bigg)^{p-1}dx\bigg)$. After that, keeping in mind restriction \eqref{6.12} and inequality \eqref{6.9}, we conclude that
\begin{align}\label{6.13}\notag
&\varrho(0)\int_{\Omega}\bigg(\frac{\partial u}{\partial x}\bigg)^{p}dx+\frac{3\delta_0}{4}I_{t}^{\nu}\bigg(\int_{\Omega}\bigg(\frac{\partial u}{\partial x}\bigg)^{p-2}\bigg(\frac{\partial^{2} u}{\partial x^{2}}\bigg)^{2}dx\bigg)(t)\\\notag
&
\leq C\bigg\{p(p-1)I_{t}^{\nu}\bigg(\underset{\bar{\Omega}}{\sup}\bigg|\frac{\partial u}{\partial x}\bigg|^{p}\bigg)(t)
+
I_{t}^{\nu-\mu_{N}}\bigg(\int_{\Omega}\bigg(\frac{\partial u}{\partial x}\bigg)^{p}dx\bigg)(t)\\
&
+
\int_{\Omega}\bigg(\frac{\partial u_{0}}{\partial x}\bigg)^{p}dx\bigg\}
+
I_{t}^{\nu}\bigg(\int_{\Omega}(\mathcal{K}*\mathcal{L}_{2}u)p\frac{\partial }{\partial x}\bigg(\frac{\partial u}{\partial x}\bigg)^{p-1}dx\bigg)(t),
\end{align}
where the positive constant $C$ is independent of $T_0,\lambda,p$ and the corresponding norms of $u_0,g$.

Now we are left to evaluate the last term in the right-hand side of \eqref{6.13}. To this end, applying the Young inequality and restriction \eqref{6.0*} provides the estimate
\begin{align*}
\int_{\Omega}(\mathcal{K}*\mathcal{L}_{2}u)p\frac{\partial}{\partial x}\bigg(\frac{\partial u}{\partial x}\bigg)^{p-1}dx&\leq
C_{0}\frac{2}{\varepsilon_{1}}p(p-1)\underset{\bar{\Omega}}{\sup}\bigg|\frac{\partial u}{\partial x}(x,t)\bigg|^{p-2}\bigg(|\mathcal{K}|*\|u(\cdot,t)\|^{2}_{W^{2,2}(\Omega)}\bigg)(t)\\
&
+
C_{0}p(p-1)\varepsilon_1(|\mathcal{K}|*1)(t)\int_{\Omega}\bigg(\frac{\partial u}{\partial x}(x,t)\bigg)^{p-2}
\bigg(\frac{\partial^{2} u}{\partial x^{2}}(x,t)\bigg)^{2}dx
\end{align*}
where the small quantity $\varepsilon_1$ being specified later.

Exploiting assumption \textbf{h4} and estimate \eqref{6.9*} to manage the first term in the right-hand side of this inequality, we easily conclude that
\begin{align*}
&I_{t}^{\nu}\bigg(\int_{\Omega}(\mathcal{K}*\mathcal{L}_{2}u)p\frac{\partial}{\partial x}\bigg(\frac{\partial u}{\partial x}\bigg)^{p-1}dx\bigg)
\\&\leq
\frac{C p(p-1)}{\varepsilon_1}I_{t}^{\nu}\bigg(\underset{\bar{\Omega}}{\sup}\bigg|\frac{\partial u}{\partial x}(x,t)\bigg|^{p-2}\bigg)(t)\mathcal{F}^{2}(u_0,g)\\
&
+
C_{0}p(p-1)\varepsilon_1\|\mathcal{K}\|_{L_1(0,T)}I_{t}^{\nu}\bigg(\int_{\Omega}\bigg(\frac{\partial u}{\partial x}(x,t)\bigg)^{p-2}
\bigg(\frac{\partial^{2} u}{\partial x^{2}}(x,t)\bigg)^{2}dx\bigg)(t).
\end{align*}
Applying restriction \eqref{6.12} to control the first term in the right-hand side leads to the estimate
\begin{align*}
&I_{t}^{\nu}\bigg(\int_{\Omega}(\mathcal{K}*\mathcal{L}_{2}u)p\frac{\partial}{\partial x}\bigg(\frac{\partial u}{\partial x}\bigg)^{p-1}dx\bigg)\\&\leq
\frac{2C p(p-1)}{\mathfrak{C}^{2}\varepsilon_1}I_{t}^{\nu}\bigg(\underset{\bar{\Omega}}{\sup}\bigg|\frac{\partial u}{\partial x}(x,t)\bigg|^{p}\bigg)(t)\\
&
+
C_{0}p(p-1)\varepsilon_1\|\mathcal{K}\|_{L_1(0,T)}I_{t}^{\nu}\bigg(\int_{\Omega}\bigg(\frac{\partial u}{\partial x}(x,t)\bigg)^{p-2}
\bigg(\frac{\partial^{2} u}{\partial x^{2}}(x,t)\bigg)^{2}dx\bigg)(t).
\end{align*}
Choosing
\[
\varepsilon_{1}=\frac{\delta_0}{4C_0\|\mathcal{K}\|_{L_1(0,T)}}
\]
and coming to estimate \eqref{6.13}, we easily draw
\begin{align}\label{6.13*}\notag
&\varrho(0)\int_{\Omega}\bigg(\frac{\partial u}{\partial x}\bigg)^{p}dx
+\frac{\delta_0}{2}I_{t}^{\nu}\bigg(\int_{\Omega}\bigg(\frac{\partial u}{\partial x}\bigg)^{p-2}\bigg(\frac{\partial^{2} u}{\partial x^{2}}\bigg)^{2}dx\bigg)(t)\\\notag
&
\leq C_{2}\bigg\{p(p-1)I_{t}^{\nu}\bigg(\underset{\bar{\Omega}}{\sup}\bigg|\frac{\partial u}{\partial x}\bigg|^{p}\bigg)(t)
+
I_{t}^{\nu-\mu_{N}}\bigg(\int_{\Omega}\bigg(\frac{\partial u}{\partial x}\bigg)^{p}dx\bigg)(t)\\
&
+
\int_{\Omega}\bigg(\frac{\partial u_{0}}{\partial x}\bigg)^{p}dx\bigg\}
,
\end{align}
where the positive constant $C_2$ is independent of $T_0,$ $\lambda, p$ and the norms of $u_0,g$.

Finally, to handle the term $I_{t}^{\nu}\bigg(\underset{\bar{\Omega}}{\sup}\bigg|\frac{\partial u}{\partial x}(x,t)\bigg|^{p}\bigg)$, we exploit bound \eqref{i.1} with $v=\bigg(\frac{\partial u}{\partial x}(x,t)\bigg)^{p/2}$ and have
\begin{align*}
\underset{\bar{\Omega}}{\sup}\bigg|\frac{\partial u}{\partial x}(x,t)\bigg|^{p}&\leq \frac{\varepsilon^{2}p^{2}}{4}\int_{\Omega}
\bigg(\frac{\partial u}{\partial x}(x,t)\bigg)^{p-2}\bigg(\frac{\partial^{2} u}{\partial x^{2}}(x,t)\bigg)^{2}dx
+
\varepsilon^{2}\int_{\Omega}\bigg(\frac{\partial u}{\partial x}(x,t)\bigg)^{p}dx
\\&
+
\frac{C}{\varepsilon}
\bigg(\int_{\Omega}
\bigg(\frac{\partial u}{\partial x}(x,t)\bigg)^{p/2}dx\bigg)^{2}
\end{align*}
with a small positive $\varepsilon$.

Then, setting here 
\[
\varepsilon=\frac{\varepsilon_0}{ \sqrt{|\Omega|}}
\]
with some positive $\varepsilon_0<1$ and computing the fractional integral, we arrive at
\begin{align*}
I_{t}^{\nu}\bigg(\underset{\bar{\Omega}}{\sup}\bigg|\frac{\partial u}{\partial x}(x,t)\bigg|^{p}\bigg)(t)&\leq \frac{\varepsilon_{0}^{2}p^{2}}{4[1-\varepsilon^{2}]|\Omega|}I_{t}^{\nu}\bigg(\int_{\Omega}
\bigg(\frac{\partial u}{\partial x}(x,t)\bigg)^{p-2}\bigg(\frac{\partial^{2} u}{\partial x^{2}}(x,t)\bigg)^{2}dx\bigg)(t)
\\&
+
\frac{C\sqrt{|\Omega|}}{\varepsilon_{0}(1-\varepsilon_{0}^{2})}
I_{t}^{\nu}\bigg(
\bigg(\int_{\Omega}
\bigg(\frac{\partial u}{\partial x}(x,t)\bigg)^{p/2}dx\bigg)^{2}\bigg)(t),
\end{align*}
Collecting this estimate with \eqref{6.13*} and choosing
\[
\varepsilon_{0}^{2}=\frac{\delta_0|\Omega|}{\delta_0|\Omega|+p^{2}C_2},
\]
we achieve  the bound
\begin{align}\label{6.14}\notag
&\int_{\Omega}\bigg(\frac{\partial u}{\partial x}(x,t)\bigg)^{p}dx+p(p-1)I_{t}^{\nu}\bigg(\int_{\Omega}\bigg(\frac{\partial u}{\partial x}\bigg)^{p-2}\bigg(\frac{\partial^{2} u}{\partial x^{2}}\bigg)^{2}dx\bigg)(t)\\\notag
&
\leq
C\bigg\{I_{t}^{\nu-\mu_{N}}\bigg(\int_{\Omega}\bigg(\frac{\partial u}{\partial x}\bigg)^{p}dx\bigg)(t)+\int_{\Omega}\bigg(\frac{\partial u_{0}}{\partial x}\bigg)^{p}dx\\
&
+
p^{3}
I_{t}^{\nu}\bigg(\int_{\Omega}\bigg(\frac{\partial u}{\partial x}\bigg)^{p/2}dx\bigg)^{2}(t),
\end{align}
where the positive constant $C$ is independent of $T_0$, $\lambda,$ $p$ and  the norms of $u_0$ and $g$.

In order to evaluate the term $I_{t}^{\nu-\mu_{N}}\bigg(\int_{\Omega}\bigg(\frac{\partial u}{\partial x}\bigg)^{p}dx\bigg)(t)$ in the right-hand side of \eqref{6.14}, we exploit the Gronwal-type inequality \cite[Proposition 4.3]{KPV3} and obtain
\[
\int_{\Omega}\bigg(\frac{\partial u}{\partial x}(x,t)\bigg)^{p}dx\leq A E_{\nu-\mu_{N}}(Ct^{\nu-\mu_{N}})\quad\text{for any}\quad t\in[0,T_0],
\]
where we put
\[
A=C\bigg[\int_{\Omega}\bigg(\frac{\partial u_0}{\partial x}\bigg)^{p}dx+p^{3}\bigg\|\int_{\Omega}\bigg(\frac{\partial u}{\partial x}\bigg)^{p/2}dx\bigg\|^{2}_{\C([0,T_0])}\bigg]
\]
and
\[
E_{\theta}(t)=\sum_{m=0}^{+\infty}\frac{z^{m}}{\Gamma(1+m\theta)}
\]
is the classical Mittag-Leffler function of the order $\theta$ (see, e.g., its definition in \cite[(2.2.4)]{GKMR}). 

Applying this estimate to handle the first term in the right-hand side of \eqref{6.14} and then taking into account formula (3.7.44) in \cite{GKMR} to compute the fractional integral of Mittag-Leffler function produce the inequality
\begin{align*}
\int_{\Omega}\bigg(\frac{\partial u}{\partial x}\bigg)^{p}dx&\leq
CE_{\nu-\mu_{N}}(CT^{\nu-\mu_{N}})
\bigg\{1+\int_{\Omega}\bigg(\frac{\partial u_0}{\partial x}\bigg)^{p}dx\\
&
+p^{3}\bigg\|\int_{\Omega}\bigg(\frac{\partial u}{\partial x}\bigg)^{p/2}dx\bigg\|^{2}_{\C([0,T_0])}\bigg\}
\end{align*}
At last, denoting
\[
\mathcal{B}_{0}=CE_{\nu-\mu_{N}}(CT^{\nu-\mu_{N}})\qquad
\mathcal{A}_{m}=\underset{t\in [0,T_0]}{\sup}
\bigg(\int_{\Omega}\bigg(\frac{\partial u}{\partial x}\bigg)^{p}dx\bigg)^{1/p}
\]
with $p=2^{m},$ $m\geq 1$, we derive the bound
\begin{align}\label{6.15}\notag
\mathcal{A}_{m}&\leq \mathcal{B}_{0}^{2^{-m}}[1+\|u_{0}\|_{W^{1,p}(\Omega)}+(2^{3m2^{-m}})\mathcal{A}_{m-1}]\\\notag
&
\leq
\mathcal{B}_{0}^{2^{-m}}[1+\|u_0\|_{\C^{1}(\bar{\Omega})}+
(2^{3m2^{-m}})\mathcal{A}_{m-1}]\\
&
\leq
\mathcal{B}_{0}^{2^{-m}}[1+\|u_0\|_{W^{2,2}(\Omega)}]+
(2^{3m2^{-m}})\mathcal{A}_{m-1}].
\end{align}
To handle the term $\|u_0\|_{\C^{1}(\bar{\Omega})}$ in these inequalities, we used the Sobolev embedding theorem.

At this point, we analyze two possibilities:

\noindent\textbf{(i)} either $\max\bigg\{1+\|u_0\|_{W^{2,2}(\Omega)};\mathcal{A}_{m-1}\bigg\}=1+\|u_0\|_{W^{2,2}(\Omega)}$,

\noindent\textbf{(ii)} or $\max\bigg\{1+\|u_0\|_{W^{2,2}(\Omega)};\mathcal{A}_{m-1}\bigg\}=\mathcal{A}_{m-1}$.

\noindent Obviously, in the case of \textbf{(i)} passing to the limit as $m\to+\infty$ in \eqref{6.15}, we end up with the desired bound
\[
\underset{\bar{\Omega}_{T_0}}{\sup}\bigg|\frac{\partial u}{\partial x}\bigg|\leq C[1+\|u_0\|_{W^{2,2}(\Omega)}].
\]
Conversely, if \textbf{(ii)} holds then
\[
\mathcal{A}_{m}\leq \mathcal{B}_{0}^{2^{-m}}\mathcal{A}_{m-1}
\leq
C\exp\bigg\{
\sum_{m=1}^{+\infty}m2^{-m}
\bigg\}\mathcal{A}_{1}\leq C\mathcal{A}_{1}
\]
and letting $m\to+\infty$, we deduce that
\[
\underset{\bar{\Omega}_{T_0}}{\sup}\bigg|\frac{\partial u}{\partial x}\bigg|\leq C \mathcal{A}_{1}.
\]
Finally, applying \eqref{6.9*} to control the term $\mathcal{A}_{1}$, we obtain \eqref{6.11}.
\qed
%%%%%%%%%%%%%%%%%%%%%%%%%%%%%%%%%%%%%%%%%%%%%%%%%%%%%%%%%%%%%%%%%%%%%%
 
%%%%%%%%%%%%%%%%%%%%%%%%%%%%%%%%%%%%%%%%%%%%%%%%%%%%%%%%%%%%%%%%%%%%%%

\subsection{The estimate of $\langle u\rangle_{t,{\Omega}_{T_0}}^{(\nu/2)}$}
\label{s6.3}

This subsection will be devoted to obtain H\"{o}lder regularity in time of the solution $u$. Namely, we aim to achieve the bound
\[
\langle u\rangle_{t,{\Omega}_{T_0}}^{(\nu/2)}\leq C\bigg[1+\|u_0\|_{W^{2,2}(\Omega)}+\bigg(\underset{t\in[0,T]}{\sup}I_{t}^{\nu}(
\|g\|^{2}_{L_{2}(\Omega)})(t)\bigg)^{1/2}+\underset{t\in[0,T]}{\sup}
\|g\|_{L_{2}(\Omega)}\bigg].
\]
It is apparent that this estimate is a simple consequence of the inequality
\begin{align}\label{6.16}\notag
\Delta_{h}u&=|u(x,t+h)-u(y,t)|\\
&\leq C\bigg[1+\|u_0\|_{W^{2,2}(\Omega)}+\bigg(\underset{t\in[0,T]}{\sup}I_{t}^{\nu}(
\|g\|^{2}_{L_{2}(\Omega)})(t)\bigg)^{1/2}+\underset{t\in[0,T]}{\sup}
\|g\|_{L_{2}(\Omega)}\bigg]
[h^{\nu/2}+|x-y|]
\end{align}
for any $x,y\in\bar{\Omega}, $ and $h\in(0,1)$ such that $x+h^{\nu/2}\in\bar{\Omega}$ and $t\in[0,T_0]$. Indeed, substituting  $x=y$ in \eqref{6.16}, we immediately arrive at the desired bound.

First, appealing to the integral mean value theorem, we conclude that
\begin{equation}\label{6.17}
\int_{x}^{x+h^{\nu/2}}[u(z,t+h)-u(z,t)]dz=h^{\nu/2}[u(x^{*},t+h)-u(x^{*},t)]
\end{equation} 
for some $x^{*}\in[x,x+h^{\nu/2}]$ and $t\in[0,T_0]$. Taking into account this relation and \eqref{6.11}, we deduce that
\begin{align}\label{6.18}\notag
\Delta_{h}u&\leq |u(x,t+h)-u(x^{*},t+h)|+|u(x^{*},t+h)-u(x^{*},t)|+|u(x^{*},t)-u(y,t)|\\\notag
&
\leq
C\{|x-x^{*}|+|x^{*}-y|\}\underset{\bar{\Omega}_{T_{0}}}{\sup}\bigg|\frac{\partial u}{\partial x}\bigg|
+
|u(x^*,t+h)-u(x^{*},t)|\\\notag
&
\leq
C[1+\|u_0\|_{W^{2,2}(\Omega)}+\|I_{t}^{\nu}\|g\|^{2}_{L_{2}(\Omega)}\|^{1/2}_{\C(\bar{\Omega}_{T})}]
[h^{\nu/2}+|x^{*}-y|]\\
&+
|u(x^*,t+h)-u(x^{*},t)|\equiv d_{1}+d_{2}.
\end{align}
At this point, we evaluate each term $d_i$, separately.

\noindent$\bullet$ As for $d_1$, we should to evaluate $|x^*-y|$ in appropriate way. To this end, we analyze
 three possibilities   to the location of $y$:
\begin{description}
 \item[i] if $y\geq x^{*},$ then $|y-x^{*}|\leq |y-x|$; 
\item[ii] if $x\leq y<x^{*},$ then $|y-x^{*}|\leq h^{\nu/2}$;
\item[iii] if $y<x,$ then $|y-x^{*}|\leq |x-y|+h^{\nu/2}$.
\end{description}
In summary, we derive  the bound
\[
d_1\leq C[ |x-y|+h^{\nu/2}]\bigg[1+\|u_0\|_{W^{2,2}(\Omega)}+\bigg(\underset{t\in[0,T]}{\sup}I_{t}^{\nu}(
\|g\|^{2}_{L_{2}(\Omega)})(t)\bigg)^{1/2}+\underset{t\in[0,T]}{\sup}
\|g\|_{L_{2}(\Omega)}\bigg].
\]

\noindent$\bullet$ Concerning the estimate of $d_2$, we have to evaluate the left-hand side of \eqref{6.17}. To this end, exploiting (3.5.4) in \cite{KST} and \cite[Chapter 1, Corollary 2]{SKM} arrives at the easily verified relations
\[
u(z,t+h)-u(z,t)=I_{t+h}^{\nu}\mathbf{D}_{t+h}^{\nu}u(z,t+h)-I_{t}^{\nu}\mathbf{D}_{t}^{\nu}u(z,t),
\]
and, therefore,
\begin{equation}\label{6.19}
|u(x^{*},t+h)-u(x^{*},t)|\leq C h^{\nu/2}\underset{[0,T_0]}{\sup}\bigg|\int_{x}^{x+h^{\nu/2}}\mathbf{D}_{t}^{\nu}u(z,s)dz\bigg|.
\end{equation}
Thus, we are left to evaluate the term $\int_{x}^{x+h^{\nu/2}}\mathbf{D}_{t}^{\nu}u(z,s)dz$. Appealing to (i) in Proposition \ref{p5.1*} provides the representation
\begin{equation}\label{6.20}
\mathbf{D}_{t}u=\sum_{j=1}^{3}\mathfrak{D}_{j}u+\varrho_{0}\mathbf{D}_{t}^{\nu}u,
\end{equation}
where we set
\begin{align*}
\mathfrak{D}_{1}u&=\sum_{i=1}^{M}\varrho_{i}\mathbf{D}_{t}^{\nu_i}u-\sum_{j=1}^{N}\gamma_{j}\mathbf{D}_{t}^{\mu_j}u,\quad
\mathfrak{D}_{2}u=u_0\{\mathbf{D}_{t}^{\nu}\varrho_{0}+\sum_{i=1}^{M}\mathbf{D}_{t}^{\nu_{i}}\varrho_{i}-\sum_{j=1}^{N}\mathbf{D}_{t}^{\mu_j}\gamma_{j}\},\\
\mathfrak{D}_{3}u&=\frac{\nu}{\Gamma(1-\nu)}\int_{0}^{t}(t-s)^{-1-\nu}[\varrho_{0}(t)-\varrho_{0}(s)][u(x,s)-u_0(x)]ds\\
&
+
\sum_{i=1}^{M}\frac{\nu_{i}}{\Gamma(1-\nu_{i})}\int_{0}^{t}(t-s)^{-1-\nu_{i}}[\varrho_{i}(t)-\varrho_{i}(s)][u(x,s)-u_0(x)]ds\\
&
-
\sum_{j=1}^{N}\frac{\mu_{j}}{\Gamma(1-\mu_{j})}\int_{0}^{t}(t-s)^{-1-\mu_{j}}[\gamma_{j}(t)-\gamma_{j}(s)][u(x,s)-u_0(x)]ds.
\end{align*}
Using representation \eqref{6.20} and the equation in \eqref{6.1}, we end up with
\begin{equation}\label{6.21}
\bigg|\int_{x}^{x+h^{\nu/2}}\mathbf{D}_{s}^{\nu}u(z,s)dz\bigg|\leq\sum_{j=1}^{6}|\mathcal{G}_{j}(u)|,
\end{equation}
where
\begin{align*}
\mathcal{G}_{1}u&=\varrho_{0}^{-1}\int_{x}^{x+h^{\nu/2}}\mathcal{L}_{1}u(z,s)dz,\qquad \qquad
\mathcal{G}_{2}u=\varrho_{0}^{-1}\int_{x}^{x+h^{\nu/2}}(\mathcal{K}*\mathcal{L}_{2}u)(z,s)dz,\\
\mathcal{G}_{3}u&=\varrho_{0}^{-1}\int_{x}^{x+h^{\nu/2}}[\lambda f(u)-g(z,s)]dz,\quad 
\mathcal{G}_{4}u=\varrho_{0}^{-1}\int_{x}^{x+h^{\nu/2}}\mathfrak{D}_{3}u(z,s)dz,\\
\mathcal{G}_{5}u&=\varrho_{0}^{-1}\int_{x}^{x+h^{\nu/2}}\mathfrak{D}_{2}u(z,s)dz,\qquad\qquad 
\mathcal{G}_{6}u=\varrho_{0}^{-1}\int_{x}^{x+h^{\nu/2}}\mathfrak{D}_{1}u(z,s)dz.
\end{align*}
Hence, we are left to treat each $\mathcal{G}_{i}u$.

\noindent$\bullet$ Taking into account \eqref{6.9} and \eqref{6.11}, assumptions \textbf{h2-h3}, and easily verified relation
\[
\int_{x}^{x+h^{\nu/2}}a_{2}(z,s)\frac{\partial^{2}u}{\partial z^{2}}(z,s)dz=
\int_{x}^{x+h^{\nu/2}}\frac{\partial }{\partial z}\bigg[a_2(z,s)\frac{\partial u}{\partial z}(z,s)\bigg]dz
-
\int_{x}^{x+h^{\nu/2}}\frac{\partial u}{\partial z}(z,s)\frac{\partial a_{2}}{\partial z}(z,s)dz,
\]
we immediately deduce that
\[
|\mathcal{G}_{1}u|\leq \delta_{1}^{-1}[1+h^{\nu/2}]C_{0}\bigg[1+\|u_0\|_{W^{2,2}(\Omega)}+\bigg(\underset{t\in[0,T]}{\sup}I_{t}^{\nu}(
\|g\|^{2}_{L_{2}(\Omega)})(t)\bigg)^{1/2}+\underset{t\in[0,T]}{\sup}
\|g\|_{L_{2}(\Omega)}\bigg].
\]

\noindent$\bullet$  It is worth noting that, using the similar arguments and making  assumption \textbf{h4} on the kernel $\mathcal{K}$, we get  the inequality
\[
|\mathcal{G}_{2}u|\leq\frac{C_0(1+h^{\nu/4})\|\mathcal{K}\|_{L_1(0,T)}}{\delta_{1}}\bigg[1+\|u_0\|_{W^{2,2}(\Omega)}+\bigg(\underset{t\in[0,T]}{\sup}I_{t}^{\nu}(
\|g\|^{2}_{L_{2}(\Omega)})(t)\bigg)^{1/2}+\underset{t\in[0,T]}{\sup}
\|g\|_{L_{2}(\Omega)}\bigg].
\]

\noindent$\bullet$ Assumptions \eqref{4.1} on $f(u)$, \textbf{h5} on $g$ and estimate \eqref{6.9} arrive at
\[
|\mathcal{G}_{3}u|\leq  \delta_{1}^{-1}h^{\nu/2}[1+L]\bigg[1+\|u_0\|_{W^{2,2}(\Omega)}+\bigg(\underset{t\in[0,T]}{\sup}I_{t}^{\nu}(
\|g\|^{2}_{L_{2}(\Omega)})(t)\bigg)^{1/2}+\underset{t\in[0,T]}{\sup}
\|g\|_{L_{2}(\Omega)}\bigg].
\]

\noindent$\bullet$ In light of the regularity of the coefficients $\varrho_0,\varrho_{i},\gamma_{j} $ and the function $u_0$ (see \textbf{h3} and \textbf{h5}), we use \eqref{6.9} and obtain the bound
\[
|\mathcal{G}_{4}u|+|\mathcal{G}_{5}u|\leq  C\delta_{1}^{-1}h^{\nu/2}\bigg[1+\|u_0\|_{W^{2,2}(\Omega)}+\bigg(\underset{t\in[0,T]}{\sup}I_{t}^{\nu}(
\|g\|^{2}_{L_{2}(\Omega)})(t)\bigg)^{1/2}+\underset{t\in[0,T]}{\sup}
\|g\|_{L_{2}(\Omega)}\bigg]
\]
with the constant $C$ depending only on $T,\nu,\mu_{j},\nu_{i},N,M$ and the norms of the coefficients.

\noindent$\bullet$ Concerning $\mathcal{G}_{6}u,$ we exploit the representation (10.34) in \cite{KPSV} and have
\[
|\mathcal{G}_{6}u|\leq \sum_{i=1}^{M}\varrho_{i}I_{s}^{\nu-\nu_{i}}\bigg|\int_{x}^{x+h^{\nu/2}}\mathbf{D}_{s}^{\nu}u(z,s)dz\bigg|
+
\sum_{j=1}^{N}\gamma_{j}I_{s}^{\nu-\mu_{j}}\bigg|\int_{x}^{x+h^{\nu/2}}\mathbf{D}_{s}^{\nu}u(z,s)dz\bigg|.
\]
Finally, gathering  these estimates with \eqref{6.21}, we obtain
\begin{align*}
\bigg|\int_{x}^{x+h^{\nu/2}}\mathbf{D}_{s}^{\nu}u(z,s)dz\bigg|&\leq C[1+h^{\frac{\nu}{2}}]\bigg[1+\|u_0\|_{W^{2,2}(\Omega)}+\bigg(\underset{t\in[0,T]}{\sup}I_{t}^{\nu}(
\|g\|^{2}_{L_{2}(\Omega)})(t)\bigg)^{1/2}+\underset{t\in[0,T]}{\sup}
\|g\|_{L_{2}(\Omega)}\bigg]\\
&
+\bigg[\sum_{i=1}^{M}\varrho_{i}I_{s}^{\nu-\nu_{i}}
+
\sum_{j=1}^{N}\gamma_{j}I_{s}^{\nu-\mu_{j}}\bigg]\bigg|\int_{x}^{x+h^{\nu/2}}\mathbf{D}_{s}^{\nu}u(z,s)dz\bigg|.
\end{align*}
Then, Gronwall-type inequality (4.4) \cite{KPV3} tells us that
\begin{equation}\label{6.22}
\bigg|\int_{x}^{x+h^{\nu/2}}\mathbf{D}_{s}^{\nu}u(z,s)dz\bigg|\leq C[1+h^{\frac{\nu}{2}}]\bigg[1+\|u_0\|_{W^{2,2}(\Omega)}+\bigg(\underset{t\in[0,T]}{\sup}I_{t}^{\nu}(
\|g\|^{2}_{L_{2}(\Omega)})(t)\bigg)^{1/2}+\underset{t\in[0,T]}{\sup}
\|g\|_{L_{2}(\Omega)}\bigg],
\end{equation}
where $C$ is independent  of $T_0,$ $\lambda, h$ and depends only on $\nu,\mu_{i},\nu_{j},N,M,$ $\|\mathcal{K}\|_{L_1(0,T)}$, and the corresponding norms of the coefficients.

As a result, \eqref{6.22} and \eqref{6.19} lead to
\begin{align*}
d_2&=|u(x^{*},t+h)-u(x^{*},t)|
\\
&\leq Ch^{\frac{\nu}{2}}[1+h^{\frac{\nu}{2}}]\bigg[1+\|u_0\|_{W^{2,2}(\Omega)}+\bigg(\underset{t\in[0,T]}{\sup}I_{t}^{\nu}(
\|g\|^{2}_{L_{2}(\Omega)})(t)\bigg)^{1/2}+\underset{t\in[0,T]}{\sup}
\|g\|_{L_{2}(\Omega)}\bigg],
\end{align*} 
or coming to \eqref{6.18} and, taking into account the estimate of $d_1$, we achieve
\[
\Delta_{h}u\leq C\{h^{\nu}+h^{\nu/2}+|x-y|\} 
\bigg[1+\|u_0\|_{W^{2,2}(\Omega)}+\bigg(\underset{t\in[0,T]}{\sup}I_{t}^{\nu}(
\|g\|^{2}_{L_{2}(\Omega)})(t)\bigg)^{1/2}+\underset{t\in[0,T]}{\sup}
\|g\|_{L_{2}(\Omega)}\bigg].
\]
It is apparent that (as we wrote above) this inequality provides  the desired estimate.

In summary, this bound completes the proof of \eqref{6.2}, and therefore, the verification \eqref{6.3} is finished.
\qed

%%%%%%%%%%%%%%%%%%%%%%%%%%%%%%%%%%%%%%%%%%%%%%%%%%%%%%%%%%%%%%%%%%%%%%
 
%%%%%%%%%%%%%%%%%%%%%%%%%%%%%%%%%%%%%%%%%%%%%%%%%%%%%%%%%%%%%%%%%%%%%%

\subsection{Verification of \eqref{6.4} if $t\in[0,T_{0}]$}
\label{s6.4}
In light of estimates \eqref{6.9} and \eqref{6.10}, we conclude that \eqref{6.4} with $T=T_{0}$ will be proved if we obtain the inequality
\begin{align}\label{6.23}\notag
&\|\mathbf{D}_{t}^{\nu}u\|_{L_{2}(\Omega_{T_0})}+\sum_{i=1}^{M}\|\mathbf{D}_{t}^{\nu_{i}}u\|_{L_{2}(\Omega_{T_0})}+
\sum_{j=1}^{N}\|\mathbf{D}_{t}^{\mu_{j}}u\|_{L_{2}(\Omega_{T_0})}
\\
&\leq C\bigg[1+\|u_0\|_{W^{2,2}(\Omega)}+\bigg(\underset{t\in[0,T]}{\sup}I_{t}^{\nu}(
\|g\|^{2}_{L_{2}(\Omega)})(t)\bigg)^{1/2}\bigg].
\end{align}
To this end, we use again representation \eqref{6.20} and the easily verified estimates
\begin{align}\label{6.24}\notag
\|\mathbf{D}_{t}u\|_{L_{2}(\Omega_{T_0})}&\leq C \bigg[1+\|u_0\|_{W^{2,2}(\Omega)}+\bigg(\underset{t\in[0,T]}{\sup}I_{t}^{\nu}(
\|g\|^{2}_{L_{2}(\Omega)})(t)\bigg)^{1/2}\bigg],\\\notag
\|\mathbf{D}^{\nu}_{t}u\|_{L_{2}(\Omega_{T_0})}&\leq\frac{C}{\delta_{1}}\|\mathbf{D}_{t}u-\sum_{j=1}^{3}\mathfrak{D}_{j}u\|_{L_{2}(\Omega_{T_0})}
\leq\frac{C}{\delta_{1}}[\|\mathbf{D}_{t}u\|_{L_{2}(\Omega_{T_0})}+\sum_{j=1}^{3}\|\mathfrak{D}_{j}u\|_{L_{2}(\Omega_{T_0})}],\\\notag
\|\mathcal{D}_{2}u\|_{L_{2}(\Omega_{T_0})}+\|\mathfrak{D}_{3}u\|_{L_{2}(\Omega_{T_0})}&\leq C \bigg[1+\|u_0\|_{W^{2,2}(\Omega)}+\bigg(\underset{t\in[0,T]}{\sup}I_{t}^{\nu}(
\|g\|^{2}_{L_{2}(\Omega)})(t)\bigg)^{1/2}\bigg]\\
&
\times
[\|\varrho_{0}\|_{\C^{1}([0,T_0])}+\sum_{i=1}^{M}\|\varrho_{i}\|_{\C^{1}([0,T_0])}
+\sum_{j=1}^{N}\|\gamma_{j}\|_{\C^{1}([0,T_0])}
]
\end{align}
with the constants being independent of $\lambda$ and $T_0$.
We remark that the last inequality in \eqref{6.24} is a simple consequence of assumption \textbf{h3} and estimates \eqref{6.9}
 and \eqref{6.11}.

Besides, collecting all these estimates provides the bound
\begin{align}\label{6.25}\notag
\|\mathbf{D}^{\nu}_{t}u\|_{L_{2}(\Omega_{T_0})}&\leq C\bigg[1+\|u_0\|_{W^{2,2}(\Omega)}+\bigg(\underset{t\in[0,T]}{\sup}I_{t}^{\nu}(
\|g\|^{2}_{L_{2}(\Omega)})(t)\bigg)^{1/2}
\bigg]\\
&+
C_{3}[\sum_{i=1}^{M}\|\mathbf{D}_{t}^{\nu_i}u\|_{L_{2}(\Omega_{T_0})}+\sum_{j=1}^{N}\|\mathbf{D}_{t}^{\mu_j}u\|_{L_{2}(\Omega_{T_0})}].
\end{align}
As a result, in virtue of the last estimate, we are left to evaluate   $\|\mathbf{D}_{t}^{\nu_i}u\|_{L_{2}(\Omega_{T_0})}$ and $\|\mathbf{D}_{t}^{\mu_j}u\|_{L_{2}(\Omega_{T_0})}$. To this end, we are exploit \cite[Theorem 2.2]{Y} (taking into account Definition \ref{d2.2} and  estimate \eqref{2.1})  and embedding Theorem 1.4.3.3 in \cite{Gr}. Indeed, setting
\[
\mu_0=\frac{1}{4}\min\{\mu_1;\nu-\mu_{N}\}\quad\text{and}\quad \nu_0=\frac{1}{4}\min\{\nu_1;\nu-\nu_{M}\},
\]
and choosing $\bar{\nu}$ satisfying inequalities
\[
0<\bar{\nu}<\min\{\nu-\mu_{N}-\mu_{0};\nu-\nu_{M}-\nu_{0}\},
\]
we take advantage of  \cite[Theorem 2.2]{Y}, \cite[Propositions 3 and 7]{Y2} and then embedding Theorem 1.4.3.3 in \cite{Gr} to conclude that
\begin{align}\label{6.26}\notag
&C_{\nu}\|u-u_0\|_{W^{\nu-\bar{\nu}}((0,T_0),L_2(\Omega))}\leq \|\mathbf{D}_{t}^{\nu}u\|_{L_2(\Omega_{T_0})},\\\notag
&\sum_{i=1}^{M}
\|\mathbf{D}_{t}^{\nu_i}u\|_{L_{2}(\Omega_{T_0})}+\sum_{j=1}^{N}\|\mathbf{D}_{t}^{\mu_j}u\|_{L_{2}(\Omega_{T_0})}\leq
\sum_{i=1}^{M}
C_{\nu_{i}}\|u-u_0\|_{W^{\nu_{i}+\nu_{0}}((0,T_0),L_2(\Omega))}\\\notag&
+\sum_{j=1}^{N}C_{\mu_{j}}\|u-u_0\|_{W^{\mu_{j}+\mu_{0}}((0,T_0),L_2(\Omega))}\\\notag
&
\leq
\varepsilon[\sum_{i=1}^{M}
C_{\nu_{i}}+\sum_{j=1}^{N}C_{\mu_{j}}]\|u-u_0\|_{W^{\nu-\bar{\nu}}((0,T_0),L_2(\Omega))}\\
&+
C\bigg\{\sum_{j=1}^{N}C_{\mu_{j}}\varepsilon^{-\frac{\mu_{j}+\mu_0}{\nu-\bar{\nu}-\mu_{j}-\mu_0}}
+
\sum_{i=1}^{M}C_{\nu_{i}}\varepsilon^{-\frac{\nu_{i}+\nu_0}{\nu-\bar{\nu}-\nu_{i}-\nu_0}}
\bigg\}\|u-u_{0}\|_{L_{2}(\Omega_{T_0})},
\end{align}
where $C_{\nu}=C(\nu,\bar{\nu}),$ $C_{\nu_{i}}=C(\nu_i,\nu_0),$ $C_{\mu_{j}}=C(\mu_{j},\mu_{0}),$ are positive constants defined in (i) of Theorem 2.2 \cite{Y}.

Gathering \eqref{6.25} and \eqref{6.26} and setting
\[
\varepsilon=\frac{C_\nu}{4C_3[\sum_{j=1}^{N}C_{\mu_{j}}+\sum_{i=1}^{M}C_{\nu_{i}}]},
\]
we end up with the bound
\begin{equation}\label{6.27}
\|u-u_{0}\|_{W^{\nu-\bar{\nu}}((0,T_0),L_{2}(\Omega))}\leq C \bigg[1+\|u_0\|_{W^{2,2}(\Omega)}+\bigg(\underset{t\in[0,T]}{\sup}I_{t}^{\nu}(
\|g\|^{2}_{L_{2}(\Omega)})(t)\bigg)^{1/2}\bigg]
\end{equation}
with the positive quantity $C$ being independent of $\lambda$ and $T_0$. We notice that, we exploited \eqref{6.9} to control the term $\|u-u_{0}\|_{L_{2}(\Omega_{T_0})}$ in the right-hand side of the second inequality in \eqref{6.26}.

After that, \eqref{6.27} together with the second inequality in \eqref{6.26} arrive at the estimate
\[
\sum_{i=1}^{M}\|\mathbf{D}_{t}^{\nu_i}u\|_{L_{2}(\Omega_{T_0})}+\sum_{j=1}^{N}\|\mathbf{D}_{t}^{\mu_j}u\|_{L_{2}(\Omega_{T_0})}\leq
C \bigg[1+\|u_0\|_{W^{2,2}(\Omega)}+\bigg(\underset{t\in[0,T]}{\sup}I_{t}^{\nu}(
\|g\|^{2}_{L_{2}(\Omega)})(t)\bigg)^{1/2}\bigg],
\]
which in turn (see \eqref{6.25}) provides the desired bound 
\[
\|D_{t}^{\nu}u\|_{L_{2}(\Omega_{T_0})}\leq C  \bigg[1+\|u_0\|_{W^{2,2}(\Omega)}+\bigg(\underset{t\in[0,T]}{\sup}I_{t}^{\nu}(
\|g\|^{2}_{L_{2}(\Omega)})(t)\bigg)^{1/2}\bigg].
\]
This finishes the proof of \eqref{6.23} and, hence, \eqref{6.4} with $T=T_{0}$.
\qed

%%%%%%%%%%%%%%%%%%%%%%%%%%%%%%%%%%%%%%%%%%%%%%%%%%%%%%%%%%%%%%%%%%%%%%
 
%%%%%%%%%%%%%%%%%%%%%%%%%%%%%%%%%%%%%%%%%%%%%%%%%%%%%%%%%%%%%%%%%%%%%%

\subsection{Conclusion of the proof of Lemma \ref{l6.1}}
\label{s6.5}
In light of the results described in Sections \ref{s6.1}-\ref{s6.4}, we are left to extend estimates \eqref{6.2}-\eqref{6.4} on the whole time interval $[0,T]$. To this end, we first discuss the technique which allows us to extend these estimates to the interval $[0,3T_0/2]$. Then, recasting this procedure a finite number of times until the entire $[0,T]$ is exhausted.

First, we need in new function
\begin{equation}\label{6.28}
\mathcal{U}(x,t)=\xi(t)u(x,t),
\end{equation}
where $\xi(t)$ is defined in \eqref{5.2***} with $T_3=T_{0}$, and $u(x,t)$ is a  solution of \eqref{6.1}.

Next statement describes the main properties of this function.
\begin{corollary}\label{c6.1}
The function $\mathcal{U}(x,t)$ solves the problem \eqref{6.1} in $\bar{\Omega}_{T_0/2}$ and satisfies estimates:
\begin{align*}
&\|\mathcal{U}\|_{\C^{2+\alpha,\frac{2+\alpha}{2}\nu}(\bar{\Omega}_{T})}+\sum_{j=1}^{N}\|\mathbf{D}_{t}^{\mu_j}\mathcal{U}\|_{\C^{\alpha,\frac{\alpha\nu}{2}}(\bar{\Omega}_{T})}+\sum_{i=1}^{M}\|\mathbf{D}_{t}^{\nu_i}\mathcal{U}\|_{\C^{\alpha,\frac{\alpha\nu}{2}}(\bar{\Omega}_{T})}
\leq C[1+\|u_0\|_{\C^{2+\alpha}(\bar{\Omega})}+\|g\|_{\C^{\alpha,\frac{\nu\alpha}{2}}(\bar{\Omega}_{T})}],\\
&\|\mathcal{U}\|_{\C([0,T],\C^{1}(\bar{\Omega}))}+\langle\mathcal{U}\rangle_{t,\Omega_{T}}^{(\nu/2)}\leq
C\bigg[1+\bigg(\underset{t\in[0,T]}{\sup}I_{t}^{\nu}(
\|g\|^{2}_{L_{2}(\Omega)})(t)\bigg)^{1/2}+\underset{t\in[0,T]}{\sup}
\|g\|_{L_{2}(\Omega)}+\|u_0\|_{W^{2,2}(\Omega)}\bigg],\\
&\|\mathcal{U}\|_{L_{2}((0,T),W^{2,2}(\Omega))}+\|\mathbf{D}_{t}^{\nu}\mathcal{U}\|_{L_{2}(\Omega_{T})}+\sum_{j=1}^{N}\|\mathbf{D}_{t}^{\mu_j}\mathcal{U}\|_{L_{2}(\Omega_{T})}+\sum_{i=1}^{M}\|\mathbf{D}_{t}^{\nu_i}\mathcal{U}\|_{L_{2}(\Omega_{T})}
+\bigg(\underset{t\in[0,T]}{\sup}I_{t}^{\nu}\|\mathcal{U}_{xx}\|^{2}_{L_2(\Omega)}\bigg)^{\frac{1}{2}}
\\
&
\leq
C\bigg[1+\bigg(\underset{t\in[0,T]}{\sup}I_{t}^{\nu}(
\|g\|^{2}_{L_{2}(\Omega)})(t)\bigg)^{1/2}+\|u_0\|_{W^{2,2}(\Omega)}\bigg],\\
&
\|\mathbf{D}_{t}\mathcal{U}-\mathcal{L}_{1}\mathcal{U}-\mathcal{K}*\mathcal{L}_{2}\mathcal{U}\|_{\C^{\alpha,\frac{\alpha\nu}{2}}(\bar{\Omega}_{T})}\leq 
C\bigg[1+\bigg(\underset{t\in[0,T]}{\sup}I_{t}^{\nu}(
\|g\|^{2}_{L_{2}(\Omega)})(t)\bigg)^{\frac{1}{2}}+\|u_0\|_{W^{2,2}(\Omega)}+\|g\|_{\C^{\alpha,\frac{\nu\alpha}{2}}(\bar{\Omega}_{T})}\bigg],\\
&
\underset{t\in[0,T]}{\sup}I_{t}^{\nu}(
\|\mathbf{D}_{t}\mathcal{U}-\mathcal{L}_{1}\mathcal{U}-\mathcal{K}*\mathcal{L}_{2}\mathcal{U}\|^{2}_{L_{2}(\Omega)})\leq
C\bigg[1+\underset{t\in[0,T]}{\sup}I_{t}^{\nu}(
\|g\|^{2}_{L_{2}(\Omega)})(t)+\|u_0\|^{2}_{W^{2,2}(\Omega)}\bigg]
\end{align*}
Here the positive constant $C$ depends only on $T,\mu_{j},\nu_{i},\nu$ and the corresponding norms of the coefficients of the operators $\mathcal{L}_{1},$ $\mathcal{L}_{2}$ and $\mathbf{D}_{t}$.
\end{corollary}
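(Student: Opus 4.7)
The approach is to read off each bound for $\mathcal{U}=\xi u$ from the a priori estimates \eqref{6.2}--\eqref{6.4} established for $u$ on $[0,T_0]$, using Lemma \ref{l5.0} with $T_3=T_0$ as the transfer mechanism. Since $\xi\equiv 1$ on $[0,T_0/2]$, the function $\mathcal{U}$ coincides with $u$ there, hence it solves \eqref{6.1} on $\bar{\Omega}_{T_0/2}$; moreover $\mathcal{U}\equiv 0$ for $t\geq 3T_0/4$, so no regularity issues arise as $t\to T$.

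For the classical H\"{o}lder-type bound on $\mathcal{U}$ together with the H\"{o}lder bounds on $\mathbf{D}_t^{\nu_i}\mathcal{U}$ and $\mathbf{D}_t^{\mu_j}\mathcal{U}$, I would combine points (i), (ii), (iii), (v) of Lemma \ref{l5.0} (the latter applied with $w_1\equiv 1$ and $\theta_1\in\{\nu_i,\mu_j\}$) with inequality \eqref{6.3}. The bound on $\|\mathcal{U}\|_{\C([0,T],\C^1(\bar{\Omega}))}+\langle\mathcal{U}\rangle_{t,\Omega_T}^{(\nu/2)}$ follows analogously from points (i) and (iii) of Lemma \ref{l5.0} combined with \eqref{6.2}. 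The $L_2$ estimates on $\mathcal{U}$, its second spatial derivative, and the fractional derivatives $\mathbf{D}_t^\theta\mathcal{U}$ for $\theta\in\{\nu,\nu_i,\mu_j\}$ are immediate consequences of point (vi) of Lemma \ref{l5.0} applied to \eqref{6.4}. The quantity $(\sup_{t\in[0,T]}I_t^\nu\|\mathcal{U}_{xx}\|_{L_2(\Omega)}^2)^{1/2}$ is dominated by the corresponding supremum on $[0,T_0]$, since $|\xi|\leq 1$ and $\xi$ vanishes beyond $3T_0/4$; this is precisely controlled by \eqref{6.9*}.

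The two bounds involving $\mathbf{D}_t\mathcal{U}-\mathcal{L}_1\mathcal{U}-\mathcal{K}*\mathcal{L}_2\mathcal{U}$ are more delicate because $\mathbf{D}_t$ does not commute with multiplication by $\xi(t)$. Invoking Proposition \ref{p5.1*}(i) separately for each summand of \eqref{1.4}, I would write $\mathbf{D}_t\mathcal{U}=\xi\,\mathbf{D}_t u+\mathcal{R}(\xi,u)$, where $\mathcal{R}(\xi,u)$ gathers the commutator terms of the shape $u_0\,\mathbf{D}_t^\theta(\xi\varrho_k)$ and the integrals $\mathfrak{I}_\theta(t;\xi\varrho_k,u)$ supplied by that proposition. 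Using \eqref{6.1} and the fact that $\mathcal{L}_k$ commute with $\xi(t)$ (while the convolution with $\mathcal{K}$ does not, producing an additional discrepancy $\xi(\mathcal{K}*\mathcal{L}_2 u)-\mathcal{K}*(\xi\mathcal{L}_2 u)$), one obtains the representation
\begin{equation*}
\mathbf{D}_t\mathcal{U}-\mathcal{L}_1\mathcal{U}-\mathcal{K}*\mathcal{L}_2\mathcal{U}=\xi[g-\lambda f(u)]+\mathcal{R}(\xi,u)+\bigl[\xi(\mathcal{K}*\mathcal{L}_2 u)-\mathcal{K}*(\xi\mathcal{L}_2 u)\bigr].
\end{equation*}
The H\"{o}lder norm of the first piece is controlled by \textbf{h5(i)}, the local Lipschitz character of $f$ from \textbf{h6(i)}, and \eqref{6.3}; the commutator $\mathcal{R}(\xi,u)$ is governed by points (iv) and (v) of Lemma \ref{l5.0}; the memory commutator is bounded by splitting the convolution at $3T_0/4$, as in the proof of Lemma \ref{l5.0}(ii), using assumption \textbf{h4} on $\mathcal{K}$ jointly with \eqref{6.3}. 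The $L_2$-in-time variant follows by the same decomposition, replacing the H\"{o}lder control by the $L_2$ bounds from Lemma \ref{l5.0}(vi) and \eqref{6.4}, and Young's inequality for convolutions.

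The main obstacle is verifying that the commutator $\mathcal{R}(\xi,u)$ lies in $\C^{\alpha,\frac{\alpha\nu}{2}}(\bar{\Omega}_T)$; this rests on the sharp case analysis of the time locations $(t_1,t_2)$ performed in Lemma \ref{l5.0}(iv). Once that lemma is in hand, the remaining assembly reduces to bookkeeping with the already-established inequalities \eqref{6.2}--\eqref{6.4}.
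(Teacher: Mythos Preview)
Your proposal is correct and follows essentially the same route as the paper: the first three estimates are transferred from \eqref{6.2}--\eqref{6.4} on $[0,T_0]$ to $[0,T]$ via Lemma \ref{l5.0}, and the last two are obtained from the decomposition $\mathbf{D}_t\mathcal{U}-\mathcal{L}_1\mathcal{U}-\mathcal{K}*\mathcal{L}_2\mathcal{U}=s_1+s_2+s_3$ with $s_1=\xi[g-\lambda f(u)]$, $s_2=\mathbf{D}_t(\xi u)-\xi\mathbf{D}_t u$, $s_3=\xi(\mathcal{K}*\mathcal{L}_2 u)-\mathcal{K}*(\xi\mathcal{L}_2 u)$, each piece being controlled exactly as you outline. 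Two minor points: the paper invokes \eqref{6.2} rather than \eqref{6.3} when bounding $s_1$ and $s_3$ (which matches the $W^{2,2}$-norm of $u_0$ appearing on the right-hand side of the fourth inequality), and in your description of $\mathcal{R}(\xi,u)$ the correct structure from Proposition \ref{p5.1*}(i) applied with $w_1=\xi$, $w_2=\varrho_k u$ gives terms $\varrho_k(0)u_0\,\mathbf{D}_t^\theta\xi$ and $\mathfrak{I}_\theta(t;\xi,\varrho_k u)$ rather than $u_0\,\mathbf{D}_t^\theta(\xi\varrho_k)$ and $\mathfrak{I}_\theta(t;\xi\varrho_k,u)$---but this is precisely what the second bound in Lemma \ref{l5.0}(v) handles, so your citation is right even if the displayed shape is slightly off.
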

\begin{proof}
Clearly,  the first three inequalities are simple consequences of definition \eqref{6.28}, Lemma \ref{l5.0} (where $T_3=T_0$) and estimates \eqref{6.2}-\eqref{6.4} with $T=T_0$, which are proved in Sections \ref{s6.1}-\ref{s6.4}.

As for the fourth bound,
taking into account that $u$ solves \eqref{6.1}, we have the representation
\[
\mathbf{D}_{t}\mathcal{U}-\mathcal{L}_{1}\mathcal{U}-\mathcal{K}*\mathcal{L}_{2}\mathcal{U}=\sum_{j=1}^{3}s_{j},
\]
where
\[
s_1=\xi[g(x,t)-\lambda f(u)],\quad
s_2=\mathbf{D}_{t}(\xi u)-\xi\mathbf{D}_{t}u,\quad
s_3=\xi(\mathcal{K}*\mathcal{L}_2u)(t)-(\mathcal{K}*\xi\mathcal{L}_{2}u)(t).
\]
At this point, we evaluate each $s_j$, separately. 

\noindent $\bullet$ Appealing to \textbf{h5} and \textbf{h6 (i)}, we derive
\[
\|s_1\|_{\C^{\alpha,\alpha\nu/2}(\bar{\Omega}_{T})}\leq C[1+\|g\|_{\C^{\alpha,\alpha\nu/2}(\bar{\Omega}_{T})}
+\|u\|_{\C^{\alpha,\alpha\nu/2}(\bar{\Omega}_{T_0})}],
\]
where the constant $C$ depends only on $\|\xi\|_{\C^{1}([0,T])}$, $L,C_{\rho}$. 
Then exploiting inequality \eqref{6.2}, we end up with
\[
\|s_1\|_{\C^{\alpha,\alpha\nu/2}(\bar{\Omega}_{T})}\leq C[1+\|g\|_{\C^{\alpha,\alpha\nu/2}(\bar{\Omega}_{T})}
+
\|u_0\|_{W^{2,2}(\Omega)}
].
\]

\noindent $\bullet$ Collecting the representation of the operator $\mathbf{D}_{t}$ with statement (v) in Lemma \ref{c5.0} and the bound \eqref{6.2} with $T=T_0$ provides the estimate
\begin{align*}
\|s_2\|_{\C^{\alpha,\alpha\nu/2}(\bar{\Omega}_{T})}&\leq C[1+\|u_0\|_{\C^{\alpha}(\bar{\Omega})}
+\|u\|_{\C^{\alpha,\nu/2}(\bar{\Omega}_{T_0})}]\\
&
\leq
C\bigg[\|u_0\|_{W^{2,2}(\Omega)}+1+\bigg(\underset{t\in[0,T]}{\sup}I_{t}^{\nu}(
\|g\|^{2}_{L_{2}(\Omega)})(t)\bigg)^{\frac{1}{2}}\bigg]
\end{align*}
with the constant $C$ being independent of $\lambda$ and $T_0$.

\noindent $\bullet$ Standard calculations allow us to rewrite $s_3$ in the form
\[
s_3=
\begin{cases}
\xi(t)(\mathcal{K}*\mathcal{L}_{2}u)(t)-(\mathcal{K}*\xi\mathcal{L}_{2}u)(t),\qquad\qquad t\in(0,3T_{0}/2],\\
\,\\
\int_{0}^{3T_0/2}\mathcal{K}(t-\tau)\xi(\tau)\mathcal{L}_{2}u(x,\tau)d\tau,\quad t>3T_{0}/2.
\end{cases}
\]
After that, \cite[Lemma 4.1]{KPV1}, Lemma \ref{l5.0} and assumptions of \textbf{h2, h4} lead to the estimate
\[
\|s_3\|_{\C^{\alpha,\alpha\nu/2}(\bar{\Omega}_{T})}
\leq
C[\|u_0\|_{W^{2,2}(\Omega)}+1+\|I_{t}^{\nu}\|g\|^{2}_{L_{2}(\Omega)}\|^{1/2}_{\C([0,T])}].
\]
As a result, gathering all estimates of $s_{i}$ leads to the searched estimate.

Finally, we left remark that the verification of the last inequality in this corollary is carried out with the similar arguments and with exploiting the second and the third inequalities of this claim.
This completes the proof of Corollary \ref{c6.1}.
\end{proof}
Now we introduce new unknown function
\begin{equation}\label{6.29}
\mathcal{V}=u-\mathcal{U}
\end{equation}
which solves the problem
\[
\begin{cases}
\mathbf{D}_{t}\mathcal{V}-\mathcal{L}_1\mathcal{V}-\mathcal{K}*\mathcal{L}_2\mathcal{V}=g^{*}(x,t)-\lambda f^{*}(\mathcal{V})\quad \text{in}\quad \Omega_{3T_0/2},\\
\mathcal{V}(x,0)=0\quad\text{in}\quad\bar{\Omega},\\
\mathcal{V}(x,t)=0\quad\text{on}\quad \partial\Omega_{3T_0/2},
\end{cases}
\]
where we put
\[
g^{*}(x,t)=g(x,t)-\mathbf{D}_{t}\mathcal{U}+\mathcal{L}_1\mathcal{U}+\mathcal{K}*\mathcal{L}_2\mathcal{U},\quad
f^{*}(\mathcal{V})=f(\mathcal{V}+\mathcal{U}).
\]
The definition of the function $\mathcal{U}$ and Corollary \ref{c6.1} readily yield
\begin{align*}
\|g^{*}\|_{\C^{\alpha,\alpha\nu/2}(\bar{\Omega}_{3T_0/2})}&\leq C[1+\|u_0\|_{W^{2,2}(\Omega)}+\|g\|_{\C^{\alpha,\alpha\nu/2}(\bar{\Omega}_{T})}],\\
g^{*}(x,t)-\lambda f^{*}(\mathcal{V})&=0\quad\text{if}\quad t\in[0,T_0/2],\quad x\in\bar{\Omega},
\end{align*}
and $ f^{*}(\mathcal{V})$ meets requirements \textbf{h6 (i)}. Besides, the last equality here means that compatibility conditions hold. Finally, we introduce the new time-variable
\[
\sigma=t-\frac{T_{0}}{2},\quad \sigma\in[-T_0/2,T_0]
\]
and recast arguments of the end of Section 6.3 in \cite{PSV6}. Thus, we deduce
\begin{equation}\label{6.30}
\begin{cases}
\bar{\mathbf{D}}_{\sigma}\bar{\mathcal{V}}-\bar{\mathcal{L}}_1\bar{\mathcal{V}}-\mathcal{K}*\bar{\mathcal{L}}_2\bar{\mathcal{V}}=\bar{g}^{*}(x,\sigma)-\lambda \bar{f}^{*}(\bar{\mathcal{V}})\quad \text{in}\quad \Omega_{T_0},\\
\bar{\mathcal{V}}(x,0)=0\quad\text{in}\quad\bar{\Omega},\\
\bar{\mathcal{V}}(x,\sigma)=0\quad\text{on}\quad \partial\Omega_{T_0},
\end{cases}
\end{equation}
and $\bar{\mathcal{V}}(x,\sigma)=0$ if $\sigma\in[-T_0/2,0]$.
Here we denote
\[
\bar{\mathcal{V}}(x,\sigma)=\mathcal{V}(x,\sigma+T_0/2),\quad \bar{g}^{*}(x,\sigma)=g^{*}(x,\sigma+T_0/2)\quad
\bar{f}^{*}(\bar{\mathcal{V}})=f^{*}(\mathcal{V})|_{t=\sigma+T_0/2},
\]
and we call $\bar{\mathcal{L}}_{i}$, $\bar{\mathbf{D}}_{\sigma}$ the operators $\mathcal{L}_{i}$ and $\mathbf{D}_{\sigma}$, respectively, with the bar coefficients. It is easy to verify that the coefficients $\bar{\mathcal{L}}_{i}$, $\bar{\mathbf{D}}_{\sigma}$  and the function $\bar{g}^{*}$, $\bar{f}^{*}$ meet the requirements of Lemma \ref{l6.1}.

Then, we repeat the arguments of Sections \ref{s6.1}-\ref{s6.4} in the case of problem \eqref{6.30} and obtain estimates \eqref{6.2}-\eqref{6.4} to the function $\bar{\mathcal{V}}$. In fine, taking into account representation \eqref{6.29} and Corollary \ref{c6.1}, we extend these estimates to the interval $[0,3T_0/2]$. Therefore, after repeating this procedure finite times, we get \eqref{6.2}-\eqref{6.4} for all $t\in[0,T]$.
\qed

%%%%%%%%%%%%%%%%%%%%%%%%%%%%%%%%%%%%%%%%%%%%%%%%%%%%%%%%%%%%%%%%%%%%%%
 
%%%%%%%%%%%%%%%%%%%%%%%%%%%%%%%%%%%%%%%%%%%%%%%%%%%%%%%%%%%%%%%%%%%%%%

\section{Proof of Theorem \ref{t4.1}}
\label{s7}

\noindent Here we proceed with a detailed proof of this theorem in the case of \textbf{DBC} \eqref{1.2} and $f(u)$ satisfying \textbf{h6 (i)}.
Another cases are analyzed with the similar arguments and left to the readers.

First of all, we reduce problem \eqref{1.1}, \eqref{1.2}, \eqref{1.3} to the problem with homogeneous initial and boundary conditions. To this end, we apply \cite[Remark 3.1]{KPV3} and Lemma \ref{l5.2} (in this art) to the linear problem for the unknown function $\mathfrak{U}=\mathfrak{U}(x,t):\Omega_{T}\to\R:$
\[
\begin{cases}
\mathbf{D}_{t}\mathfrak{U}-\mathcal{L}_1\mathfrak{U}-\mathcal{K}*\mathcal{L}_2\mathfrak{U}=g(x,t)-f(u_0)\quad \text{in}\, \Omega_{T},\\
\mathfrak{U}(x,0)=u_0(x)\quad\text{in}\quad\bar{\Omega},\\
\mathfrak{U}(x,t)=\psi(x,t)\quad\text{on}\quad \partial\Omega_{T},
\end{cases}
\]
and obtain the one-valued global classical solvability of this problem $\mathfrak{U}\in\C^{2+\alpha,\frac{2+\alpha}{2}\nu}(\bar{\Omega}_{T})$ satisfying the bound
\begin{align*}
&\|\mathfrak{U}\|_{\C^{2+\alpha,\frac{2+\alpha}{2}\nu}(\bar{\Omega}_{T})}+\sum_{i=1}^{M}\|\mathbf{D}_{t}^{\nu_{i}}\mathfrak{U}\|_{\C^{\alpha,\frac{\nu\alpha}{2}}(\bar{\Omega}_{T})}
+\sum_{j=1}^{N}\|\mathbf{D}_{t}^{\mu_{j}}\mathfrak{U}\|_{\C^{\alpha,\frac{\nu\alpha}{2}}(\bar{\Omega}_{T})}\\
&\leq
C[1+\|u_0\|_{\C^{2+\alpha}(\bar{\Omega})}+
\|g\|_{\C^{\alpha,\frac{\nu\alpha}{2}}(\bar{\Omega}_{T})}+
\|\psi\|_{\C^{2+\alpha,\frac{2+\alpha}{2}\nu}(\partial\Omega_{T})}
]\\
&\equiv C\mathfrak{G}(u_0,f,\psi).
\end{align*}
Here we used assumption \textbf{h6 (i)} and \cite[Remark 3.1]{KPV3} to control the term $\|f(u_0)\|_{\C^{\alpha,\frac{\nu\alpha}{2}}(\bar{\Omega}_{T})}$. Then we look for a solution of the original problem \eqref{1.1}, \eqref{1.2}, \eqref{1.3} in the form
\[
u(x,t)=v(x,t)+\mathfrak{U}(x,t),
\]
where the unknown function $v$ solves the problem
\begin{equation}\label{7.1}
\begin{cases}
\mathbf{D}_{t}v-\mathcal{L}_1v-\mathcal{K}*\mathcal{L}_2v=G(x,t)- F(v)\quad \text{in}\quad \Omega_{T},\\
v(x,0)=0\quad\text{in}\quad\bar{\Omega},\\
v(x,t)=0\quad\text{on}\quad \partial\Omega_{T}.
\end{cases}
\end{equation}
Here we set
\[
G(x,t)=f(u_0)-f(\mathfrak{U}),\qquad F(v)=f(v+\mathfrak{U})-f(\mathfrak{U}).
\]
\begin{remark}\label{r7.1} Assumptions \textbf{h6 (i)} and the estimate of $\mathfrak{U}$ readily ensure the following relations to the functions $F$ and $G$:
\[
\|G\|_{\C^{\alpha,\frac{\nu\alpha}{2}}(\bar{\Omega}_{T})}\leq C\mathfrak{G}(u_0,f,\psi),
\]
and for all $v_{i}\in[-\rho,\rho]$ and $v\in\R$ there holds
\[
|F(v_1)-F(v_2)|\leq C_{\rho}|v_1-v_2|,\quad |F(v)|\leq L^{*}(1+|v|),
\]
with
\[
L^{*}=L(1+2\underset{\bar{\Omega}_{T}}{\sup}|\mathfrak{U}|)\leq L[1+2C\mathfrak{G}(u_0,f,\psi)].
\]
Moreover, the direct calculations and the properties of the function $\mathfrak{U}$ arrive at the equalities:
\[
G(x,0)=0\quad\text{for}\quad x\in\bar{\Omega},\quad F(0)=0\quad\text{for each}\quad (x,t)\in\bar{\Omega}_{T}.
\]
\end{remark}
Thus, we easily conclude that $F$ and $G$ meet requirements of Theorem \ref{t4.1} and, hence, we are left to prove this theorem in the case of problem \eqref{7.1}.

To this end, we rely on the so-called continuation arguments, in analogy to the case of semilinear problem to the subdiffusion equations with a single-term fractional derivative described in \cite[Section 5.2]{KPV3}. This approach deals with the analysis of the family of problems for $\lambda\in[0,1]$,
\begin{equation}\label{7.2}
\begin{cases}
\mathbf{D}_{t}v-\mathcal{L}_1v-\mathcal{K}*\mathcal{L}_2v=G(x,t)-\lambda F(v)\quad \text{in}\, \Omega_{T},\\
v(x,0)=0\quad\text{in}\quad\bar{\Omega},\\
v(x,t)=0\quad\text{on}\quad \partial\Omega_{T}.
\end{cases}
\end{equation}
Let $\Lambda$ be the set of those $\lambda$ for which \eqref{7.2} is solvable on $[0,T]$. For $\lambda=0$, \eqref{7.2} is a linear problem studied in detailed in \cite{PSV6} (see  also Lemma \ref{l5.2} here). Hence, keeping in mind assumptions \textbf{h1}-\textbf{h5}, \textbf{h7} and Remark \ref{r7.1}, we can apply Lemma \ref{l5.2} to \eqref{7.2} with $\lambda=0$ and obtain the global classical solvability in the corresponding classes. Therefore, $0\in\Lambda$. The next step demonstrates that $\Lambda$ is open and closed at the same time. To this end, we repeat step-by-step the arguments given in \cite[Section 5.2]{KPV3} and, exploiting Lemma \ref{l6.1} (where $f=F$, $g=G$, $u_0=0$), we complete the proof of Theorem \ref{t4.1}.\qed 

%%%%%%%%%%%%%%%%%%%%%%%%%%%%%%%%%%%%%%%%%%%%%%%%%%%%%%%%%%%%%%%%%%%%%%
 
%%%%%%%%%%%%%%%%%%%%%%%%%%%%%%%%%%%%%%%%%%%%%%%%%%%%%%%%%%%%%%%%%%%%%%

\section{Proof of Theorem \ref{t4.2}}
\label{s8}

\noindent Here we focus on the proof of this theorem in the case of \textbf{DBC} \eqref{1.2}. The case of \textbf{NBC} \eqref{1.2*} is treated with the similar arguments. We will follow the strategy consisting in two steps. In the first, we assume the existence of functions $U_{0,n}$  which approximate the initial data $u_{0}\in \overset{0}{W}\,^{1,2}(\Omega)\cap W^{2,2}(\Omega)$ and satisfy assumptions \textbf{h5 (i)}  and \textbf{h7}. Other words, the function $U_{0,n}\in\C_{0}^{\infty}(\bar{\Omega})$ has the properties:
\begin{equation}\label{8.2}
 U_{0,n}\quad\text{converges to}\,\, u_{0}\quad\text{in}\quad W^{2,2}(\Omega);\quad
\begin{cases}
 U_{0,n}(a)= U_{0,n}(b)=0,\\
a_2(a,0)\frac{\partial^{2}U_{0,n}}{\partial x^{2}}(a)+a_1(a,0)\frac{\partial U_{0,n}}{\partial x}(a)+f(0)=0,\\
a_2(b,0)\frac{\partial^{2}U_{0,n}}{\partial x^{2}}(b)+a_1(b,0)\frac{\partial U_{0,n}}{\partial x}(b)+f(0)=0.
\end{cases}
\end{equation}
We discuss subsequently the technique of the construction of $U_{0,n}$.

Appealing to Theorem \ref{t4.1}, we build smooth solutions $u_{n}=u_n(x,t):\Omega_{T}\to\R$
 to the approximated problems
\begin{equation}\label{8.1}
\mathbf{D}_{t}u_n-\mathcal{L}_1u_n-\mathcal{K}*\mathcal{L}_2u_n+f(u_n)=0\quad \text{in}\quad \Omega_{T},\quad
u_n(x,0)=U_{0,n}\quad\text{in}\quad\bar{\Omega},\quad
u_n(x,t)=0\quad\text{on}\quad \partial\Omega_{T}.
\end{equation}
Then we extract a convergent subsequence and pass to the limit in the equation. Using the Banach-Alaoglu Theorem and estimates stated in
 Lemma \ref{l6.1} (with $\lambda=1,\, g\equiv 0$), we can apply a standard arguments to choose a subsequence of $u_n$ (which we relabel) such that, for any fixed time $T>0$
\begin{align*}
u_n&\rightharpoonup u\quad\text{weakly in}\quad L_{2}((0,T),W^{2,2}(\Omega)),\\
K*u_n&\rightharpoonup K*u\quad\text{weakly in}\quad L_{2}((0,T),W^{2,2}(\Omega)),\\
u_n&\rightarrow u\quad\text{uniformly}\quad \C^{\alpha^*,\frac{\alpha^*\nu}{2}}(\bar{\Omega}_{T})\quad\text{for any}\quad \alpha^*\in(0,1),\\
f(u_n)&\rightarrow f(u) \quad\text{uniformly}\quad \C(\bar{\Omega}_{T}),\\
\mathbf{D}_{t}^{\nu}u_{n}&\rightharpoonup \mathbf{D}_{t}^{\nu} u_{n} \quad\text{weakly in}\quad L_{2}(\Omega_{T}),\\
\mathbf{D}_{t}^{\nu_{i}}u_{n}&\rightharpoonup \mathbf{D}_{t}^{\nu_{i}} u_{n},\, i=1,2,...,M, \quad\text{weakly in}\quad L_{2}(\Omega_{T}),\\
\mathbf{D}_{t}^{\mu_{j}}u_{n}&\rightharpoonup \mathbf{D}_{t}^{\mu_{j}} u_{n},\, j=1,2,...,N, \quad\text{weakly in}\quad L_{2}(\Omega_{T}),
\end{align*}
for some $u$ belonging to all the spaces above.

Therefore, such $u$ satisfies estimate \eqref{6.4} (with $g\equiv 0$) and
\[
\|u\|_{\C^{\alpha*,\frac{\alpha*\nu}{2}}(\bar{\Omega}_{T})}\leq C[1+\|u_0\|_{W^{2,2}(\Omega)}],
\]
besides, $u$ satisfies  equation \eqref{1.1} a.e. in $\Omega_{T}$, along with  \eqref{1.3} and  homogeneous boundary condition \eqref{1.2}. 

As for the uniqueness this solution, this fact is proved with the standard arguments. Namely, assuming the existence of two solutions $u$ and $u^*$ satisfying the same data, we consider homogenous problem \eqref{1.1}, \eqref{1.2} and \eqref{1.3} for the difference $U^*=u-u^*$, and recasting the proof leading to estimate \eqref{6.9*} (see also Section \ref{s6.5} concerning this estimate to whole time interval), we get
\[
\|U^*\|_{\C([0,T],L_{2}(\Omega))}\leq 0\quad  \Rightarrow\quad u\equiv u^*.
\]

In summary, in order to complete the proof of the existence of a unique strong solution to  \eqref{1.1}, \eqref{1.2} and \eqref{1.3} satisfying the regularity required by Theorem \ref{t4.2}, we are left to build the function $U_{0,n}$
 satisfying \eqref{8.2}.
To this end, we introduce new function
\[
U_{0}(x)=u_0(x)-P_{u_{0}}(x),
\]
where $P_{u_{0}}(x)$ is the $5^{\text{th}}$ degrees polynomial satisfying equalities
\[
\begin{cases}
P_{u_{0}}(a)=P_{u_{0}}(b)=0,\\
P'_{u_{0}}(a)=u'_{0}(a),\\
P'_{u_{0}}(b)=u'_{0}(b),\\
P''_{u_{0}}(a)=\frac{1}{a_2(a,0)}[-f(0)-a_{1}(a,0)u'_{0}(a)],\\
P''_{u_{0}}(b)=\frac{1}{a_2(b,0)}[-f(0)-a_{1}(b,0)u'_{0}(b)].
\end{cases}
\]
Here we used the smoothness of $u_0$ and embedding theorem which provides existence (in the classical sense) of $u'_{0}(a)$ and $u'_{0}(b)$.

We easily conclude that $U_0(x)\in\overset{0}{W}\,^{2,2}(\Omega)$ and, hence, we can approximate $U_{0}(x)$ by the functions $\bar{U}_{0,n}\in\C_{0}^{\infty}(\bar{\Omega})$ in the norm of $W^{2,2}(\Omega)$. Then, setting
\[
U_{0,n}(x)=\bar{U}_{0,n}(x)+P_{u_{0}}(x),
\]
and performing standard calculations, we conclude that $U_{0,n}(x)\in\C^{2+\alpha}(\bar{\Omega})$ and satisfies conditions \eqref{8.2}.
This means that $U_{0,n}$ satisfies assumptions of Theorem 4.1 and approximate the initial data $u_0$ in the corresponding classes.
Finally, coming to \eqref{8.1}, we  carry out the procedure described above  and complete the proof of Theorem \ref{t4.2}. \qed.

%%%%%%%%%%%%%%%%%%%%%%%%%%%%%%%%%%%%%%%%%%%%%%%%%%%%%%%%%%%%%%%%%%%%%%
 
%%%%%%%%%%%%%%%%%%%%%%%%%%%%%%%%%%%%%%%%%%%%%%%%%%%%%%%%%%%%%%%%%%%%%%

\section{Numerical Simulations}
\label{s9}

\noindent We finally presents some numerical tests aimed to illustrate our theoretical results (Theorems \ref{t4.1} and \ref{t4.2}) and demonstrate some effects of fractional derivatives which do not have yet analytical proofs. Namely, if all derivatives in the operator $\mathbf{D}_{t}$ (see \eqref{1.4})  are fractional, then we observe rapid changes in the solutions for small time, and  the solutions slow down as time goes up 
(see Example~\ref{e.2}). It differs from the case  $\nu=1$ in \eqref{1.4}, where this effect is negligible (and is diminishing as fractional orders grow, as we observe in Example~\ref{e.2}).

For simplicity, we focus on the initial-boundary value problem in the
one-dimensional domain $\Omega=(0,1)$ (multi-dimensional generalization of the proposed finite-difference scheme is straightforward and boils down to adding new terms that approximate the occurring new spatial partial derivatives in a completely similar manner):
\begin{equation}
\label{9.1_}
\begin{cases}
\mathbf{D}_{t}^{\nu}(\varrho_0 u)+\mathbf{D}_{t}^{\nu_1}(\varrho_1 u)-\mathbf{D}_{t}^{\mu_1}(\gamma_1 u) -\mathfrak{a}\frac{\partial^{2} u}{\partial x^{2}}+\mathfrak{d}
\frac{\partial u}{\partial x}\ -(\mathcal{K}* b\frac{\partial^{2} u}{\partial x^{2}}) = f(x,t,u)+g(x,t)\quad\text{in }\Omega_T,\\
\noalign{\vskip.3mm}
u(x,0)=u_0(x),\qquad\qquad\qquad\quad x\in[0, 1],\\
\noalign{\vskip2mm}
\mathfrak{c}_{1}\frac{\partial u}{\partial x}(0,t)+\mathfrak{c}_{2}u(0,t)=\varphi_{1}(t), \quad t\in[0, T],\\
\noalign{\vskip2mm}
\mathfrak{c}_{3}\frac{\partial u}{\partial x}(1,t)+\mathfrak{c}_{4}u(1,t)=\varphi_{2}(t), \quad t\in[0, T].
\end{cases}
\end{equation}
We introduce the space-time mesh with nodes
$$x_{k}=kh,\quad \sigma_{j}=j\sigma,\quad k=0,1,\ldots, K,
\quad j=0,1,\ldots,  J, \quad h = L/ K, \quad \sigma = T/J.$$
For these examples, we actually take $L=1$. Denoting the finite-difference approximation of the solution $u$ at the point $(x_k, \sigma_j)$ by $u^j_k$ and calling
\begin{gather*}
\mathfrak{a}^{j+1}_k = \mathfrak{a}(x_k, \sigma_{j+1}),\qquad   \mathfrak{d}^{j+1}_k  =   \mathfrak{d}(x_k, \sigma_{j+1}),\qquad
b^j_k = b(x_k, \sigma_j),\\
 \mathcal{K}_{m,j} = \int_{\sigma_m}^{\sigma_{m+1}}{\mathcal{K}}(\sigma_{j+1}-s) d s,\qquad
 \rho_m = (-1)^m\binom{\nu}{m},\qquad
 \Tilde\rho_m  = (-1)^m\binom{\nu_1}{m}, \qquad \Bar\rho_m  = (-1)^m\binom{\mu_1}{m}, \\
 \varrho_{0,k}^{j+1} = \varrho_0(x_k,\sigma_{j+1}),\qquad \varrho^{j+1}_{1,k} = \varrho_{1}(x_k, \sigma_{j+1}), \qquad \gamma^{j+1}_{1,k} = \gamma_{1}(x_k, \sigma_{j+1}),
\end{gather*}
we approximate the differential equation in \eqref{9.1_} at each time level $\sigma_{j+1}$ and spatial point $x_k$, so to obtain the
finite-difference scheme
\begin{align*}
&  \sigma^{-\nu} \sum\limits_{m=0}^{j+1} (\varrho_{0,k}^{j+1-m} u^{j+1-m}_k - \varrho_{0,k}^0 u_0(x_k))\rho_m + \sigma^{-\nu_1} \sum\limits_{m=0}^{j+1} (\varrho_{1,k}^{j+1-m} u^{j+1-m}_k - \varrho_{1,k}^{0} u_0(x_k))\Tilde\rho_m \\
& -\sigma^{-\mu_1} \sum\limits_{m=0}^{j+1} (\gamma_{1,k}^{j+1-m} u^{j+1-m}_k - \gamma_{1,k}^{0} u_0(x_k))\Bar\rho_m   - \frac{\mathfrak{a}^{j+1}_k}{h^2}(u^{j+1}_{k-1}-2u^{j+1}_{k}+u^{j+1}_{k+1})
+ \frac{  \mathfrak{d}^{j+1}_k}{2 h}(u^{j+1}_{k+1}-u^{j+1}_{k-1})\\
& = \sum_{m=0}^{j}\left(b^m_k \frac{u^{m}_{k-1}-2 u^{m}_{k}+u^{m}_{k+1}}{h^2}
+ b^{m+1}_k \frac{u^{m+1}_{k-1}-2 u^{m+1}_{k}+u^{m+1}_{k+1}}{h^2}\right)\!\frac{\mathcal{K}_{m,j}}{2}
+f(x_k,\sigma_{j},u_k^j) +g(x_k,\sigma_{j+1}),
\end{align*}
for
$$k = 1,\ldots,  K-1\qquad\text{and}\qquad j = 0,1,\ldots,  J-1.$$
Here, the derivatives $u_x$ and $u_{x x}$ are approximated by
the standard central finite-difference formulas, the trapezoid rule is employed to approximate the integrals in the sum 
$$\sum_{m=0}^j\int^{\sigma_{m+1}}_{\sigma_m} {\mathcal{K}}(\sigma_{j+1}-s)b(x,s)u_{xx}(x,s) d s$$
 representing the convolution term in \eqref{9.1_}, and the Gr\"unwald-Letnikov (GL) formula \cite{KST, DFFL, JinLazZ} is applied to approximate the fractional derivatives $\mathbf{D}_{t}^{\nu}(\varrho_0 u)$,  $\mathbf{D}_{t}^{\nu_{1}}(\varrho_1 u)$ and $\mathbf{D}_{t}^{\mu_1}(\gamma_1 u)$. Alternatively, here one might also use the ``Leibniz rule'' for Caputo derivatives (see~\cite[Corollary~3.1]{KPSV4}) to reduce the treating of fractional derivatives terms in \eqref{9.1_} to the approximation of fractional derivatives of the segregated function $u$ only (but not its products with $\varrho_i$ or $\gamma_i$) by the cost of adding integral extra terms (which can be approximated using the same approach as it was done for the convolution term in \eqref{9.1_}) and  making slight changes in the right part~$g$. However we will not pursue this way further here. Also, in order to achieve an improvement in the temporal discretization accuracy (primarily, owing to the approximation of the fractional derivatives) we apply the Richardson extrapolation (see~\cite{DFFL}). Finally, two fictitious mesh points outside the spatial domain to approximate the derivatives in the boundary conditions with the second order of accuracy are exploited~\cite{PSV6,KPSV}. Further improvement in the accuracy of calculations may be reached by resorting to finite element methods~\cite{JinLazZ, SVjcp}, albeit we do not have the possibility to pursue this direction further here.
\begin{exAPP}\label{e.1}
%\begin{example}\label{e.1}
Consider problem \eqref{9.1_} with  $T=1$ and
\begin{align*}
\mathcal{K}(t)  &= t^{-1/3}, \quad  \mathfrak{a}(x,t) = \cos(\pi x / 4) +t,\quad
  \mathfrak{d}(x,t) = x+t,\quad  b(x,t) = t^{1/3}+\sin(\pi x),\quad u_{0}(x) = \cos(\pi x),\\
  \varrho_0(t) &= 1+t,\quad \varrho_1=1/2,\quad \gamma_1(t)=(1+t^2)/2, \quad \mathfrak{c}_{1} = \mathfrak{c}_{3}=1,\quad \mathfrak{c}_{2}=\mathfrak{c}_{4}=0,\quad
 \varphi_{1}(t)=\varphi_{2}(t)=0, \\
  g(x,t)&= \pi^2 \Bigl(\cos\frac{\pi x}{4} + t+ \frac{3t^{2/3}\sin(\pi x) }{2} + \frac{t \pi }{3\sin(\pi/3)}\Bigr)\cos(\pi x) - x t \sin((\cos(\pi x)+t^{\nu}/\Gamma(1+\nu))^2) \\
  & - (x+t)\pi\sin(\pi x) + 1 + \frac{t^{1-\nu}\cos(\pi x)}{\Gamma(2-\nu)} + (1+\nu) t + \frac{t^{\nu-\nu_1}}{2\Gamma(1+\nu-\nu_1)} - \frac{1}{2}\Bigl( \frac{t^{\nu-\mu_1}}{\Gamma(1+\nu-\mu_1)} \\ 
  & +\frac{2 t^{2-\mu_1} \cos(\pi x)}{\Gamma(3-\mu_1)} + \frac{(2+\nu)(1+\nu) t^{2+\nu-\mu_1}}{\Gamma(3+\nu-\mu_1)} \Bigr), \quad f(x,t,u) = x t \sin(u^2).
\end{align*}
It is easy to verify that the function $$u(x,t)=\cos(\pi x)+\frac{t^{\nu}}{\Gamma(1+\nu)}$$ solves initial-boundary value problem \eqref{9.1_} with the parameters specified above. 

The outcomes of this example (the absolute error $\gimel = \max|u-u_{\mathsf{N}}|$ between $u$ and the numerical solution $u_{\mathsf{N}}$, where the maximum is taken over all the grid points in the space-time mesh) are listed in Table~\ref{Example1:table1}. One can observe from Table~\ref{Example1:table1} the rapid decaying of errors as mesh refines and that relatively coarse meshes provide quite small errors (as compared to the exact solution magnitude); one can also observe that decreasing fractional orders leads to a decrease in the computation accuracy, which is in line with the asymptotic estimates obtained in \cite{CHS_} for the accuracy of GL approximations (asserting that, in general, their accuracy degrades with decreasing a fractional order in case of weakly singular solutions and close to the singularity points, see \cite[Theorems 3.1, 4.1]{CHS_} for details).
\begin{table}[htbp]
  \begin{center}
    \caption{Values of $\gimel$ in Example~\ref{e.1}; $\nu_1=\nu/3$, $\mu_1=\nu/2$.}
    \label{Example1:table1}
    \begin{tabular}{c|c|c|c}
      \hline
     $\nu$ & \multicolumn{3}{c}{$\gimel$}\\ % $\gimel$&$\gimel$&$\gimel$\\
		 \hline
		$0.1$& 2.4443e-02 & 9.0291e-03 & 5.4896e-03 \\
		$0.2$& 2.4006e-02 & 7.8603e-03 & 5.0437e-03 \\
		$0.3$& 2.3429e-02 & 7.2815e-03 & 4.9720e-03 \\
		$0.4$& 2.2749e-02 & 6.4123e-03 & 4.4865e-03 \\
		$0.5$& 2.1996e-02 & 5.6072e-03 & 3.5239e-03 \\
		$0.6$& 2.1195e-02 & 5.4235e-03 & 2.4460e-03 \\
		$0.7$& 2.0369e-02 & 5.2311e-03 & 2.3659e-03 \\
		$0.8$& 1.9538e-02 & 5.0338e-03 & 2.2819e-03 \\
		$0.9$& 1.8722e-02 & 4.8379e-03 & 2.1973e-03 \\ \hline
		     & $K=J=10$ & $K=J=20$ & $K=J=30$
	\end{tabular}
  \end{center}
\end{table}
\end{exAPP}
%\end{example}

\begin{exAPP}\label{e.2}
%\begin{example}\label{e.2}
Consider problem~\eqref{9.1_} with the constant coefficients $\varrho_0 = 1$, $\varrho_1=1/2$, $\gamma_1 = 1/2$, $g=0$, while the remaining coefficients being as in Example~\ref{e.1}. As for the function $f$, we test here two options:
\begin{itemize}
\item[(i)] $f(x,t,u) = 0$ (linear problem),
\item[(ii)]$f(x,t,u) = x t \cos(u^2)$.
\end{itemize}
Solutions to this example are drawn in Figures \ref{fig:e2:linear} and \ref{fig:e2:nonlinear} for different fractional orders $\nu$ (with $\nu_1=\nu/3$ and $\mu_1=\nu/2$) and time points $t$; the steps $\sigma=h=10^{-3}$ are employed. One can observe in these figures that adding the non-linearity can noticeably change the solution behavior (especially for low values of fractional orders).
\begin{figure}
\includegraphics[scale=0.49]{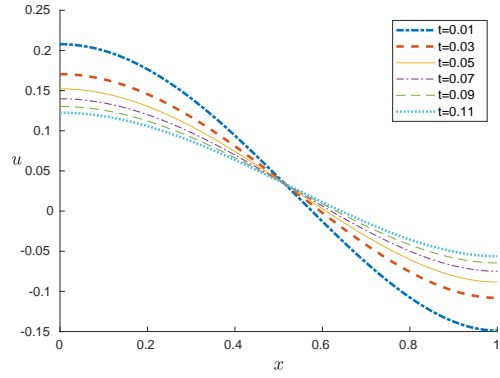}
\centerline{\text{(a)} $\nu=0.2$}
\label{lin_u_alpha=0.2}
\includegraphics[scale=0.49]{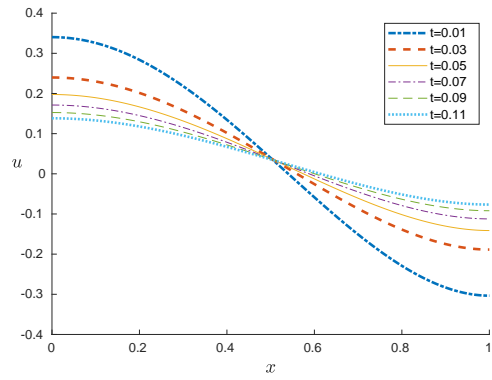}
\centerline{\text{(b)} $\nu=0.4$}
\label{lin_u_alpha=0.4}
\includegraphics[scale=0.49]{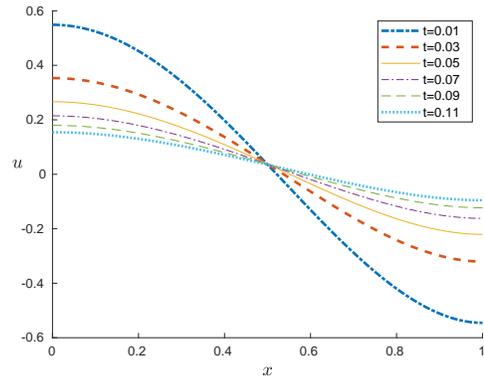}
\centerline{\text{(c)} $\nu=0.6$}
\label{lin_u_alpha=0.6}
\includegraphics[scale=0.49]{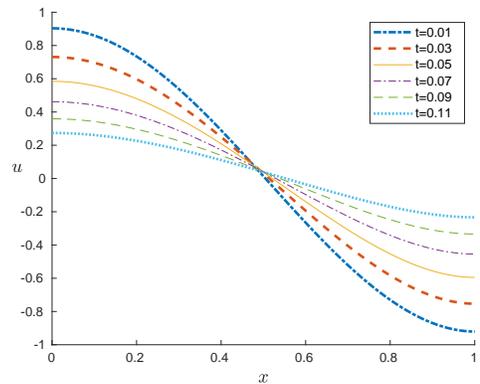}
\centerline{\text{(d)} $\nu=0.8$}
\label{lin_u_alpha=0.8}
\caption{Solutions to Example~\ref{e.2} with $f(x,t,u) = 0$, $\nu_1=\nu/3$, $\mu_1=\nu/2$.}
    \label{fig:e2:linear}
\end{figure}
\begin{figure}
\includegraphics[scale=0.484]{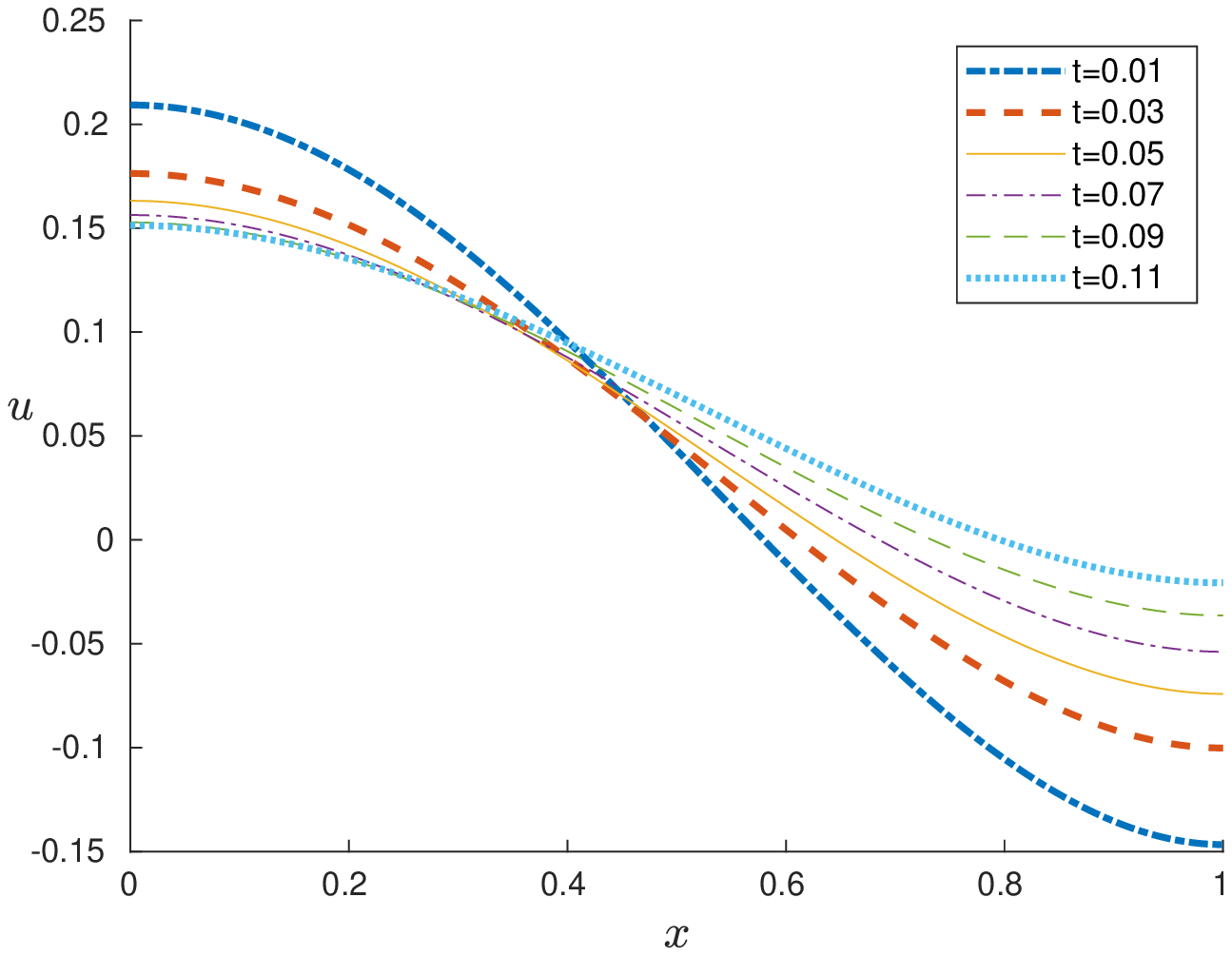}
\centerline{\text{(a)} $\nu=0.2$}
\label{nonlin_u_alpha=0.2}
\includegraphics[scale=0.484]{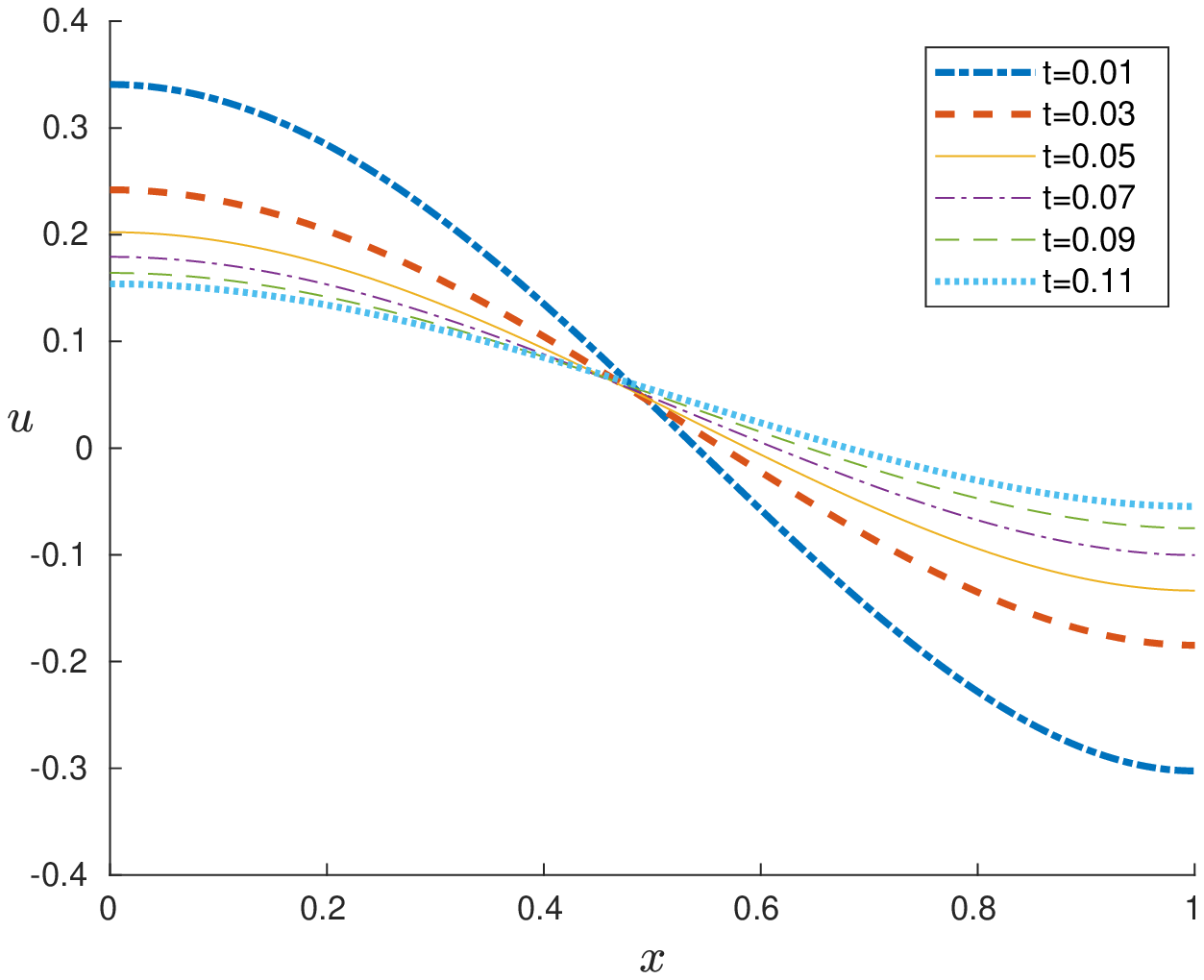}
\centerline{\text{(b)} $\nu=0.4$}
\label{nonlin_u_alpha=0.4}
\includegraphics[scale=0.484]{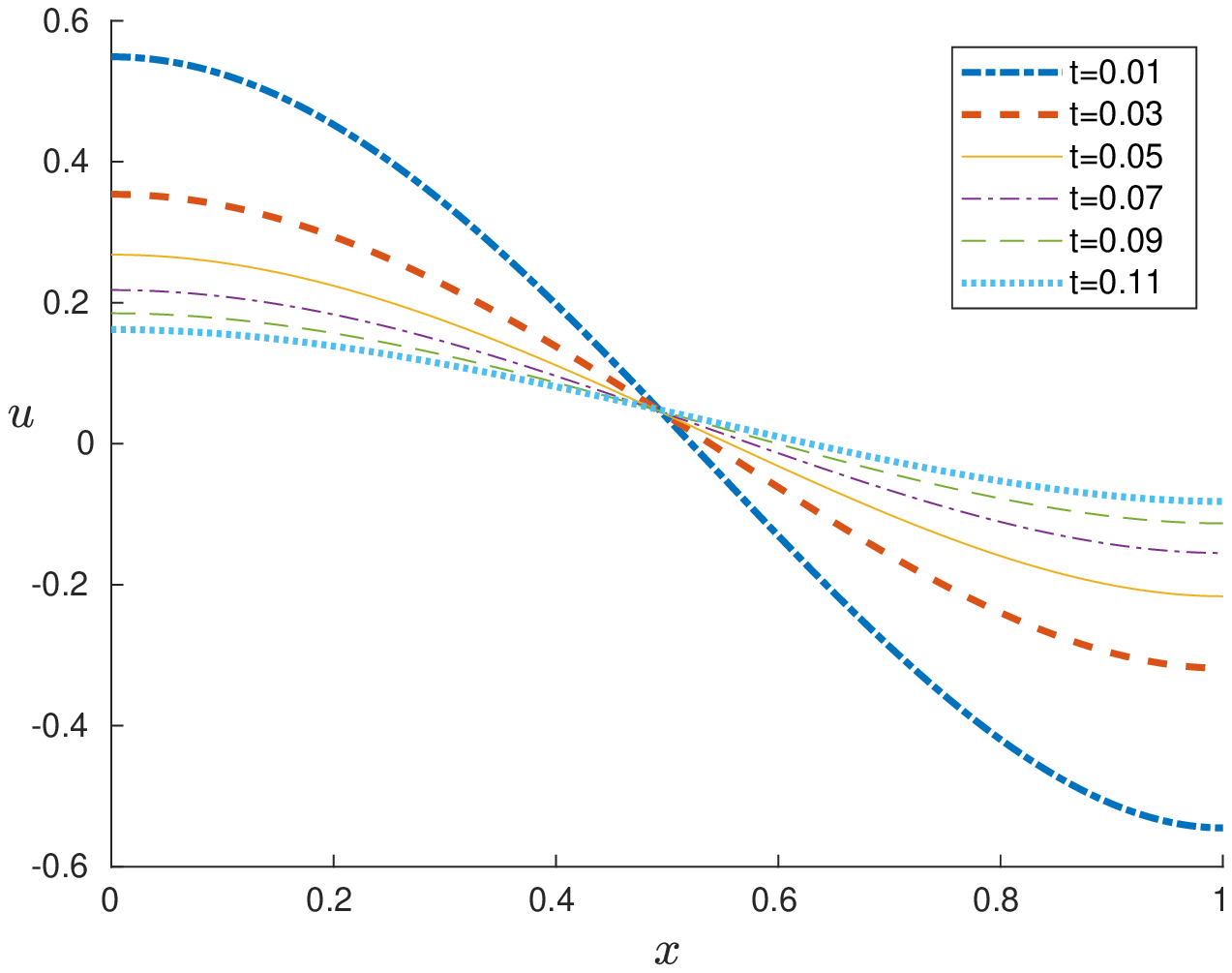}
\centerline{\text{(c)} $\nu=0.6$}
\label{nonlin_u_alpha=0.6}
\includegraphics[scale=0.484]{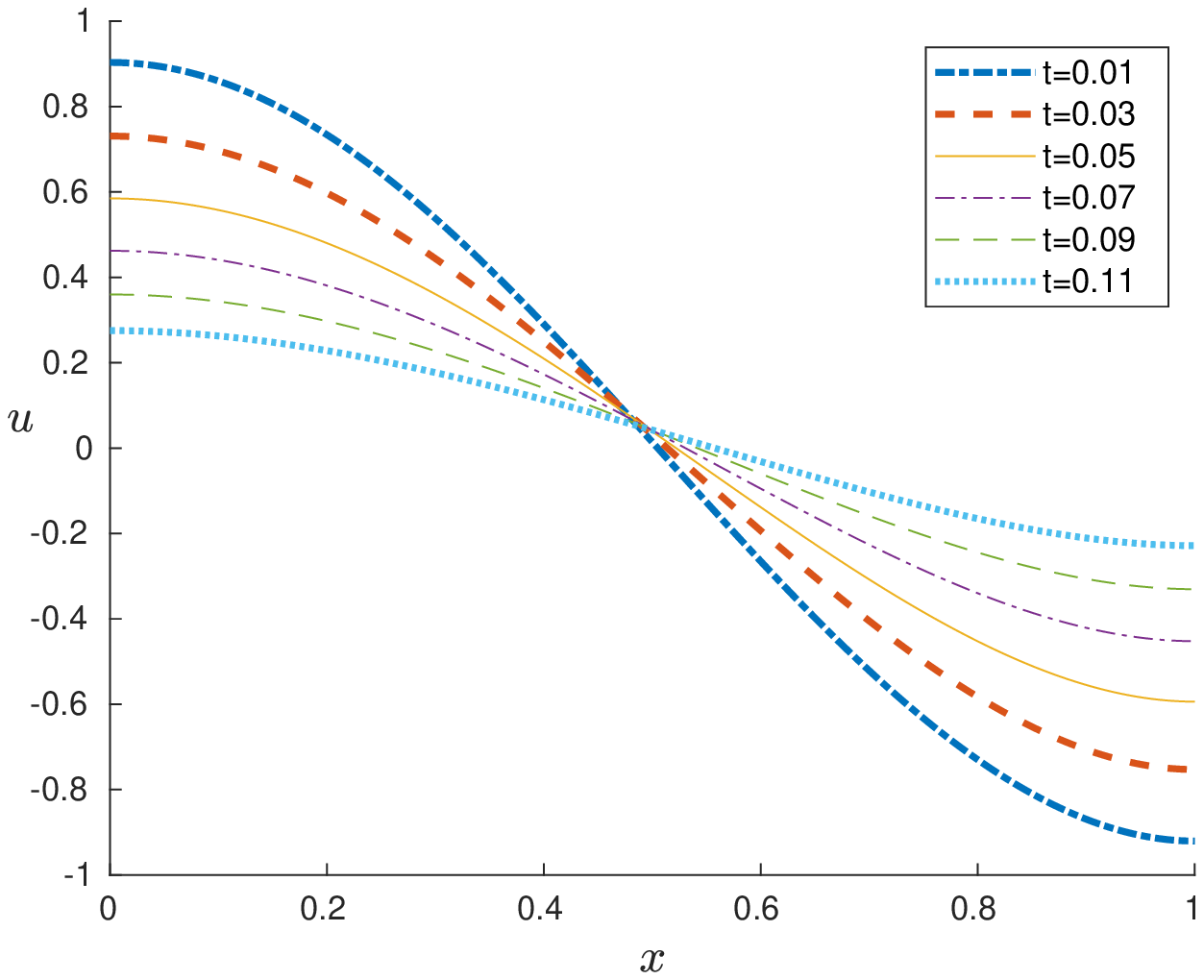}
\centerline{\text{(d)} $\nu=0.8$}
\label{nonlin_u_alpha=0.8}
\caption{Solutions to Example~\ref{e.2} with $f(x,t,u) = x t \cos(u^2)$, $\nu_1=\nu/3$, $\mu_1=\nu/2$.}
    \label{fig:e2:nonlinear}
\end{figure}
\end{exAPP}
%\end{example}

%%%%%%%%%%%%%%%%%%%%%%%%%%%%%%%%%%%%%%%%%%%%%%%%%%%%%%%%%%%%%%%%%%%%%%

%%%%%%%%%%%%%%%%%%%%%%%%%%%%%%%%%%%%%%%%%%%%%%%%%%%%%%%%%%%%%%%%%%%%%%
\subsection*{Acknowledgments} The second author was partially  supported by the Foundation of  The European Federation of Academy of Sciences and Humanities (ALLEA),
the Grant EFDS-FL2-08.
%%%%%%%%%%%%%%%%%%%%%%%%%%%%%%%%%%%%%%%%%%%%%%%%%%%%%%%%%%%%%%%%%%%%%%

%%%%%%%%%%%%%%%%%%%%%%%%%%%%%%%%%%%%%%%%%%%%%%%%%%%%%%%%%%%%%%%%%%%%%
 
%%%%%%%%%%%%%%%%%%%%%%%%%%%%%%%%%%%%%%%%%%%%%%%%%%%%%%%%%%%%%%%%%%%%%%


\begin{thebibliography}{60}

\bibitem{AF}
R.A. Adams, J.J.F. Fournier, Sobolev spaces, v. 140, 2nd edition, Academic Press, New York,
2003.

\bibitem{BT}
R.L. Bagley, P. Torvik, A theoretical basis for the application of fractional calculus to viscoelasticity, J. Rheol., \textbf{27} (1983) 201--210.

\bibitem{BG}
J.-P. Bouchaud, A. Georges, Anomalous diffusion in disordered media: statistical mechanisms, models and physical applications, Phys. Rep., \textbf{195} (1990) 127--293.

\bibitem{CDV}
A. Carbotti, S. Dipierro, E. Valdinoci, Local density of solutions to fractional equations, De Gruyter Studies in Mathematics, 2019.


\bibitem{CHS_}
H. Chen, F. Holland, M. Stynes, An analysis of the Gr\"{u}nwald-Letnikov scheme for initial-value problems with weakly singular solutions, Appl. Numeric. Math., {\bf 139} (2019), 52--61.

\bibitem{CLA}
J. Chen, F. Liu, V. Anh, Analytical solution for the time-fractional telegraph equation by the method of separating variables, J. Math. Anal. Appl., \textbf{338} (2) (2008) 1364-1377.



\bibitem{DB}
V. Daftardar-Gejji, S. Bhalekar, Boundary value problems for multi-term fractional differential equations, J. Math. Anal. Appl., \textbf{345} (2008) 754--765.

\bibitem{DFFL}
K. Diethelm, N.J. Ford, A.D. Freed, Yu. Luchko,  Algorithms for the fractional calculus: A selection of numerical methods, Comput. Methods Appl. Mech. Engrg., \textbf{194} (2005) 743--773, DOI: 10.1016/j.cma.2004.06.006.


\bibitem{Fo}
J.J.F. Fournier, Sharpness in Young's inequality for convolution, Pacific J. Math., \textbf{72}(2) (1977) 383--397.

\bibitem{GPM}
C. Giorgi, V. Pata, A. Marzocchi, Asymptotic behavior of a semilinear problem in heat conduction wityh memory, NoDEA, Nonlinear Differ. Equa. Appl., \textbf{5} (1998) 333--354. 

\bibitem{GKMR}
R. Gorenflo, A. Kilbas, F. Mainardi, S.Rogosin, Mittag-Leffler functions. Related Topics ad Applications, Springer, Berlin-Heidelberg, 2020. 

\bibitem{Gr}
P. Grisvard, Elliptic problems in nonsmooth domain, Pitman, Boston, 1985.

\bibitem{JK}
J. Janno, N. Kinash, Reconstruction of an order of derivative and a source term in a fractional diffusion equation from final measurements, 
Inverse Problems, \textbf{34} (2018) 02507.

\bibitem{JLTB}
H. Jiang, F. Liu, I. Turner, K. Burrage, Analytical solutions for the multi-term time fractional diffusion-wave/ diffusion equations in a finite domain, Cmput. Math. Appl., \textbf{64}(10) (2012) 3377--3388.

\bibitem{JinLazZ}
B. Jin, R. Lazarov, Z. Zhou, Numerical methods for time-fractional evolution equations with nonsmooth data: A concise overview, Comput. Methods Appl. Mech. Engrg., \textbf{346} (2019) 332--358.

\bibitem{HL}
S.B. Hadid, Y. Luchko, An operational method for solving fractional differential equations of an arbitrary real order, Panam. Math. J., \textbf{6} (1996) 57-73. 

\bibitem{KST}
A.A. Kilbas, H.M. Srivastava, J.J. Trujillo,  Theory and
applications of fractional differential equations, North-Holland
Mathematics Studies, 204, Elsevier Science B.V., Amsterdam, 2006.

\bibitem{KSS}
A.A. Kilbas, M. Saigo, R.K. Saxena, Generalized Mittag-Leffler function and generalized fractional calculus operator, Integral Transforms Spec. Funct., \textbf{15} (2004) 31--49.

\bibitem{KPV1}
M. Krasnoschok, V. Pata, N. Vasylyeva, Solvability of
linear boundary value problems for subdiffusion equation with
memory,  J. Integral Equations  Appl., \textbf{30} (3) (2018) 417--445.

\bibitem{KPV2}
M. Krasnoschok, V. Pata, N. Vasylyeva, Semilinear subdiffusion with memory in the one-dimensional case, Nonlinear Analysis, \textbf{165} (2017) 1--17. 

\bibitem{KPV3}
M. Krasnoschok, V. Pata, N. Vasylyeva, Semilinear subdiffusion with memory in multidimensional domains,  Mathematische Nachrichten, \textbf{292} (7) (2019) 1490--1513.

\bibitem{KPSV4}
M. Krasnoschok, V. Pata, S.V. Siryk, N. Vasylyeva, Equivalent definitions of Caputo derivatives and applications to subdiffusion equations, Dynamics of PDE, \textbf{17} (4) (2020) 383--402.

\bibitem{KPSV}
M. Krasnoschok, S. Pereverzyev, S.V. Siryk, N. Vasylyeva, Regularized reconstruction of the order in semilinear subdiffusion with memory, (In: Cheng J., Lu S., Yamamoto M. (Eds.) Inverse Problems and Related Topics ICIP2 2018), Springer Proceedings in Mathematics$\&$Statistics, \textbf{310} (2020) 205--236, doi:10.1007/978-981-15-1592-7-10.

\bibitem{KRY}
A. Kubica, K. Ryszewska, M. Yamamoto, Time-fractional differential equations a theoretical introduction, Springer Briefs in Mathematics, Springer Nature, Singapore Pte Ltd. 2020.

\bibitem{LSU}
O.A. Ladyzhenskaia, V.A. Solonnikov, N.N. Ural'tseva,
Linear and quasilinear parabolic equations, Academic
Press, New York, 1968.

\bibitem{LU}
O.A. Ladyzhenskaia, N.N. Ural'tseva,
Linear and quasilinear elliptic equations, Academic
Press, New York, 1973.

\bibitem{LMMZL}
C.-G. Li, M. Kosti\'{c}, M. Li, S. Piskarev, On a class of time-fractional differential equations, Fract. Calc. Appl. Anal., \textbf{15}(4) (2012) 639--668.

\bibitem{LHY}
Z. Li, X. Huang, M. Yamamoto, Initial-boundary value problems for multi-term time-fractional diffusion equations with $x-$dependent coefficients, Evol. Equa. Control. Theory, \textbf{9} (2020) 153--179.

\bibitem{LLY}
Z.Liu, Y. Liu, M. Yamamoto, Initial-boundary value problems for multi-term time-fractional diffusion equations with positive constant coefficients, Appl. Math. Comp., \textbf{257} (2015) 381--397.

\bibitem{LT}
X. Li, X. Tian, Fractional order thermo-viscoelastic theory of biological tissues with dual phase log heat conduction model, Applied Math. Modell., \textbf{95} (2021) 612--622.


\bibitem{LY}
 Y. Liu, M. Yamamoto, Uniqueness of orders and parameters in multi-term time-fractional diffusion equations by exact date, arXiv:2206.02108v1 (2022).

\bibitem{L1}
Yu. Luchko, Initial-boundary value problems for the generalized multi-term time-fractional diffusion equation, J. Math. Anal. Appl., \textbf{374}(2) (2011) 538--548.

\bibitem{LSY}
Yu. Luchko, A. Suzuki, M. Yamamoto, On the maximum principle for the multi-term fractional transport equation, J. Math. Anal. Appl., \textbf{505} (2022) 125579.


\bibitem{MGSKA}
V.F. Marales-Delgado, J.F. G\'{o}mez-Aguilar, K.M. Saad, M.A. Khan, P. Agarwal, Analytical solution for oxygen diffusion from capillary to tissues involving external force effects: A fractional calculus approach, Physica A, \textbf{523} (2019) 48--65.

\bibitem{MS}
M.M. Meerschaert, A. Sikorskii, Stochastic models for fractional calculus, De Gruter Studies in Mathematics, \textbf{43} Walter de Gruyter$\&$Co, Berlin, 2012.

\bibitem{MMVBBDK}
M. Mulinari, P.Muchado, A. Voda, G. Besanson, G. Becq, O. David, P. Kahne, Electrode-brain interface fractional order modelling for brain tissue classification in SEEG, Biomedical Signal Proceed. Control, \textbf{79} (2023) 104050.

\bibitem{PSV6}
V. Pata, S.V. Siryk, N. Vasylyeva, Multi-term fractional linear equation modeling oxygen subdiffusion through capillaries,
 https://doi.org/10.48550/arXiv.2210.05009, (2022), 30 p.

\bibitem{Pi}
A.C. Pipkin, Lectures on viscoelasticity theory, Second ed. in: Applied Math. Sciences, vol.7, Springer, New York, Berlin, Heidelberg, Tokyo, 1986.

\bibitem{SKM}
S.G. Samko, A.A. Kilbas, O.I. Marichev, Fractional integrals and derivatives: theory and applications, Gordon and Breach: Yverdon, 1993.

\bibitem{SVjcp}
S.V. Siryk, A note on the application of the Guermond-Pasquetti mass lumping correction technique for convection-diffusion problems, J. Comput.  Phys., \textbf{376} (2019) 1273--1291, DOI: 10.1016/j.jcp.2018.10.016.

\bibitem{JDM}
A. Soltani Joujehi, M.H. Derakhshan, H.R. Marasi, An efficient hybrid numerical method for multi-term time fractional partial differential equations in fluid mechanics with convergence and error analysis, Communications Nonlinear Sci. Numerical Simulation, \textbf{114} (2022) 106620.

\bibitem{SR}
V. Srivastava, K.N. Rai, A multi-term fractional diffusion equation for oxygen delivery through a capillary to tissues, Math. Comput. Modelling, \textbf{51} (2010) 616--624.

\bibitem{Y}
M. Yamamoto, Fractional derivatives and time-fractional ordinary differentials equations in $L_p-$space, 
https://doi.org/10.48550/arXiv.2201.07094, (2022), 40 p.

\bibitem{Y2}
M. Yamamoto, Fractional calculus and time-fractional differential equations: revisit and construction of a theory, Mathematics, \textbf{10} (2022) 698.

\bibitem{ZLLA}
J. Zhang, F. Liu, Z. Lin, V. Anh, Analytical and numerical solutions of a multi-term time-fractional Burgers fluid model, Appl. Math. Comput., \textbf{356} (2019) 1--22.








\end{thebibliography}
\end{document}